\newtheorem{theorem}{Theorem}[section]
\newtheorem{corollary}[theorem]{Corollary}
\theoremstyle{definition}
\newtheorem{definition}[theorem]{Definition}
\newtheorem{convention}[theorem]{Convention}
\theoremstyle{remark}
\newtheorem{remark}[theorem]{Remark}
\numberwithin{equation}{section}
\def\loc{\operatorname{loc}}
\def\esup{\operatornamewithlimits{ess\,sup}}
\def\R{\mathbb R}
\def\Z{\mathbb Z}
\def\ap{\approx}
\def\qq{\qquad}
\def\rn{\R^n}
\def\b{q}
\def\up{\upsilon}
\def\la{\lambda}
\def\vp{\varphi}
\def\s{\sigma}
\def\i{\infty}
\def\I{(0,\i)}
\def\rw{\rightarrow}
\def\up{\uparrow}
\def\dn{\downarrow}
\def\ls{\lesssim}
\def\R{\mathbb R}
\def\M{\mathfrak M}
\def\mp{{\mathfrak M}}
\def\W{{\mathcal W}}
\begin{document}

\setcounter{page}{1}

\title[Iterated Hardy-type inequalities involving suprema]{Iterated Hardy-type inequalities involving suprema}

\author[A. Gogatishvili]{Amiran Gogatishvili}
\address{Institute of Mathematics \\
Academy of Sciences of the Czech Republic \\
\v Zitn\'a~25 \\
115~67 Praha~1, Czech Republic} \email{gogatish@math.cas.cz}

\author[R.Ch.Mustafayev]{Rza Mustafayev}
\address{Department of Mathematics \\ Faculty of Science and Arts \\ Kirikkale
University \\ 71450 Yahsihan, Kirikkale, Turkey}
\email{rzamustafayev@gmail.com}

\thanks{The research of A. Gogatishvili was partly supported by the grants P201-13-14743S
of the Grant Agency of the Czech Republic and RVO: 67985840, by
Shota Rustaveli National Science Foundation grants no. 31/48
(Operators in some function spaces and their applications in Fourier
Analysis) and no. DI/9/5-100/13 (Function spaces, weighted
inequalities for integral operators and problems of summability of
Fourier series). The research of both authors was partly supported
by the joint project between  Academy of Sciences of Czech Republic
and The Scientific and Technological Research Council of Turkey}

\subjclass[2010]{Primary 26D10; Secondary 26D15.}

\keywords{quasilinear operators, iterated Hardy inequalities,
weights.}

\begin{abstract}
In this paper the complete solution of the restricted inequalities
for supremal operators are given. The boundedness of the composition
of supremal operators with the Hardy and Copson operators in
weighted Lebesgue spaces are characterized.
\end{abstract}

\maketitle


\section{Introduction}\label{in}

Throughout the paper we assume that $I : = (a,b)\subseteq (0,\i)$.
By $\mp (I)$ we denote the set of all measurable functions on $I$.
The symbol $\mp^+ (I)$ stands for the collection of all $f\in\mp
(I)$ which are non-negative on $I$, while $\mp^+ (I;\dn)$ and $\mp^+
(I;\up)$ are used to denote the subset of those functions which are
non-increasing and non-decreasing on $I$, respectively. When $I = \I$, we write simply $\mp^{\dn}$ and $\mp^{\up}$ instead of $\mp^+ (I;\dn)$ and $\mp^+
(I;\up)$, accordingly. The family of all weight functions (also called just weights) on $I$, that is,
locally integrable non-negative functions on $\I$, is given by $\W(I)$.

For $p\in (0,\i]$ and $w\in \mp^+(I)$, we define the functional
$\|\cdot\|_{p,w,I}$ on $\mp (I)$ by
\begin{equation*}
\|f\|_{p,w,I} : = \left\{\begin{array}{cl}
\left(\int_I |f(x)|^p w(x)\,dx \right)^{1/p} & \qq\mbox{if}\qq p<\i \\
\esup_{I} |f(x)|w(x) & \qq\mbox{if}\qq p=\i.
\end{array}
\right.
\end{equation*}

If, in addition, $w\in \W(I)$, then the weighted Lebesgue space
$L^p(w,I)$ is given by
\begin{equation*}
L^p(w,I) = \{f\in \mp (I):\,\, \|f\|_{p,w,I} < \i\}
\end{equation*}
and it is equipped with the quasi-norm $\|\cdot\|_{p,w,I}$.

When $w\equiv 1$ on $I$, we write simply $L^p(I)$ and
$\|\cdot\|_{p,I}$ instead of $L^p(w,I)$ and $\|\cdot\|_{p,w,I}$,
respectively.

Given a operator $T:\mp^+ \rw \mp^+$, for $0 < p <\i$ and
$u\in\mp^+$, denote by
$$
T_u (g) : = T (g u), \qq g \in \mp^+.
$$

Suppose $f$ be a measurable a.e. finite function on ${\mathbb R}^n$.
Then its non-increasing rearrangement $f^*$ is given by
$$
f^* (t) = \inf \{\lambda > 0: |\{x \in {\mathbb R}^n:\, |f(x)| >
\lambda \}| \le t\}, \quad t \in (0,\infty),
$$
and let $f^{**}$ denotes the Hardy-Littlewood maximal function of
$f$, i.e.
$$
f^{**}(t) : = \frac{1}{t} \int_0^t f^* (\tau)\,d\tau, \quad t > 0.
$$
Quite many familiar function spaces can be defined using the
non-increasing rearrangement of a function. One of the most
important classes of such spaces are the so-called classical Lorentz
spaces.

Let $p \in (0,\infty)$ and $w \in {\mathcal W}$. Then the classical
Lorentz spaces $\Lambda^p (w)$ and $\Gamma^p (w)$ consist of all
functions $f \in {\mathfrak M}$ for which  $\|f\|_{\Lambda^p(w)} <
\infty$ and $\|f\|_{\Gamma^p(w)} < \infty$, respectively. Here it is
$$
\|f\|_{\Lambda^p(w)} : = \|f^*\|_{p,w,(0,\infty)} \qquad \mbox{and}
\qquad \|f\|_{\Gamma^p(w)} : = \|f^{**}\|_{p,w,(0,\infty)}.
$$
For more information about the Lorentz $\Lambda$ and $\Gamma$ see
e.g. \cite{cpss} and the references therein.

The Hardy and Copson operators are defined by
$$
H g (t) : = \int_0^t g(s)\,ds, \qq g \in \mp^+,
$$
and
$$
H^* g (t) : = \int_t^{\i} g(s)\,ds, \qq g \in \mp^+,
$$
respectively. The operators  $H$ and $H^*$ play a prominent role. There are other operators that are also of interest. For example, certain
specific problems such as the description of the behaviour of the
fractional maximal operator on classical Lorentz spaces \cite{ckop}
or the optimal pairing problem for Sobolev imbeddings  \cite{kerp}
or various questions arising in the interpolation theory can be
handles in an elegant way with the help of the supremal operators

$$
S g (t) : = \esup_{0 < \tau \le t} g(\tau), \qq g \in \mp^+,
$$
and
$$
S^* g (t) : = \esup_{t \le \tau < \infty} g(\tau), \qq g \in \mp^+.
$$

In this paper, we give complete characterization of restricted inequalities
\begin{equation}\label{supr.ineq.1}
\| S_u (f)\|_{q,w,\I} \le c\|f\|_{p,v,\I},~ f \in \M^{\dn},
\end{equation}
\begin{equation}\label{supr.ineq.3}
\| S_u (f) \|_{q,w,(0,\i)} \le c \| f \|_{p,v,(0.\i)}, \, f \in \M^{\up}
\end{equation}
and
\begin{equation}\label{supr.ineq.2}
\| S_u^* (f)\|_{q,w,\I} \le c\|f\|_{p,v,\I},~ f \in \M^{\up},
\end{equation}
\begin{equation}\label{supr.ineq.4}
\| S_u^* (f)\|_{q,w,\I} \le c\|f\|_{p,v,\I},~ f \in \M^{\dn}.
\end{equation}
Note that inequality \eqref{supr.ineq.1} was characterized in \cite{gop}. It should be mentioned here that it was done under some additional condition on weight function $u$, when $q < p$ (cf. \cite[Theorem 3.4]{gop}).

In particular, we characterize the validity of the iterated
Hardy-type inequalities involving suprema
\begin{equation}\label{ISI.1}
\left\|S_{u} \bigg( \int_0^x h\bigg)\right\|_{q,w,\I} \leq c
\,\|h\|_{p,v,\I},
\end{equation}
and
\begin{equation}\label{ISI.2}
\left\|S_{u} \bigg( \int_x^{\i} h\bigg)\right\|_{q,w,\I} \leq c
\,\|h\|_{p,v,\I},
\end{equation}
where $0 < q < \infty$, $1 \le p < \i$, $u$, $w$ and $v$ are
weight functions on  $(0,\infty)$.

It is worth to mentoin that the characterizations of "dual"
inequalities
\begin{equation}\label{ISI.3}
\left\| S_{u}^* \bigg( \int_x^{\i} h\bigg)\right\|_{q,w,\I} \leq c
\,\|h\|_{p,v,\I},
\end{equation}
and
\begin{equation}\label{ISI.4}
\left\|S_{u}^* \bigg( \int_0^x h\bigg)\right\|_{q,w,\I} \leq c
\,\|h\|_{p,v,\I},
\end{equation}
can be easily obtained  from the solutions of inequalities
\eqref{ISI.1} - \eqref{ISI.2}, respectively, by change of variables. Note that inequality
\eqref{ISI.4} has been characterized in \cite{gop} in the case $0 < q < \infty$, $1 \le p < \infty$.

We pronounce that the characterizations of inequalities
\eqref{ISI.1} - \eqref{ISI.4} are important because many
inequalities for classical operators  can be reduced to them (for
illustrations of this important fact, see, for instance,
\cite{GogMusPers2}). These inequalities play an important role in
the theory of Morrey spaces and other topics (see \cite{BGGM1},
\cite{BGGM2} and \cite{BO}).

The fractional maximal operator, $M_{\gamma}$, $\gamma \in (0,n)$, is defined at $f \in L_{\loc}^1(\rn)$ by
$$
(M_{\gamma} f) (x) = \sup_{Q \ni x} |Q|^{ \gamma / n - 1} \int_{Q} |f(y)|\,dy,\quad x \in \rn,
$$
where the supremum is extended over all cubes $Q \subset \rn$ with sides parallel to the coordinate axes. It was shown in \cite[Theorem 1.1]{ckop} that
\begin{equation}\label{frac.max op.eq.1.}
(M_{\gamma}f)^* (t) \ls \sup_{t \le \tau < \infty} \tau^{\gamma / n - 1} \int_0^{\tau} f^*(y)\,dy \ls  (M_{\gamma} \tilde{f})^* (t),
\end{equation}
for every $f \in L_{\loc}^1(\rn)$ and $t \in \I$, where $\tilde{f} (x) = f^* (\omega_n |x|^n)$ and $\omega_n$ is the volume of $S^{n-1}$.
Thus, in order to characterize boundedness of the fractional maximal operator $M_{\gamma}$ between classical Lorentz spaces it is necessary and sufficient to characterize the validity of the weighted inequality
$$
\bigg( \int_0^{\infty} \bigg[\sup_{t \le \tau < \infty} \tau^{\gamma / n - 1} \int_0^{\tau} \vp(y)\,dy\bigg]^q w(t)\,dt\bigg)^{1 / q} \ls \bigg( \int_0^{\infty} [\vp(t)]^p v(t)\,dt\bigg)^{1 / p}
$$
for all $\vp \in \M^{\dn}$. This last estimate can be interpreted as a restricted weighted inequality for the operator $T_{\gamma}$, defined by
\begin{equation}\label{frac.max op.eq.3.}
(T_{\gamma}g) (t) = \sup_{t \le \tau < \infty} \tau^{\gamma / n - 1} \int_0^{\tau} g(y)\,dy, \quad g \in \M^+ \I, \quad t \in \I.
\end{equation}
Such a characterization was obtained in \cite{ckop} for the particular case when $1 < p \le q <\infty$ and in \cite[Theorem 2.10]{o} in the case of more general operators and for extended range of $p$ and $q$. Full proofs and some further extensions and applications can be found in \cite{edop}, \cite{edop2008}.

The operator $T_{\gamma}$ is a typical example of what is called a Hardy-operator involving suprema
$$
(T_u g)(t) : = \sup_{t \le s < \infty} \frac{u(s)}{s} \int_0^s g(y)\,dy,
$$
which combines both the operations (integration and taking the supremum).

In the above-mentioned applications, it is required to characterize a restricted weighted inequality for $T_u$. This amounts to finding a necessary and sufficient condition on a triple of weights $(u,\,v,\,w)$ such that the inequality
\begin{equation}\label{ineq.for.Tu}
\bigg(\int_0^{\infty} \bigg(\sup_{t \le s < \infty} u(s) f^{**}(s)\bigg)^q w(t)\,dt\bigg)^{1 / q} \ls \bigg(\int_0^{\infty} f^*(t)^p v(t)\,dt \bigg)^{1/p}
\end{equation}
holds. Particular examples of such inequalities were studied in \cite{ckop} and, in a more systematic way, in \cite{gop}. Inequality \eqref{ineq.for.Tu} was investigated in \cite{gogpick2007} in the case when $0< p \le 1$. The approach used in this paper was based on a new type reduction theorem which showed connection between three types of restricted weighted inequalities.

Rather interestingly, such operators have been recently encountered in various research projects. They have been found indispensable in the search for optimal pairs of rearrangement-invariant norms for wich a Sobolev-type inequality holds (cf. \cite{kerp}). They constitute a very useful tool for characterization of the assocaiate norm of an operator-induced norm, which naturally appears as an optimal domain norm in a Sobolev embedding (cf. \cite{pick2000}, \cite{pick2002}). Supremum operators are also very useful in limitimg interpolation theory as can be seen from their appearance for example in \cite{evop}, \cite{dok}, \cite{cwikpys}, \cite{pys}.

\begin{definition}\label{Tub.defi.1}
    Let $u \in \W\I \cap C\I$, $b \in \W\I$ and $B(t) : = \int_0^t b(s)\,ds$. Assume that $b$
    be such that $0 < B(t) < \infty$ for every $t \in \I$. The operator $T_{u,b}$ is defined at $g \in \M^+ \I$ by
    $$
    (T_{u,b} g)(t) : = \sup_{t \le \tau < \infty} \frac{u(\tau)}{B(\tau)} \int_0^{\tau} g(s)b(s)\,ds,\qquad t \in \I.
    $$
\end{definition}
The operator $T_{\gamma}$, defined in \eqref{frac.max op.eq.3.}, is a particular example of operators $T_{u,b}$. These operators are investigated in \cite{gop} and \cite{gogpick2007}.

In this paper we give complete characterization for the inequality
\begin{equation}\label{Tub.thm.1.eq.1}
\|T_{u,b}f \|_{q,w,\I} \le c \| f \|_{p,v,\I}, \qq f \in \M^{\dn}(0,\i)
\end{equation}
for $0 < q \le \infty$, $0 < p < \infty$ (see Theorems \ref{Tub.thm.1} and \ref{RT.SO.thm.3}).

Inequality \eqref{Tub.thm.1.eq.1} was characterized in \cite[Theorem 3.5]{gop} under additional condition
$$
\sup_{0 < t < \infty} \frac{u(t)}{B(t)} \int_0^t \frac{b(\tau)}{u(\tau)}\,d\tau < \infty.
$$
Note that the case when $0 < p \le 1 < q < \infty$ was not considered in \cite{gop}. It is also worse to mention that in the case when $1 < p < \infty$, $0 < q < p < \infty$, $q \neq 1$ \cite[Theorem 3.5]{gop} contains only discrete condition. In \cite{gogpick2007} the new reduction theorem was obtained when $0 < p \le 1$, and this technique allowed to characterize inequality \eqref{Tub.thm.1.eq.1} when $b \equiv 1$, and in the case when $0 < q< p \le 1$ this paper contains only discrete condition.

The paper is organized as follows. Section \ref{pre} contains some
preliminaries along with the standard ingredients used in the
proofs. Full characterization of inequalities  \eqref{supr.ineq.1} - \eqref{supr.ineq.4} and \eqref{ISI.1} - \eqref{ISI.3} are given in Sections \ref{s.3} and \ref{s.4}. Finally, solution of inequality \eqref{Tub.thm.1.eq.1} are obtained in Section \ref{s.5}.

\section{Notations and Preliminaries}\label{pre}

Throughout the paper, we always denote by  $c$ or $C$ a positive
constant, which is independent of main parameters but it may vary
from line to line. However a constant with subscript such as $c_1$
does not change in different occurrences. By $a\lesssim b$,
($b\gtrsim a$) we mean that $a\leq \la b$, where $\la >0$ depends on
inessential parameters. If $a\lesssim b$ and $b\lesssim a$, we write
$a\approx b$ and say that $a$ and $b$ are  equivalent.  We will denote by $\bf 1$ the
function ${\bf 1}(x) = 1$, $x \in \I$. Unless a
special remark is made, the differential element $dx$ is omitted
when the integrals under consideration are the Lebesgue integrals.
Everywhere in the paper, $u$, $v$ and $w$ are weights.

We need the following notations:
$$
\begin{array}{ll}
V(t) : = \int_0^t v, & V_*(t) : = \int_t^{\i} v,\\ [10pt] W(t) : =
\int_0^t w, & W_*(t) :  = \int_t^{\i} w.
\end{array}
$$

\begin{convention}\label{Notat.and.prelim.conv.1.1}
    We adopt the following conventions:

    {\rm (i)} Throughout the paper we put $0 \cdot \i = 0$, $\i / \i =
    0$ and $0/0 = 0$.

    {\rm (ii)} If $p\in [1,+\i]$, we define $p'$ by $1/p + 1/p' = 1$.

    {\rm (iii)} If $0 < q < p < \infty$, we define $r$ by $1 / r  = 1 / q - 1 / p$.

    {\rm (iv)} If $I = (a,b) \subseteq \R$ and $g$ is monotone
    function on $I$, then by $g(a)$ and $g(b)$ we mean the limits
    $\lim_{x\rw a+}g(x)$ and $\lim_{x\rw b-}g(x)$, respectively.
\end{convention}

We recall some reduction theorems for positive monotone operators from \cite{GogStep} and \cite{GogMusIHI}. The following conditions will be used below:

{\rm (i)} $T(\la f) = \la Tf$ for all $\la \ge 0$ and $f \in \M^+$;

{\rm (ii)} $Tf(x) \le c Tg(x)$ for almost all $x \in \R_+$ if $f(x)
\le g(x)$ for almost all $x \in \R_+$, with constant $c > 0$
independent of $f$ and $g$;

{\rm (iii)} $T(f+ \la {\bf 1}) \le c (T f + \la T {\bf 1})$ for all
$f \in \M^+$ and $\la \ge 0$, with a constant $c > 0$ independent of
$f$ and $\la$.

\begin{theorem}[\cite{GogStep}, Theorem 3.1]\label{Reduction.Theorem.thm.3.1}
    Let $0 < q \le \infty$ and $1 \le p < \infty$, and let $T: {\mathfrak M}^+ \rightarrow {\mathfrak M}^+$
    be an operator. Then the inequality
    \begin{equation}\label{Reduction.Theorem.eq.1.1}
    \|Tf \|_{q,w,(0,\infty)} \le c \| f \|_{p,v,(0,\infty)}, \qquad f \in {\mathfrak M}^{\downarrow}(0,\infty)
    \end{equation}
    implies the inequality
    \begin{equation}\label{Reduction.Theorem.eq.1.2}
    \bigg\| T\bigg( \int_x^{\infty} h \bigg)\bigg\|_{q,w,(0,\infty)} \le c \| h
    \|_{p, V^{p} v^{1-p},(0,\infty)}, \qquad h \in {\mathfrak M}^+(0,\infty).
    \end{equation}
    If $V(\infty) = \infty$ and if $T$ is an operator satisfying
    conditions {\rm (i)-(ii)}, then the condition
    \eqref{Reduction.Theorem.eq.1.2} is sufficient for inequality
    \eqref{Reduction.Theorem.eq.1.1} to hold on the cone ${\mathfrak M}^{\downarrow}$.
    Further, if $0 < V(\infty) < \infty$, then a sufficient condition for
    \eqref{Reduction.Theorem.eq.1.1} to hold on ${\mathfrak M}^{\downarrow}$ is that both
    \eqref{Reduction.Theorem.eq.1.2} and
    \begin{equation}\label{Reduction.Theorem.eq.1.3}
    \|T {\bf 1}\|_{q,w,(0,\infty)} \le c \|{\bf 1}\|_{p,v,(0,\infty)}
    \end{equation}
    hold in the case when $T$ satisfies the conditions {\rm (i)-(iii)}.
\end{theorem}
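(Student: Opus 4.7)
The plan is to exploit the bijection between the cone $\mp^{\dn}$ of non-increasing functions vanishing at infinity and the cone $\mp^+$ via $f(x) = \int_x^{\i} h$ with $h = -f'$. The analytic engine of the proof is the equivalence, valid for $p \ge 1$ and any $h \in \mp^+$,
\begin{equation*}
\int_0^{\i}\!\!\left(\int_x^{\i} h(y)\,dy\right)^{\!p}\! v(x)\,dx \ap \int_0^{\i} h(y)^p V(y)^p v(y)^{1-p}\,dy, \qquad (\star)
\end{equation*}
so that $\|f\|_{p,v,\I} \ap \|h\|_{p,V^p v^{1-p},\I}$ whenever $f = \int_x^{\i} h$. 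The upper estimate in $(\star)$ comes from the integration-by-parts identity $\int F^p v = p\int V F^{p-1} h$ (with $F(x)=\int_x^\i h$) followed by H\"older's inequality splitting $Vh = (v^{(p-1)/p} F^{p-1})\cdot(Vh\,v^{-(p-1)/p})$; the lower estimate follows by Fubini's theorem when $p=1$ and by a Sinnamon-type duality argument for $p>1$.

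Given $(\star)$, the implication \eqref{Reduction.Theorem.eq.1.1}~$\Rightarrow$~\eqref{Reduction.Theorem.eq.1.2} is immediate: for any $h \in \mp^+$ the function $x \mapsto \int_x^\i h$ belongs to $\mp^{\dn}$, so applying \eqref{Reduction.Theorem.eq.1.1} and then invoking the upper half of $(\star)$ gives \eqref{Reduction.Theorem.eq.1.2}.

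For the converse when $V(\i)=\i$: any $f \in \mp^{\dn}$ with $\|f\|_{p,v,\I}<\i$ necessarily satisfies $f(\i)=0$, for otherwise $\int_0^\i f^p v \ge f(\i)^p V(\i) = \i$. By a standard approximation of monotone functions by absolutely continuous ones (justified by properties~(i)--(ii) of $T$ and monotone convergence on both sides of the inequality), we may write $f(x) = \int_x^{\i} h$ with $h = -f' \ge 0$. Applying \eqref{Reduction.Theorem.eq.1.2} to $h$ and then the lower half of $(\star)$ yields $\|Tf\|_{q,w,\I} \ls \|h\|_{p,V^p v^{1-p},\I} \ls \|f\|_{p,v,\I}$.

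In the remaining case $0 < V(\i) < \i$, a non-increasing $f$ need not vanish at infinity, so we decompose $f = g + f(\i)\mathbf{1}$ with $g(x) = \int_x^{\i} h$, $g(\i)=0$. Property~(iii) gives $Tf \le c\bigl(Tg + f(\i)\,T\mathbf{1}\bigr)$, hence
\begin{equation*}
\|Tf\|_{q,w,\I} \ls \|Tg\|_{q,w,\I} + f(\i)\,\|T\mathbf{1}\|_{q,w,\I}.
\end{equation*}
The first summand is controlled via \eqref{Reduction.Theorem.eq.1.2} and $(\star)$ by $\|g\|_{p,v,\I} \le \|f\|_{p,v,\I}$; the second is handled by \eqref{Reduction.Theorem.eq.1.3} together with the elementary bound $f(\i)V(\i)^{1/p} \le \|f\|_{p,v,\I}$ (since $f \ge f(\i)$ a.e.). The main technical obstacle is the rigorous justification of $(\star)$ with constants independent of $h$ and of the approximation step in the case $V(\i)=\i$; both are handled by Fubini/H\"older and a standard density argument, while hypotheses~(i)--(iii) are precisely what is needed to transfer the inequalities through these limiting procedures.
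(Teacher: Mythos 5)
The necessity direction of your argument (inequality \eqref{Reduction.Theorem.eq.1.1} $\Rightarrow$ \eqref{Reduction.Theorem.eq.1.2}) is fine: the integration-by-parts identity $\int_0^\infty F^p v = p\int_0^\infty V F^{p-1}h$ with $F(x)=\int_x^\infty h$, followed by H\"older's inequality, does give $\|F\|_{p,v}\lesssim \|h\|_{p,V^pv^{1-p}}$ for $1\le p<\infty$, and since $F\in\M^{\dn}$ one may apply \eqref{Reduction.Theorem.eq.1.1} directly. Nothing needs to be said about $T$ here, so that part is correct.

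The sufficiency direction, however, contains a genuine gap. Your entire strategy rests on the \emph{lower} half of the claimed equivalence $(\star)$, namely
$$
\int_0^\infty h(y)^p V(y)^p v(y)^{1-p}\,dy \lesssim \int_0^\infty\Big(\int_x^\infty h\Big)^p v(x)\,dx,
$$
which you then apply with $h=-f'$. This estimate is true for $p=1$ (Fubini gives equality), but it is \emph{false} for $p>1$, and no duality argument can rescue it. Take $v\equiv 1$ near $1/2$, so $V(y)\approx y$, and let $h=\varepsilon^{-1}\chi_{(1/2,1/2+\varepsilon)}$. Then the right-hand side is bounded (it is $\approx 1/2$), while the left-hand side is $\approx\varepsilon^{1-p}\to\infty$ as $\varepsilon\to 0$. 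This is exactly the situation produced by approximating a step function $f\approx\chi_{(0,a)}$ by absolutely continuous non-increasing functions: the derivative becomes a narrow spike, and $\|{-}f'\|_{p,V^pv^{1-p}}$ blows up even though $\|f\|_{p,v}$ is fixed. So the passage ``apply \eqref{Reduction.Theorem.eq.1.2} to $h=-f'$ and then use the lower half of $(\star)$'' simply does not yield $\|Tf\|_{q,w}\lesssim\|f\|_{p,v}$ when $p>1$, and the density/approximation step you invoke cannot repair it.

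The missing idea is that the $h$ one feeds into \eqref{Reduction.Theorem.eq.1.2} must \emph{not} be taken as $-f'$; instead one has to construct, for a given $f\in\M^{\dn}$ with $f(\infty)=0$, a function $h\in\M^+$ satisfying the two properties $f(x)\le\int_x^\infty h$ (so that monotonicity (ii) of $T$ gives $Tf\lesssim T(\int_\cdot^\infty h)$) and $\|h\|_{p,V^pv^{1-p}}\lesssim\|f\|_{p,v}$. A construction that works (and is the sort of thing the cited reduction theorem of Gogatishvili--Stepanov relies on): write $\mu:=-df$ as a nonnegative measure, define $\psi$ by $V(\psi(x))=eV(x)$ (using $V(\infty)=\infty$), and set
$$
h(s):=\frac{v(s)}{V(s)}\,\mu\big([\psi^{-1}(s),s]\big).
$$
Fubini gives $\int_x^\infty h\ge\int_{[x,\infty)}\big(\int_t^{\psi(t)}\tfrac{v}{V}\big)\,d\mu(t)=\mu([x,\infty))=f(x)$, and the change of variables $s=\psi(t)$ together with $\mu([\psi^{-1}(s),s])\le f(\psi^{-1}(s))$ gives $\int_0^\infty h^pV^pv^{1-p}\le e\int_0^\infty f^pv$. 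This ``smeared derivative'' is the analytic engine that actually makes the sufficiency direction work for $p>1$; the case $0<V(\infty)<\infty$ is then treated by splitting off the constant $f(\infty)\mathbf 1$ exactly as you describe, using (iii) and \eqref{Reduction.Theorem.eq.1.3}.
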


\begin{theorem}[\cite{GogStep}, Theorem 3.2]\label{Reduction.Theorem.thm.3.2}
    Let $0 < q \le \infty$ and $1 \le p < \infty$, and let $T: {\mathfrak M}^+ \rightarrow {\mathfrak M}^+$
    satisfies conditions {\rm (i)} and {\rm (ii)}. Then a sufficient
    condition for inequality \eqref{Reduction.Theorem.eq.1.1} to hold is
    that
    \begin{equation}\label{Reduction.Theorem.eq.1.222}
    \bigg\| T\bigg( \frac{1}{V^2(x)} \int_0^x hV \bigg)\bigg\|_{q,w,(0,\infty)}
    \le c \| h \|_{p, v^{1-p},(0,\infty)}, \qquad h \in {\mathfrak M}^+(0,\infty).
    \end{equation}
    Moreover, \eqref{Reduction.Theorem.eq.1.1} is necessary for
    \eqref{Reduction.Theorem.eq.1.222} to hold if conditions {\rm
        (i)-(iii)} are all satisfied.
\end{theorem}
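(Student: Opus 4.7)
The plan is to prove the two implications separately, linking them through the previous reduction theorem \ref{Reduction.Theorem.thm.3.1}. For sufficiency I would argue by a pointwise domination: any $f\in\M^{\dn}$ can be bounded (up to a factor of $2$) by an expression of the form $\frac{1}{V(x)^2}\int_0^x hV$ with the choice $h:=fv$. For necessity I would first apply Theorem \ref{Reduction.Theorem.thm.3.1} to the hypothesis and then absorb the resulting tail integral via a weighted Hardy inequality, invoking condition (iii) in the boundary regime $V(\i)<\i$.

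\emph{Sufficiency.} Given $f\in\M^{\dn}\I$, integration by parts against the antiderivative $V^2/2$ of $Vv$ gives
\begin{equation*}
\int_0^x f(t)V(t)v(t)\,dt \ =\ \tfrac12 f(x) V(x)^2 + \tfrac12 \int_0^x V(t)^2\,(-df(t)) \ \ge\ \tfrac12 f(x)V(x)^2,
\end{equation*}
where $-df$ denotes the non-negative Stieltjes measure associated with the non-increasing $f$. Setting $h:=fv$ yields the pointwise bound $f(x) \le \frac{2}{V(x)^2}\int_0^x h(t)V(t)\,dt$. Conditions (i) and (ii) then deliver $Tf \le 2c\,T\bigl(\frac{1}{V(\cdot)^2}\int_0^{\,\cdot\,}hV\bigr)$, and \eqref{Reduction.Theorem.eq.1.222} combined with the identity $\|h\|_{p,v^{1-p},\I}^p = \int_\I (fv)^p v^{1-p} = \int_\I f^p v = \|f\|_{p,v,\I}^p$ yields \eqref{Reduction.Theorem.eq.1.1}.

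\emph{Necessity.} Assume \eqref{Reduction.Theorem.eq.1.1} and conditions (i)-(iii). Theorem \ref{Reduction.Theorem.thm.3.1} converts the hypothesis into
\begin{equation*}
\Big\|T\!\Big(\int_x^{\i} k\Big)\Big\|_{q,w,\I}\ \le\ c\,\|k\|_{p,V^p v^{1-p},\I},\qquad k\in\M^+\I.
\end{equation*}
For given $h\in\M^+\I$ set $g(x):=\frac{1}{V(x)^2}\int_0^x hV$. I would combine the identity $\frac{1}{V(x)^2} = \int_x^{\i}\frac{2v(s)}{V(s)^3}\,ds + \frac{1}{V(\i)^2}$ (with the convention $1/V(\i)^2:=0$ when $V(\i)=\i$) with the monotonicity $\int_0^x hV \le \int_0^s hV$ for $s\ge x$ to obtain the pointwise bound
\begin{equation*}
g(x) \ \le\ \int_x^{\i}K(s)\,ds + \lambda, \qquad K(s):=\tfrac{2v(s)}{V(s)^3}\int_0^s hV,\ \ \lambda:=\tfrac{1}{V(\i)^2}\int_0^{\i}hV.
\end{equation*}
Conditions (ii) and (iii) then produce
\begin{equation*}
\|Tg\|_{q,w,\I}\ \ls\ \Big\|T\!\Big(\int_x^{\i}K\Big)\Big\|_{q,w,\I} + \lambda\,\|T{\bf 1}\|_{q,w,\I}.
\end{equation*}
The first summand is controlled by the previous display combined with the weighted Hardy bound $\|K\|_{p,V^p v^{1-p},\I}\ls\|h\|_{p,v^{1-p},\I}$, whose Muckenhoupt $A_p$-constant turns out to be independent of the weights (for $p=1$ a Fubini calculation gives equality with constant $1$). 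The additive term $\lambda\|T{\bf 1}\|_{q,w,\I}$ arises only when $V(\i)<\i$, and is estimated by applying \eqref{Reduction.Theorem.eq.1.1} to $f\equiv 1$ (giving $\|T{\bf 1}\|_{q,w,\I}\ls V(\i)^{1/p}$) and H\"older's inequality to $\int_0^{\i} hV$ (giving $\lambda\ls V(\i)^{-1/p}\|h\|_{p,v^{1-p},\I}$).

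\emph{Main obstacle.} The principal technical effort lies in the necessity direction, specifically in the verification of the weighted Hardy estimate used to absorb $K$ and, in the boundary regime $V(\i)<\i$, in the split $Tg \ls T(\int_x^{\i}K) + \lambda T{\bf 1}$ that forces condition (iii) into the hypotheses; keeping track of the interplay between these two regimes and all ancillary constants is the main burden of the proof.
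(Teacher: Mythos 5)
Your proof is correct and both directions are established with the right hypotheses. The paper does not prove this statement --- it is quoted from \cite{GogStep}, Theorem~3.2 --- so there is no in-paper proof to compare against; your argument nonetheless follows the natural route. Two minor observations. The pointwise bound $f(x)\le 2V(x)^{-2}\int_0^x fVv$ in the sufficiency direction already follows from monotonicity of $f$, since $\int_0^x fVv\ge f(x)\int_0^x Vv=\tfrac12 f(x)V(x)^2$; the integration by parts is correct but not needed. In the Fubini check for $p=1$, the constant in $\|K\|_{1,V,\I}\ls\|h\|_{1,\I}$ is at most $2$, not exactly $1$, since the factor $2$ from $(V^{-2})'=-2vV^{-3}$ persists in $K$; this does not affect the argument. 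Finally, you correctly read the theorem's somewhat infelicitous wording (``\eqref{Reduction.Theorem.eq.1.1} is necessary for \eqref{Reduction.Theorem.eq.1.222}'') as the implication $\eqref{Reduction.Theorem.eq.1.1}\Rightarrow\eqref{Reduction.Theorem.eq.1.222}$ under conditions (i)--(iii), consistent with how the theorem is later used as an equivalence, and your necessity proof pinpoints exactly where condition (iii) enters: the boundary term $\lambda\, T{\bf 1}$ in the regime $V(\i)<\i$.
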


\begin{theorem}[\cite{GogStep}, Theorem 3.3]\label{Reduction.Theorem.thm.3.3}
    Let $0 < q \le \infty$ and $1 \le p < \infty$, and let $T: {\mathfrak M}^+ \rightarrow {\mathfrak M}^+$
    be an operator. Then the inequality
    \begin{equation}\label{Reduction.Theorem.eq.1.1.00}
    \|Tf \|_{q,w,(0,\infty)} \le c \| f \|_{p,v,(0,\infty)}, \qquad f \in {\mathfrak M}^{\uparrow}(0,\infty)
    \end{equation}
    implies the inequality
    \begin{equation}\label{Reduction.Theorem.eq.1.2.00}
    \bigg\| T\bigg( \int_0^x h \bigg)\bigg\|_{q,w,(0,\infty)} \le c \| h \|_{p,
        V_*^{p} v^{1-p},(0,\infty)}, \qquad h \in {\mathfrak M}^+(0,\infty).
    \end{equation}
    If $V_*(0) = \infty$ and if $T$ is an operator satisfying the
    conditions {\rm (i)-(ii)}, then the condition
    \eqref{Reduction.Theorem.eq.1.2.00} is sufficient for inequality
    \eqref{Reduction.Theorem.eq.1.1.00} to hold. If $0 < V_*(0) < \infty$
    and $T$ is an operator satisfying the conditions {\rm
        (i)-(iii)}, then \eqref{Reduction.Theorem.eq.1.1.00} follows from
    \eqref{Reduction.Theorem.eq.1.2.00} and
    \eqref{Reduction.Theorem.eq.1.3}.
\end{theorem}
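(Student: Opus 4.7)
The plan is to mirror the proof of Theorem~\ref{Reduction.Theorem.thm.3.1}, replacing the Hardy averaging $\int_x^{\i}$ by $\int_0^x$, the tail $V$ by $V_*$, and the cone $\mp^{\dn}$ by $\mp^{\up}$. The three assertions --- necessity of \eqref{Reduction.Theorem.eq.1.2.00}, sufficiency under $V_*(0)=\i$, and sufficiency under $0<V_*(0)<\i$ --- all rest on the classical weighted Hardy equivalence, valid for $1\le p<\i$:
\begin{equation}\label{plan.hardy}
\left(\int_0^{\i}\left(\int_0^x h(y)\,dy\right)^{p}v(x)\,dx\right)^{1/p}\ap\left(\int_0^{\i}h(y)^p V_*(y)^p v(y)^{1-p}\,dy\right)^{1/p},\qquad h\in\mp^+\I.
\end{equation}
For $p=1$ this reduces to Fubini; for $p>1$ it is the familiar conjugate Hardy inequality, with constants depending only on $p$.

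For the necessity direction I would take $h\in\mp^+\I$, set $f(x):=\int_0^x h$, which lies in $\mp^{\up}$, and insert it into the hypothesis \eqref{Reduction.Theorem.eq.1.1.00}, yielding $\|T(\int_0^x h)\|_{q,w,\I}\le c\|\int_0^x h\|_{p,v,\I}$. The forward direction of \eqref{plan.hardy} then bounds the right-hand side by $c'\|h\|_{p,V_*^p v^{1-p},\I}$, which is exactly \eqref{Reduction.Theorem.eq.1.2.00}.

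For sufficiency when $V_*(0)=\i$, I would note that if $f(0+)>0$ then $\|f\|_{p,v,\I}\ge f(0+)V_*(0)^{1/p}=\i$ and the target inequality is trivial, so we may assume $f(0+)=0$. For absolutely continuous $f\in\mp^{\up}$ one writes $f(x)=\int_0^x h$ with $h=f'\in\mp^+\I$, applies \eqref{Reduction.Theorem.eq.1.2.00}, and concludes via the reverse direction of \eqref{plan.hardy}. A general $f\in\mp^{\up}$ with $f(0+)=0$ is reached by approximating the Lebesgue--Stieltjes measure $df$ from below by absolutely continuous densities $h_n\,dy$, setting $f_n(x):=\int_0^x h_n$, and passing to the limit using (i)--(ii) together with a monotone/Fatou-type argument. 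For the remaining case $0<V_*(0)<\i$, I would split $f=f(0+)\cdot{\bf 1}+g$ with $g(x):=f(x)-f(0+)\in\mp^{\up}$ and $g(0+)=0$. Properties (i) and (iii) give $\|Tf\|_{q,w,\I}\ls f(0+)\|T{\bf 1}\|_{q,w,\I}+\|Tg\|_{q,w,\I}$; the first term is controlled by $\|f\|_{p,v,\I}$ via \eqref{Reduction.Theorem.eq.1.3}, and the second by $\|g\|_{p,v,\I}\le\|f\|_{p,v,\I}$ by the previous case applied to $g$.

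The main technical obstacle I anticipate is the passage from absolutely continuous $f$ to arbitrary $f\in\mp^{\up}$ in the sufficiency direction, since a general non-decreasing function need not be absolutely continuous; resolving this requires a careful monotone approximation of $df$ and use of property (ii) together with some continuity from below on the ambient $L^p$ and $L^q$ spaces. Everything else reduces to the substitution $f\leftrightarrow\int_0^x h$ plus a single invocation of \eqref{plan.hardy} in the appropriate direction.
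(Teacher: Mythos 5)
Your reduction hinges entirely on the two-sided ``equivalence'' \eqref{plan.hardy}, and this is where the argument breaks. The $\ls$ direction of \eqref{plan.hardy} is indeed a weighted Hardy inequality whose Airy--Muckenhoupt-type condition is automatically satisfied (a short computation shows $\sup_t V_*(t)^{1/p}\big(\int_0^t V_*^{-p'}v\big)^{1/p'}\ls 1$ for every weight $v$), so the necessity part of your proof --- plug in $f=\int_0^x h$ and bound $\|f\|_{p,v}$ by $\|h\|_{p,V_*^p v^{1-p}}$ --- is correct. But the $\gs$ direction of \eqref{plan.hardy} is \emph{false} for $p>1$, even when $V_*(0)=\i$, and it is precisely this direction that your sufficiency argument uses when you take $h=f'$ and try to control $\|h\|_{p,V_*^p v^{1-p}}$ by $\|f\|_{p,v}$.

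A concrete counterexample: take $p=2$, $v(t)=1/t$ on $(0,1)$ and $v(t)=1/t^2$ on $(1,\i)$, so $V_*(t)=\log(1/t)+1$ on $(0,1)$ and $V_*(0)=\i$. With $h_\ve=\chi_{(\ve,2\ve)}$ and $f_\ve=\int_0^x h_\ve$ one finds
\[
\|f_\ve\|_{2,v}^2\ap \ve^2\log(1/\ve),\qquad \|h_\ve\|_{2,V_*^2 v^{-1}}^2\ap \ve^2\big(\log(1/\ve)\big)^2,
\]
so $\|h_\ve\|_{p,V_*^p v^{1-p}}/\|f_\ve\|_{p,v}\to\i$ as $\ve\to 0$. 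Thus there is no constant $C$ with $\|f'\|_{p,V_*^p v^{1-p}}\le C\|f\|_{p,v}$ for all absolutely continuous non-decreasing $f$ with $f(0+)=0$, and the step ``apply \eqref{Reduction.Theorem.eq.1.2.00} to $h=f'$ and conclude via the reverse direction of \eqref{plan.hardy}'' does not go through. (For $p=1$ the identity is exact by Fubini and there is no problem, but the theorem's nontrivial content is $p>1$.)

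The actual sufficiency argument in the style of \cite{GogStep} does not take $h=f'$. Instead, given $f\in\mp^\up$ with $f(0+)=0$, one constructs a \emph{different} $h\in\mp^+$ for which both
\[
f(x)\ls \int_0^x h \quad\text{pointwise, and}\quad \|h\|_{p,V_*^p v^{1-p}}\ls \|f\|_{p,v}
\]
hold simultaneously; property (ii) of $T$ then transfers \eqref{Reduction.Theorem.eq.1.2.00} to $f$. One workable construction is to change variables $u=V_*(t)$, rewrite $\|f\|_{p,v}^p$ as $\int_0^\i F(u)^p\,du$ with $F=f\circ V_*^{-1}$ non-increasing, take $H(u)=F(u/2)/u$ (so that $\int_w^\i H\gs F(w)$ and $\int_0^\i H^p u^p\,du\ls\int_0^\i F^p\,du$), and pull $H$ back to an $h$ on the $t$-axis. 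Your plan skips this construction entirely and replaces it with a false norm equivalence; that is the gap. The monotone-approximation issue for non-absolutely-continuous $f$ that you flag as ``the main technical obstacle'' is in fact minor; the real obstacle is the choice of $h$.
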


\begin{theorem}[\cite{GogStep}, Theorem 3.4]\label{Reduction.Theorem.thm.3.4}
    Let $0 < q \le \infty$ and $1 \le p < \infty$, and let $T: {\mathfrak M}^+ \rightarrow {\mathfrak M}^+$
    satisfies conditions {\rm (i)} and {\rm (ii)}. Then a sufficient
    condition for inequality \eqref{Reduction.Theorem.eq.1.1.00} to hold
    is that
    \begin{equation}\label{Reduction.Theorem.eq.1.2222}
    \bigg\| T\bigg( \frac{1}{V_*^2(x)} \int_x^{\infty} hV_*
    \bigg)\bigg\|_{q,w,(0,\infty)} \le c \| h \|_{p, v^{1-p},(0,\infty)}, \qquad h \in
    {\mathfrak M}^+(0,\infty).
    \end{equation}
    Moreover, \eqref{Reduction.Theorem.eq.1.1.00} is necessary for
    \eqref{Reduction.Theorem.eq.1.2222} to hold if conditions {\rm
        (i)-(iii)} are all satisfied.
\end{theorem}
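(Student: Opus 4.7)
For the sufficiency direction, I would start from the observation that any $f \in \M^{\uparrow}$ satisfies $f(x) \le (2/V_*^2(x)) \int_x^\infty f(s)v(s)V_*(s)\,ds$, which follows from $f(s) \ge f(x)$ on $[x,\infty)$ combined with the elementary identity $\int_x^\infty v V_* = V_*^2(x)/2$ (coming from $v V_* = -\tfrac12(V_*^2)'$). By properties (i) and (ii) of $T$ this pointwise bound gives
$$Tf(x) \le 2c\, T\!\left(\tfrac{1}{V_*^2(\cdot)}\textstyle\int_\cdot^\infty fvV_*\right)(x).$$
Taking the $L^q(w)$ norm and applying the assumed inequality \eqref{Reduction.Theorem.eq.1.2222} with $h := fv$ (noting that $\|fv\|_{p,v^{1-p}} = \|f\|_{p,v}$) completes this half.

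For necessity, assume now that $T$ satisfies (i)--(iii) and that \eqref{Reduction.Theorem.eq.1.1.00} holds. Fix $h \in \M^+$ and set $F(x) := V_*^{-2}(x)\int_x^\infty h V_*$ and $\psi(s) := \int_s^\infty hV_*$. Using $d(1/V_*^2(s)) = 2v(s)/V_*^3(s)\,ds$ and integration by parts, I obtain the identity
$$F(x) = f_*(x) + F(0+) - \int_0^x \frac{h(s)}{V_*(s)}\,ds, \qquad f_*(x) := 2\int_0^x \frac{v(s)\psi(s)}{V_*^3(s)}\,ds,$$
in which $f_*$ is manifestly non-decreasing. Dropping the non-negative last term yields the pointwise majorization $F(x) \le f_*(x) + F(0+)$. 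Then (ii) and (iii) let me pass through $T$, producing $TF \lesssim Tf_* + F(0+)\,T\mathbf{1}$; taking $\|\cdot\|_{q,w}$ and invoking \eqref{Reduction.Theorem.eq.1.1.00} on the non-decreasing functions $f_*$ and $\mathbf{1}$ gives
$$\|TF\|_{q,w} \lesssim \|f_*\|_{p,v} + F(0+)\,V_*(0+)^{1/p}.$$

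The remaining job is to bound each of these two summands by $\|h\|_{p,v^{1-p}}$. The boundary contribution $F(0+)V_*(0+)^{1/p}$ is handled directly by H\"older's inequality together with the explicit computation $\int_0^\infty V_*^{p'}v = V_*^{p'+1}(0+)/(p'+1)$ (which follows from substituting $r = V_*(s)$). The estimate $\|f_*\|_{p,v} \lesssim \|h\|_{p,v^{1-p}}$ is the main technical obstacle and is what forces the precise shape of the auxiliary operator in \eqref{Reduction.Theorem.eq.1.2222}: I would first apply the classical weighted Hardy inequality to the non-decreasing function $f_*$ (reducing $\|f_*\|_{p,v}^p$ to a constant multiple of $\int_0^\infty \psi(s)^p v(s)/V_*^{2p}(s)\,ds$), and then a dual Hardy inequality to bound this in turn by $\int_0^\infty h^p v^{1-p}$. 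Both Muckenhoupt-type constants can be shown to be uniformly finite after the same change of variable $r = V_*(s)$, but the verification must be carried out with care at the endpoints $p=1$ and $V_*(0+)=+\infty$, and this is the step where I expect the real work to lie.
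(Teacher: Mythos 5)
This theorem is quoted verbatim from \cite{GogStep} and is not proved in the present paper, so there is no in-paper argument to compare against; I can only assess your proposal on its own terms, and it is correct and essentially the standard Gogatishvili--Stepanov reduction. The sufficiency half works exactly as you say: the identity $\int_x^\infty vV_* = \tfrac12 V_*^2(x)$ gives the pointwise majorization of $f\in\M^{\uparrow}$, and the substitution $h=fv$ turns $\|h\|_{p,v^{1-p}}$ into $\|f\|_{p,v}$. For necessity, the decomposition $F=f_*+F(0+)-\int_0^x h/V_*$ is the right move, and the two weighted Hardy/Copson estimates you need do close out with absolute constants: after the change of variable $r=V_*(s)$ one has $(V_*^pv^{1-p})^{1-p'}=V_*^{-p'}v$ and $(v^{1-p}V_*^{-p})^{1-p'}=V_*^{p'}v$, so the forward Hardy constant is $\sup_x V_*(x)^{1/p}\bigl(\int_0^x V_*^{-p'}v\bigr)^{1/p'}\le(p'-1)^{-1/p'}$ and the Copson constant is $\sup_x\bigl(\int_0^x v/V_*^{2p}\bigr)^{1/p}\bigl(\int_x^\infty V_*^{p'}v\bigr)^{1/p'}\le(2p-1)^{-1/p}(p'+1)^{-1/p'}$, both independent of $v$. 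Your two flagged endpoint worries also resolve cleanly: at $p=1$ both steps degenerate to exact Fubini identities ($\int_0^\infty f_*v=2\int_0^\infty v\psi/V_*^2$ and $\int_0^s v/V_*^2\le1/V_*(s)$), and when $V_*(0+)=\infty$ a single Hölder estimate gives $\psi(x)\lesssim\|h\|_{p,v^{1-p}}V_*(x)^{(p'+1)/p'}$, hence $F(x)\lesssim\|h\|_{p,v^{1-p}}V_*(x)^{-1/p}\to0$, so the boundary term $F(0+)V_*(0+)^{1/p}$ vanishes (and is finite, $\lesssim\|h\|_{p,v^{1-p}}$, when $V_*(0+)<\infty$, as you note). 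So the argument is complete as sketched.
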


\begin{theorem}\cite[Theorem 3.1]{GogMusIHI}\label{RT.thm.main.3}
    Let $0 < \b \le \i$, $1 < p < \i$, and let $T: \M^+ \rw \M^+$
    satisfies conditions {\rm (i)-(iii)}.  Assume that $u,\,w \in \W\I$
    and $v \in \W\I$ be such that
    \begin{equation}\label{RT.thm.main.3.eq.0}
    \int_0^x v^{1-p^{\prime}}(t)\,dt < \i \qq \mbox{for all} \qq x > 0.
    \end{equation}
    Then inequality
    \begin{equation}
    \label{IHI.H.1} \left\|T \bigg(\int_0^x
    h\bigg)\right\|_{q,w,(0,\infty)} \leq c
    \,\|h\|_{p,v,(0,\infty)}, \, h \in {\mathfrak M}^+,
    \end{equation}
    holds iff
    \begin{equation}\label{RT.thm.main.3.eq.2}
    \| T_{\Phi^2} f \|_{\b, w, \I} \le c \| f \|_{p, \phi,\I}, \, f \in
    \M^{\dn},
    \end{equation}
    holds, where
    $$
    \phi (x) \equiv \phi\big[v;p\big](x) : = \bigg( \int_0^x
    v^{1-{p}^{\prime}}(t)\,dt\bigg)^{- {p^{\prime}} / {(p^{\prime} + 1)}}
    v^{1-{p}^{\prime}}(x)
    $$
    and
    $$
    \Phi(x) \equiv \Phi \big[v;p\big](x) = \int_0^x \phi(t)\,dt = \bigg( \int_0^x
    v^{1-{p}^{\prime}}(t)\,dt\bigg)^{{1} / {(p^{\prime} + 1)}}.
    $$
\end{theorem}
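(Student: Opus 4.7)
The plan is to apply Theorem~\ref{Reduction.Theorem.thm.3.2} with the auxiliary operator $T_{\Phi^2}$ in place of $T$ and the auxiliary weight $\phi$ in place of $v$. Under this substitution, $\int_0^x \phi = \Phi$, so \eqref{Reduction.Theorem.eq.1.1} reads exactly as \eqref{RT.thm.main.3.eq.2}, and the integral inequality \eqref{Reduction.Theorem.eq.1.222} becomes
$$\bigg\|T_{\Phi^2}\bigg(\frac{1}{\Phi^2(x)}\int_0^x h\Phi\bigg)\bigg\|_{q,w,\I} \le c\|h\|_{p,\phi^{1-p},\I}.$$
Since $T_{\Phi^2}(g) = T(\Phi^2 g)$, the $\Phi^2$'s cancel to yield $\|T(\int_0^x h\Phi)\|_{q,w,\I} \le c\|h\|_{p,\phi^{1-p},\I}$. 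The substitution $h \mapsto h/\Phi$ then rewrites this as $\|T(\int_0^x h)\|_{q,w,\I} \le c\|h\|_{p,\Phi^{-p}\phi^{1-p},\I}$. Using $(1-p)(1-p') = 1$ and $p'(p-1) = p$ together with $\phi = V_0^{-p'/(p'+1)}v^{1-p'}$ and $\Phi = V_0^{1/(p'+1)}$ (where $V_0(x) := \int_0^x v^{1-p'}$), a short computation gives $\Phi^{-p}\phi^{1-p} = v$, so the reduced inequality is precisely \eqref{IHI.H.1}.

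Properties {\rm (i)} and {\rm (ii)} for $T_{\Phi^2}$ transfer directly from $T$ since $\Phi^2 > 0$ is independent of the argument, and these already yield the direction $\eqref{IHI.H.1} \Rightarrow \eqref{RT.thm.main.3.eq.2}$ via the sufficient half of Theorem~\ref{Reduction.Theorem.thm.3.2}. This easy direction also admits a direct proof: given $f \in \M^{\dn}$, set $h := 2\Phi\phi f$; since $f$ is non-increasing and $(\Phi^2)' = 2\Phi\phi$, one has $\Phi^2(x) f(x) \le \int_0^x h(t)\,dt$ pointwise. Applying {\rm (ii)} of $T$, inequality \eqref{IHI.H.1}, and the companion identity $\Phi^p\phi^p v = \phi$ (which gives $\|h\|_{p,v} = 2\|f\|_{p,\phi}$) then closes the argument.

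The main obstacle is the reverse direction $\eqref{RT.thm.main.3.eq.2} \Rightarrow \eqref{IHI.H.1}$, i.e.\ the necessary half of Theorem~\ref{Reduction.Theorem.thm.3.2}, which requires property {\rm (iii)} for $T_{\Phi^2}$. Concretely, one needs $T(\Phi^2 f + \lambda\Phi^2) \le c(T(\Phi^2 f) + \lambda T(\Phi^2))$, and this is not a formal consequence of {\rm (iii)} for $T$ because $\lambda\Phi^2$ is not a constant multiple of $\mathbf{1}$. I expect to handle it either by verifying the splitting for $T_{\Phi^2}$ directly, exploiting $\Phi^2 = \int_0^x 2\Phi\phi$ together with a dyadic approximation of $\Phi^2$ by step functions to which {\rm (iii)} of $T$ can be applied piece by piece, or by adapting the proof of the necessary direction of Theorem~\ref{Reduction.Theorem.thm.3.2} so that {\rm (iii)} of the original $T$ enters only at the step where a decreasing $f$ is decomposed into its boundary value $f(\infty)\mathbf{1}$ plus the Hardy integral $\int_x^\infty (-f')$.
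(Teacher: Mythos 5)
Your algebraic reduction is correct: the identities $\Phi^{-p}\phi^{1-p}=v$ and $\Phi^p\phi^{p-1}v=1$ do hold (using $(1-p)(1-p')=1$ and $pp'=p+p'$), so \eqref{Reduction.Theorem.eq.1.222} with $T$ replaced by $T_{\Phi^2}$ and $v$ by $\phi$ is indeed \eqref{IHI.H.1} after the substitution $h\mapsto h/\Phi$, and your direct argument for \eqref{IHI.H.1}$\Rightarrow$\eqref{RT.thm.main.3.eq.2} (take $h=2\Phi\phi f$, use $\Phi^2 f\le\int_0^x h$ for $f\in\M^{\dn}$, then apply (ii) and the companion identity) closes that direction using only properties (i) and (ii). Note the theorem is cited here from \cite{GogMusIHI}, so there is no in-paper proof to compare against; I evaluate your plan on its merits.

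The gap you flag in the converse direction is genuine, and neither proposed repair is convincing. Property (iii) for $T_{\Phi^2}$ reads $T(\Phi^2 f+\la\Phi^2)\le c\big(T(\Phi^2 f)+\la T(\Phi^2)\big)$, and $\la\Phi^2$ is not a constant multiple of $\mathbf 1$; the dyadic-staircase idea runs into the same obstacle, since (iii) of $T$ only permits peeling off a single constant shift. Your second idea is misdirected: in the direction \eqref{RT.thm.main.3.eq.2}$\Rightarrow$\eqref{IHI.H.1} the input is an arbitrary $h\in\M^+$, not a decreasing $f$, so there is no boundary value $f(\infty)\mathbf 1$ to split off. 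The clean fix is to bypass the necessary half of Theorem \ref{Reduction.Theorem.thm.3.2} altogether: given $h\in\M^+$ put $F(x)=\int_0^x h$ and $f(x):=\sup_{y\ge x}F(y)\Phi^{-2}(y)$, which is non-increasing and satisfies $F\le\Phi^2 f$; then (ii) and \eqref{RT.thm.main.3.eq.2} give $\|T(F)\|_{q,w,\I}\ls\|T_{\Phi^2}f\|_{q,w,\I}\ls\|f\|_{p,\phi,\I}$, and the bound $\|f\|_{p,\phi,\I}\ls\|h\|_{p,v,\I}$ follows from $f(x)\le F(x)\Phi^{-2}(x)+\int_x^\infty h\Phi^{-2}$ together with the weighted Hardy and Copson inequalities, whose Muckenhoupt constants are $\ap 1$ because the exponents of $\int_0^{\cdot} v^{1-p'}$ cancel exactly. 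This way only property (ii) is used and the missing (iii) for $T_{\Phi^2}$ never enters.
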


\begin{theorem}\cite[Theorem 3.11]{GogMusIHI}\label{RT.thm.main.8.0}
    Let $0 < \b \le \i$, and let $T: \M^+ \rw \M^+$ satisfies conditions {\rm (i)-(iii)}.  Assume that $u,\,w \in \W\I$
    and $v \in \W\I$ be such that $V(x) < \infty$ for all $x > 0$.
    Then inequality
    \begin{equation}    \label{RT.thm.main.8.0.eq.1}
    \left\|T \bigg(\int_0^x h\bigg)\right\|_{\b,w,\I} \leq c
    \,\|h\|_{1,V^{-1},\I}, \, h \in \M^+,
    \end{equation}
    holds iff
    \begin{equation}\label{RT.thm.main.8.0.eq.2}
    \| T_{V^2} f \|_{\b, w, \I} \le c \| f  \|_{1, v,\I}, \, f \in
    \M^{\dn}.
    \end{equation}
\end{theorem}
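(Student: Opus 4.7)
The plan is to establish the two implications in Theorem \ref{RT.thm.main.8.0} independently by exhibiting, in each direction, an explicit pointwise device that reduces one side of the equivalence to the other. Interestingly, only conditions (i) and (ii) of the operator $T$ appear to enter the argument, while condition (iii) is not needed.

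For the direction $(\ref{RT.thm.main.8.0.eq.2}) \Rightarrow (\ref{RT.thm.main.8.0.eq.1})$, given any $h \in \M^+$, I would set $H(x) := \int_0^x h$ and introduce the candidate
$$f(x) := \frac{H(x)}{V^2(x)} + \int_x^{\i} \frac{h(t)}{V^2(t)}\,dt.$$
A direct differentiation will give $f'(x) = -2 H(x) v(x) / V^3(x) \le 0$, so $f \in \M^{\dn}$, and by construction $V^2(x) f(x) \ge H(x)$ pointwise. Condition (ii) therefore yields $T(H) \le c\, T(V^2 f)$, and applying the restricted inequality (\ref{RT.thm.main.8.0.eq.2}) gives $\|T(H)\|_{q,w,\I} \ls \|f\|_{1,v,\I}$. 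Closing this direction requires the Fubini-type estimate
$$\|f\|_{1,v,\I} = \int_0^{\i} h(t) \left( \int_0^{\i} \frac{v(x)}{V^2(\max(x,t))}\,dx \right) dt \le 2 \|h\|_{1, V^{-1},\I},$$
which follows by splitting the inner integral at $x=t$ and using that the primitive of $v/V^2$ is $-1/V$.

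For the converse direction $(\ref{RT.thm.main.8.0.eq.1}) \Rightarrow (\ref{RT.thm.main.8.0.eq.2})$, given $f \in \M^{\dn}$, the monotonicity of $f$ alone yields
$$V^2(x) f(x) = f(x) \int_0^x 2 V(t) v(t)\,dt \le 2 \int_0^x V(t) v(t) f(t)\,dt.$$
Setting $\tilde h := Vvf$, conditions (i) and (ii) combine to give $T(V^2 f) \le 2c\, T\bigl(\int_0^x \tilde h \bigr)$, and the general inequality (\ref{RT.thm.main.8.0.eq.1}) with $h = \tilde h$ then produces
$$\|T(V^2 f)\|_{q,w,\I} \ls \|\tilde h\|_{1, V^{-1}, \I} = \int_0^{\i} V(t) v(t) f(t) / V(t)\,dt = \|f\|_{1, v, \I},$$
as the $V$ factors cancel, which is exactly (\ref{RT.thm.main.8.0.eq.2}).

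The principal obstacle I anticipate is locating the correct non-increasing majorant in the first direction: one needs a single $f \in \M^{\dn}$ that simultaneously dominates $H/V^2$ pointwise and has its $L^1(v)$ norm controlled by $\|h\|_{1, V^{-1}}$ with a constant independent of $h$. The candidate above, which is essentially the smallest such majorant, succeeds precisely because of the algebraic cancellation in the kernel $\int_0^{\i} v(x)/V^2(\max(x,t))\,dx = 2/V(t) - 1/V(\i)$, yielding the universal constant $2$; the converse direction is then routine once the Sawyer-type inequality $V^2 f \le 2 \int_0^x V v f$ is in hand.
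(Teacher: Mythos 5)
Your proof is correct, and both directions check out. The theorem is quoted in the present paper from the reference \cite{GogMusIHI} without a written-out proof, so there is no text in this paper to compare against; but the construction you give is clean, self-contained, and worth spelling out.

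The key device in the direction $\eqref{RT.thm.main.8.0.eq.2}\Rightarrow\eqref{RT.thm.main.8.0.eq.1}$ is the majorant
$f(x)=\int_0^\infty h(t)\,V(\max(x,t))^{-2}\,dt$, which you correctly write as $H(x)/V^2(x)+\int_x^\infty h/V^2$. Its monotonicity is immediate from the kernel (no need for the differentiation argument, since $V(\max(x,t))^{-2}$ is non-increasing in $x$), the pointwise bound $V^2f\ge H$ is trivial, and the Fubini computation $\int_0^\infty v(x)V(\max(x,t))^{-2}\,dx = 2/V(t)-1/V(\infty)\le 2/V(t)$ is exact. The converse direction rests on the Sawyer-type estimate $V^2(x)f(x)\le 2\int_0^x Vvf$ for $f\in\M^{\dn}$, after which the $V$-factors cancel exactly in $\|Vvf\|_{1,V^{-1}}=\|f\|_{1,v}$. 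Both reductions only invoke properties (i) and (ii) of $T$, so your remark that (iii) is not needed is accurate; the theorem as stated lists (i)--(iii) presumably for uniformity with the neighbouring reduction theorems (Theorems \ref{Reduction.Theorem.thm.3.1}--\ref{Reduction.Theorem.thm.3.4}), where (iii) really is required in the case $V(\infty)<\infty$. Your argument avoids that case distinction altogether because the kernel bound $\le 2/V(t)$ holds uniformly whether or not $V(\infty)$ is finite. One small implicit hypothesis: you need $V(x)>0$ for $x>0$ (equivalently $v\not\equiv 0$ near the origin) for $V^{-1}$ and $H/V^2$ to make sense; this is the standard non-degeneracy assumption in the paper and does not affect the substance of the proof.
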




\section{Supremal operators on the cone of monotone functions}\label{s.3}

In this section, we give complete characterization of  inequalities \eqref{supr.ineq.1} - \eqref{supr.ineq.4}.

To state the next statements we need the following notations:
$$
\overline{u}(t) : = \sup_{0 < \tau \le t} u(\tau), \qquad  \underline{u}(t) : = \sup_{t \le \tau < \infty} u(\tau), \qquad (t>0).
$$
For a given weight $v$, $0 \le a < b \le \infty$ and $1 \le p < \infty$, we denote
$$
\s_p (a,b) = \begin{cases}
\bigg ( \int\limits_a^b [v(t)]^{1-p'}dt\bigg)^{1 / p'} & \qquad \mbox{when} ~ 1 < p < \infty \\
\esup\limits_{a < t < b} \, [v(t)]^{-1} & \qquad \mbox{when} ~ p = 1.
\end{cases}
$$

Recall the following theorem.
\begin{theorem}\cite[Theorems 4.1 and 4.4]{gop}\label{supr.thm.101}
    Let $1 \le p < \infty$, $0 < q < \infty$ and let $u \in \W\I \cap C\I$. Assume that $v,\,w \in \W\I$
    be such that
    $$
    0 < V(x) < \i \qquad \mbox{and} \qquad  0 < W(x) < \infty \qq \mbox{for all} \qq x > 0.
    $$
    Then inequality \eqref{ISI.4}
    is satisfied with the best constant $c$ if and only if:

    {\rm (i)} $p \le q$, and in this case $c \ap A_1$, where
    $$
    A_1: = \sup_{x > 0}\bigg( [\underline{u}]^q(x)  W(x) + \int_x^{\infty}  [\underline{u}]^q(t) w(t)\,dt\bigg)^{1 / q}\s_p(0,x);
    $$

    {\rm (ii)} $q < p$,  and in this case $c \ap B_1 + B_2$, where
    \begin{align*}
    B_1: & = \bigg(\int_0^{\infty} \bigg(\int_x^{\infty}  [\underline{u}]^q(t)  w(t)\,dt\bigg)^{r / p} [\underline{u}]^q(x) \bigg[\s_p(0,x)\bigg]^r w(x)\,dx \bigg)^{1 / r}, \\
    B_2: & = \bigg(\int_0^{\infty} W^{r / p}(x) \bigg[\sup_{x \le \tau < \infty} \underline{u}(\tau) \s_p (0,\tau) \bigg]^r  w(x)\,dx \bigg)^{1 / r}.
    \end{align*}
\end{theorem}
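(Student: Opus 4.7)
The plan is to reduce inequality \eqref{ISI.4} to a weighted Hardy-type inequality amenable to classical Muckenhoupt--Sawyer theory, via a monotonization step on the weight $u$.

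\emph{Monotonization.} Since $Hh\in\M^\up$ for every $h\in\M^+$, the pointwise identity $S_u^*(Hh)=S_{\underline u}^*(Hh)$ holds. Indeed, for any $f\in\M^\up$ and any $\tau\ge t$,
$$
\underline u(\tau)f(\tau)=\esup_{\sigma\ge\tau}u(\sigma)f(\tau)\le\esup_{\sigma\ge\tau}u(\sigma)f(\sigma)\le S_u^*f(t),
$$
while the reverse $S_{\underline u}^*f\ge S_u^*f$ is immediate from $\underline u\ge u$. Thus $u$ may be replaced by the non-increasing envelope $\underline u$ throughout.

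\emph{Necessity.} I would test \eqref{ISI.4} with $h_x=v^{1-p'}\chi_{(0,x)}$ when $1<p<\i$ (and a suitable $L^\i$-type substitute when $p=1$). A direct computation gives $Hh_x(\tau)=\s_p(0,\min(\tau,x))^{p'}$, which is constant equal to $\s_p(0,x)^{p'}$ on $[x,\i)$; consequently
$$
S_u^*(Hh_x)(t)\ge\s_p(0,x)^{p'}\underline u(x)\ \text{for}\ t<x,\qquad S_u^*(Hh_x)(t)=\s_p(0,x)^{p'}\underline u(t)\ \text{for}\ t\ge x.
$$
Therefore $\|S_u^*(Hh_x)\|_{q,w,\I}\gs\s_p(0,x)^{p'}([\underline u(x)]^qW(x)+\int_x^\i[\underline u]^qw)^{1/q}$, and since $\|h_x\|_{p,v,\I}=\s_p(0,x)^{p'/p}$, dividing and taking the supremum over $x$ yields $A_1\ls c$ in the case $p\le q$. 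For $q<p$, iterating this test family along a dyadic sequence $\{\tau_k\}$ on which $\sup_{\tau\ge\tau_k}\underline u(\tau)\s_p(0,\tau)$ doubles produces both $B_1$ and $B_2$ separately.

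\emph{Sufficiency.} Starting from the splitting $\int_0^\tau h=\int_0^t h+\int_t^\tau h$ and interchanging the $\esup$ with the tail integral gives the pointwise estimate
$$
S_{\underline u}^*(Hh)(t)\le\underline u(t)Hh(t)+\int_t^\i\underline u\,h,
$$
so $\|S_u^*(Hh)\|_{q,w,\I}$ is dominated by a weighted Hardy norm and a weighted Copson norm. \emph{The main obstacle} is then matching the resulting sufficient conditions with $A_1$ (respectively $B_1+B_2$): the naive Muckenhoupt (or Sawyer) condition for the Copson part is a priori strictly stronger than the pointwise factor $[\underline u(x)]^qW(x)$ appearing in $A_1$ (respectively the integral $B_2$ in the $q<p$ case), and closing this gap requires a dyadic/stopping-time argument exploiting the ``peak'' structure of $\underline u(\tau)Hh(\tau)$---a product of a non-increasing and a non-decreasing function, whose running supremum is concentrated on an at most countable set of stopping times---so as to reabsorb the spurious slack and recover exactly the claimed characterization. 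This matching is especially delicate in the $q<p$ regime, where $B_1$ and $B_2$ must each be produced individually, neither being absorbed into the other.
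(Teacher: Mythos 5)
The paper does not prove this theorem: it is recalled verbatim from \cite[Theorems 4.1 and 4.4]{gop} and used as a black box, so there is no internal proof to compare against. Judging your attempt on its own, the monotonization step (replacing $u$ by $\underline u$, legitimate because $Hh\in\M^{\up}$) and the necessity test with $h_x=v^{1-p'}\chi_{(0,x)}$ are both correct and give $A_1\ls c$ for $p\le q$.

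The sufficiency part, however, contains a genuine gap which you acknowledge but do not close, and which is not a routine finishing step. Your pointwise bound
$$
S_{\underline u}^*(Hh)(t)\le \underline u(t)\,Hh(t)+\int_t^\infty \underline u\,h
$$
is valid but strictly overestimates the operator. Feeding the two summands to the classical Hardy and Copson characterizations (say for $1<p\le q$) yields the pair of conditions
$$
\sup_{x>0}\Big(\int_x^\infty [\underline u]^q w\Big)^{1/q}\s_p(0,x)<\infty
\qquad\text{and}\qquad
\sup_{x>0}\,W(x)^{1/q}\Big(\int_x^\infty \underline u^{\,p'}v^{1-p'}\Big)^{1/p'}<\infty .
$$
The first is dominated by $A_1$, but the second is not: $A_1$ only controls $\underline u(x)W(x)^{1/q}\s_p(0,x)$, with $\underline u$ evaluated at the single point $x$ and the dual integral taken over $(0,x)$, whereas the Copson condition involves $\underline u^{p'}v^{1-p'}$ integrated over $(x,\infty)$. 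These are not comparable. Concretely, taking $u=\underline u$ with $\underline u(x)\sim(\log x)^{-1}$, $v^{1-p'}(x)\sim(\log x)^{p'-1}/x$ for large $x$, and $w$ a bump concentrated near a fixed $x_0$, one checks that $\underline u(x)\s_p(0,x)$ stays bounded, hence $A_1<\infty$, while $\int_x^\infty \underline u^{\,p'}v^{1-p'}\sim\int_x^\infty\frac{dy}{y\log y}=\infty$, so the Copson piece is unbounded. Thus the decomposition loses information, and the ``dyadic/stopping-time argument to reabsorb the spurious slack'' that you invoke is not a patch but is precisely the hard content of the theorem. The proof in \cite{gop} does not proceed by splitting $S_u^*\circ H$ into Hardy plus Copson; it discretizes $w$ into blocks governed jointly by $\underline u$ and the level structure of the running integral, estimating the supremal operator directly on each block rather than dominating it pointwise. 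As written, your argument establishes the necessity of $A_1$ (and, modulo the sketched dyadic iteration, of $B_1+B_2$), but not sufficiency.
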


Using change of variables $x = 1/t$, we can easily obtain the following statement.
\begin{theorem}\label{supr.thm.111}
    Let $1 \le p < \infty$, $0 < q < \infty$ and let $u \in \W\I \cap C\I$. Assume that $v,\,w \in \W\I$
    be such that
    $$
    0 < V_*(x) < \i \qquad \mbox{and} \qquad  0 < W_*(x) < \infty \qq \mbox{for all} \qq x > 0.
    $$
    Then inequality \eqref{ISI.2}
    is satisfied with the best constant $c$ if and only if:

    {\rm (i)} $p \le q$, and in this case $c \ap A_1$, where
    $$
    A_1: = \sup_{x > 0}\bigg( [\overline{u}]^q(x) W_*(x) + \int_0^x  [\overline{u}]^q(t)  w(t)\,dt\bigg)^{1 / q}\s_p(x,\infty);
    $$

    {\rm (ii)} $q < p$,  and in this case $c \ap B_1 + B_2$, where
    \begin{align*}
    B_1: & = \bigg(\int_0^{\infty} \bigg(\int_0^x  [\overline{u}]^q(t) w(t)\,dt\bigg)^{r / p} [\overline{u}]^q(x) \bigg[\s_p(x,\infty)\bigg]^r w(x)\,dx \bigg)^{1 / r}, \\
    B_2: & = \bigg(\int_0^{\infty} W_*^{r / p}(x) \bigg[\sup_{0 < \tau \le x} \overline{u}(\tau) \s_p (\tau,\infty) \bigg]^r  w(x)\,dx \bigg)^{1 / r}.
    \end{align*}
\end{theorem}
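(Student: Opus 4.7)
The plan is to deduce Theorem \ref{supr.thm.111} from Theorem \ref{supr.thm.101} by the involution $x \mapsto 1/x$. For $g \in \M^+\I$, write $\tilde{g}(s) := g(1/s)$, and for a density $h \in \M^+\I$ put $\tilde{h}(\sigma) := h(1/\sigma)/\sigma^2$; this map is a bijection of $\M^+\I$ onto itself, and the substitution $y = 1/\sigma$ gives
$$
\int_x^{\i} h(y)\,dy = \int_0^{1/x} \tilde{h}(\sigma)\,d\sigma, \qquad x \in \I.
$$

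First I would check that the involution exchanges the two sides of \eqref{ISI.2} and \eqref{ISI.4}. With $\tilde{u}(s) := u(1/s)$, the substitution $\tau = 1/\sigma$ inside the supremum, combined with the identity above, yields
$$
\bigg[S_u\bigg(\int_x^{\i} h\bigg)\bigg](1/t) = \esup_{\sigma \ge t} \tilde{u}(\sigma) \int_0^{\sigma} \tilde{h} = \bigg[S_{\tilde{u}}^{*}\bigg(\int_0^x \tilde{h}\bigg)\bigg](t).
$$
Next, with $\tilde{w}(t) := w(1/t)/t^2$ and $\tilde{v}(\sigma) := \sigma^{2p-2}\,v(1/\sigma)$, the Jacobian relation $h(1/\sigma) = \sigma^2 \tilde{h}(\sigma)$ together with $dy = -d\sigma/\sigma^2$ gives $\|h\|_{p,v,\I} = \|\tilde{h}\|_{p,\tilde{v},\I}$ and, analogously, $\|S_u(\int_x^{\i} h)\|_{q,w,\I} = \|S_{\tilde{u}}^{*}(\int_0^x \tilde{h})\|_{q,\tilde{w},\I}$. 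Hence inequality \eqref{ISI.2} for $(u,v,w)$ is equivalent to inequality \eqref{ISI.4} for $(\tilde{u},\tilde{v},\tilde{w})$, and Theorem \ref{supr.thm.101} applied to the tilded triple supplies a characterization.

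Finally, I would translate the resulting conditions back. The algebraic identity $(p-1)(1-p') = -1$ (with the natural adjustment when $p = 1$, where $\tilde{v}(\sigma) = v(1/\sigma)$) yields $\tilde{v}^{1-p'}(t) = t^{-2}\,v^{1-p'}(1/t)$, so the substitution $s = 1/t$ gives $\tilde{\s}_p(0,x) = \s_p(1/x,\i)$; the same substitution produces $\tilde{W}(x) = W_*(1/x)$ and $\underline{\tilde{u}}(x) = \overline{u}(1/x)$. Plugging these into the expressions $A_1$, $B_1$, $B_2$ of Theorem \ref{supr.thm.101} formed with the tilded triple, and then changing the outer variable $x \mapsto 1/x$ once more, produces exactly the $A_1$, $B_1$, $B_2$ stated in Theorem \ref{supr.thm.111}. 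The main obstacle is purely bookkeeping — verifying that each of the iterated $L^r$ conditions $B_1$, $B_2$ transforms precisely — but this rests solely on the observation $(p-1)(1-p') = -1$, which ensures that the Jacobian $t^{-2}$ thrown off by the substitution is absorbed by $\tilde{v}^{1-p'}$; after that identity is in hand, no further analytic work is required.
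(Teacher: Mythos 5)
Your proposal is correct and coincides with the paper's own proof: both reduce Theorem \ref{supr.thm.111} to Theorem \ref{supr.thm.101} via the substitution $x\mapsto 1/x$, with the same tilded weights $\tilde u(t)=u(1/t)$, $\tilde w(t)=w(1/t)/t^2$, $\tilde v(t)=v(1/t)\,t^{2p-2}$, and then translate the conditions back. You have merely spelled out the bookkeeping (the identity $(p-1)(1-p')=-1$ absorbing the Jacobian) that the paper summarizes as "applying substitution of variables mentioned above three times."
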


\begin{proof}
Obviously, inequality \eqref{ISI.2}
is satisfied with the best constant $c$ if and only if
\begin{equation}\label{IHI.1.1.1.1}
\left\|S_{\tilde{u}}^* \bigg( \int_0^x h\bigg)\right\|_{q,\tilde{w},\I} \leq c
\,\|h\|_{p,\tilde{v},\I},~ h \in \M^+
\end{equation}
holds, where
$$
\tilde{u} (t) = u \bigg(\frac{1}{t}\bigg), ~ \tilde{w} (t) = w \bigg(\frac{1}{t}\bigg)\frac{1}{t^2}, ~\tilde{v} (t) = v \bigg(\frac{1}{t}\bigg)\bigg(\frac{1}{t^2}\bigg)^{1-p}, ~ t > 0.
$$
Using Theorem \ref{supr.thm.101}, and then applying substitution of variables mentioned above three times, we get the statement.
\end{proof}

\begin{theorem}\label{supr.thm.11}
    Let $0 < p,\,q < \infty$ and let $u \in \W\I \cap C\I$. Assume that $v,\,w \in \W\I$
    be such that $0 < V_*(x) < \infty$ and $0 < W_*(x) < \infty$ for all $x > 0$. Then inequality \eqref{supr.ineq.1} is satisfied with the best constant $c$ if and only if:

    {\rm (i)} $p \le q$, and in this case
    $c \ap A_1 + \| S_u ({\bf 1})\|_{q,w,\I} / \|{\bf 1}\|_{s,v,\I}$, where
    $$
    A_1: = \sup_{x > 0}\bigg( [\overline{u}]^q(x) W_*(x) + \int_0^x  [\overline{u}]^q(t)  w(t)\,dt\bigg)^{1 / q}V^{- 1 / p}(x);
    $$

    {\rm (ii)} $q < p$, and in this case
    $c \ap B_1 + B_2 + \| S_u ({\bf 1})\|_{q,w,\I} / \|{\bf 1}\|_{s,v,\I}$, where
    \begin{align*}
    B_1: & = \bigg(\int_0^{\infty} \bigg(\int_0^x  [\overline{u}]^q(t) w(t)\,dt\bigg)^{r / p} [\overline{u}]^q(x) V^{- r / p}(x) w(x)\,dx \bigg)^{1 / r}, \\
    B_2: & = \bigg(\int_0^{\infty} W_*^{r / p}(x) \bigg[\sup_{0 < \tau \le x} \overline{u}(\tau)V^{- 1 / p}(\tau)\bigg]^{r}  w(x)\,dx \bigg)^{1 / r}.
    \end{align*}
\end{theorem}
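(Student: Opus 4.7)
The plan is to reduce the cone inequality \eqref{supr.ineq.1} to an inequality of the form \eqref{ISI.2} by means of Theorem \ref{Reduction.Theorem.thm.3.1}, and then to invoke the already-established Theorem \ref{supr.thm.111}. First I would verify that $T:=S_u$ satisfies the structural conditions (i)--(iii) preceding that reduction theorem: positive homogeneity and monotonicity are evident, and the sublinearity $S_u(f+\la{\bf 1})\le S_u f+\la S_u{\bf 1}$ for $\la\ge 0$ is a direct consequence of the subadditivity of the essential supremum.

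With (i)--(iii) in hand, Theorem \ref{Reduction.Theorem.thm.3.1} (valid for $1\le p<\i$) equates \eqref{supr.ineq.1} with the conjunction of
\begin{equation*}
\bigg\|S_u\bigg(\int_x^\i h\bigg)\bigg\|_{q,w,\I}\le c\,\|h\|_{p,\tilde v,\I},\qquad h\in\M^+,
\end{equation*}
where $\tilde v:=V^p v^{1-p}$, together with the constant-function condition $\|S_u{\bf 1}\|_{q,w,\I}\le c\,\|{\bf 1}\|_{p,v,\I}$. The first is precisely \eqref{ISI.2} with $v$ replaced by $\tilde v$, and the second accounts for the summand $\|S_u{\bf 1}\|_{q,w,\I}/\|{\bf 1}\|_{p,v,\I}$ in the conclusion; the latter is vacuous when $V(\i)=\i$, which is why a single formulation suffices in both regimes.

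The next step is to apply Theorem \ref{supr.thm.111} to the reduced inequality, the only nontrivial calculation being the identification of $\s_p(x,\i;\tilde v)$. For $1<p<\i$, the identities $(1-p)(1-p')=1$ and $p(1-p')=-p'$ give $\tilde v^{1-p'}=V^{-p'}v$, and the substitution $y=V(t)$ yields
\begin{equation*}
\int_x^\i \tilde v^{1-p'}(t)\,dt=\frac{V(x)^{1-p'}-V(\i)^{1-p'}}{p'-1}.
\end{equation*}
In the principal case $V(\i)=\i$ this simplifies to $\ap V(x)^{1-p'}$, whence $\s_p(x,\i;\tilde v)\ap V(x)^{(1-p')/p'}=V(x)^{-1/p}$; the case $p=1$ is analogous, with $\tilde v=V$ and $\s_1(x,\i;\tilde v)=\esup_{t>x}V(t)^{-1}=V(x)^{-1}$. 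Substituting this equivalence into the expressions for $A_1$, $B_1$, $B_2$ furnished by Theorem \ref{supr.thm.111} produces exactly the quantities asserted in Theorem \ref{supr.thm.11}.

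The main obstacle I foresee is the range $0<p<1$, which lies outside the scope of Theorem \ref{Reduction.Theorem.thm.3.1}. Here I would invoke the alternative reduction theorem for $0<p\le 1$ in the spirit of \cite{gogpick2007}, or argue directly using the representation $f(x)=\int_x^\i h(s)\,ds+f(\i)$ of $f\in\M^{\dn}$ together with the sublinearity of $S_u$. A secondary technical point concerns the case $V(\i)<\i$, where the boundary term $V(\i)^{1-p'}$ survives in the formula above; absorbing this deviation into the constant-function summand $\|S_u{\bf 1}\|_{q,w,\I}/\|{\bf 1}\|_{p,v,\I}$ is what allows the single expression $V(x)^{-1/p}$ to appear uniformly in $A_1$, $B_1$, and $B_2$.
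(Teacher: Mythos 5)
Your plan is sound for $1\le p<\infty$ and gives the correct answer there, but it is not the paper's argument and it leaves a genuine hole in exactly the regime you flag yourself, namely $0<p<1$, which is part of the stated range. You correctly observe that Theorem \ref{Reduction.Theorem.thm.3.1} requires $1\le p<\infty$, and you propose to patch this by invoking an unstated ``alternative reduction theorem in the spirit of \cite{gogpick2007}'' or a direct decomposition $f=\int_x^\infty h+f(\infty)$. Neither is carried out, so as written the proof does not cover the theorem as stated.

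The idea you are missing is the elementary dilation-by-powers step that the paper uses at the outset: since $S_u$ acts by taking suprema, one has pointwise $[S_u f]^p = S_{u^p}(f^p)$, and $f\mapsto f^p$ is a bijection of $\M^{\dn}$ onto itself. Hence \eqref{supr.ineq.1} is equivalent to
\begin{equation*}
\|S_{u^p}(g)\|_{q/p,w,\I}\le c^p\,\|g\|_{1,v,\I},\qquad g\in\M^{\dn},
\end{equation*}
which is the same problem with $p$ replaced by $1$ and $q$ replaced by $q/p$. Since Theorem \ref{Reduction.Theorem.thm.3.1} allows $0<q\le\infty$ and only requires $1\le p<\infty$, it applies to this reduced inequality for \emph{every} $0<p<\infty$, with no case split. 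A further bonus is that the weight entering the reduced dual inequality is simply $V^1 v^{1-1}=V$, so $\s_1(x,\infty;V)=\esup_{t>x}V(t)^{-1}=V(x)^{-1}$ on the nose, with no power algebra, no change of variable, and no boundary term $V(\infty)^{1-p'}$ to absorb. One then applies Theorem \ref{supr.thm.111} with the substitution $u\mapsto u^p$, $q\mapsto q/p$, $p\mapsto 1$, $v\mapsto V$, and uses $\overline{u^p}=[\overline{u}]^p$ to land exactly on the stated $A_1,B_1,B_2$ after taking $p$-th roots.

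So the difference is not just stylistic: your route forces a separate treatment of $0<p<1$ and an extra argument to dispose of the $V(\infty)<\infty$ boundary term, while the power trick collapses both issues at once. I would also record explicitly that $S_u$ satisfies condition (iii) with constant $1$, since $S_u(f+\la\mathbf 1)\le S_u f+\la S_u\mathbf 1$; you assert it but the reduction theorem's hypotheses deserve one line of verification.
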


\begin{proof}
    It is easy to see that inequality \eqref{supr.ineq.1} holds if and only if
    \begin{equation}\label{supr.ineq.thm.11.eq.1}
    \| S_{u^p} (f)\|_{q/p,w,\I} \le c^p\|f\|_{1,v,\I},~ f \in \M^{\dn}
    \end{equation}
    holds. By Theorem \ref{Reduction.Theorem.thm.3.1}, \eqref{supr.ineq.thm.11.eq.1} holds iff both
    \begin{equation}\label{supr.ineq.thm.11.eq.2}
    \bigg\| S_{u^p} \bigg(\int_x^{\infty} h\bigg) \bigg\|_{q/p,w,\I} \le c^p\|h\|_{1,V,\I},~ h \in \M^+,
    \end{equation}
    and
    \begin{equation}\label{supr.ineq.thm.11.eq.3}
    \| S_u ({\bf 1})\|_{q,w,\I} \le c\|{\bf 1}\|_{s,v,\I}
    \end{equation}
    hold. In order to complete the proof, it remains to apply Theorem \ref{supr.thm.111}.
\end{proof}

Using change of variables $x = 1/t$, we can easily obtain the following "dual" statement.

\begin{theorem}\label{supr.thm.12}
    Let $0 < p,\,q < \infty$ and let $u \in \W\I \cap C\I$. Assume that $v,\,w \in \W\I$
    be such that $0 < V(x) < \infty$ and $0 < W(x) < \infty$ for all $x > 0$. Then \eqref{supr.ineq.2} is satisfied with the best constant $c$ if and only if:

    {\rm (i)} $p \le q$, and in this case
    $c \ap A_1 + \| S_u^* ({\bf 1})\|_{q,w,\I} / \|{\bf 1}\|_{s,v,\I}$, where
    $$
    A_1: = \sup_{x > 0}\bigg( [\underline{u}]^q(x) W(x) + \int_x^{\infty}  [\underline{u}]^q(t)  w(t)\,dt\bigg)^{1 / q}V_*^{- 1 / p}(x);
    $$

    {\rm (ii)} $q < p$, and in this case
    $c \ap B_1 + B_2 + \| S_u^* ({\bf 1})\|_{q,w,\I} / \|{\bf 1}\|_{s,v,\I}$, where
    \begin{align*}
    B_1: & = \bigg(\int_0^{\infty} \bigg(\int_x^{\infty}  [\underline{u}]^q(t) w(t)\,dt\bigg)^{r / p} [\underline{u}]^q(x) V_*^{- r / p}(x) w(x)\,dx \bigg)^{1 / r}, \\
    B_2: & = \bigg(\int_0^{\infty} W^{r / p}(x) \bigg[\sup_{x \le \tau < \infty} \underline{u}(\tau)V_*^{-1 / p}(\tau)\bigg]^{r}  w(x)\,dx \bigg)^{1 / r}.
    \end{align*}
\end{theorem}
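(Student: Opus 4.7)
The plan is to deduce Theorem~\ref{supr.thm.12} from Theorem~\ref{supr.thm.11} via the involutive substitution $x \leftrightarrow 1/x$, mirroring exactly the device used in the proof of Theorem~\ref{supr.thm.111}. Given the triple $(u,v,w)$ in the hypotheses of Theorem~\ref{supr.thm.12}, I would introduce the transformed weights and test function
$$
\tilde u(t) := u(1/t), \qquad \tilde v(t) := v(1/t)/t^{2}, \qquad \tilde w(t) := w(1/t)/t^{2}, \qquad \tilde f(t) := f(1/t).
$$
Then $f \in \M^{\up}$ if and only if $\tilde f \in \M^{\dn}$, and a direct change of variable $s = 1/\tau$ yields the three identities
$$
(S_u^* f)(t) = (S_{\tilde u}\tilde f)(1/t), \qquad \|f\|_{p,v,\I} = \|\tilde f\|_{p,\tilde v,\I}, \qquad \|S_u^* f\|_{q,w,\I} = \|S_{\tilde u}\tilde f\|_{q,\tilde w,\I}.
$$
Consequently the inequality \eqref{supr.ineq.2} for $(u,v,w)$ on $\M^{\up}$ is equivalent, with the same best constant, to the instance of \eqref{supr.ineq.1} for $(\tilde u,\tilde v,\tilde w)$ on $\M^{\dn}$.

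Next I would apply Theorem~\ref{supr.thm.11} to the transformed inequality. Its hypothesis $0<\tilde V_*(x),\tilde W_*(x)<\infty$ translates under $y=1/x$ into precisely the assumed $0<V(y),W(y)<\infty$. The theorem then supplies the characterising quantities $\tilde A_1$ (when $p\le q$) and $\tilde B_1,\tilde B_2$ (when $q<p$), together with the residual term $\|S_{\tilde u}({\bf 1})\|_{q,\tilde w,\I}/\|{\bf 1}\|_{s,\tilde v,\I}$, which is immediately seen to equal the residual term of Theorem~\ref{supr.thm.12} by applying the three identities above to $f\equiv{\bf 1}$.

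The bookkeeping that remains is to match the $\tilde{\phantom{A}}$-quantities to the unadorned $A_1,B_1,B_2$ of Theorem~\ref{supr.thm.12}. The key elementary identities are
$$
\overline{\tilde u}(x) = \underline u(1/x), \qquad \tilde V(x) = V_*(1/x), \qquad \tilde W_*(x) = W(1/x).
$$
Performing $y=1/x$ in the outer integrations (whose Jacobian $1/y^{2}$ absorbs the $1/x^{2}$ factor in $\tilde w$) and $s=1/t$ or $\sigma=1/\tau$ in the inner integrals and suprema converts each $\tilde{\phantom{A}}$-quantity into its counterpart stated in Theorem~\ref{supr.thm.12}.

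The only genuine obstacle is the inner supremum inside $B_2$, where the range is the complementary half-line. Concretely, $\sup_{0<\tau\le x}\overline{\tilde u}(\tau)\tilde V^{-1/p}(\tau)$ must be shown to equal $\sup_{1/x\le\sigma<\infty}\underline u(\sigma)V_*^{-1/p}(\sigma)$; a further $y=1/x$ in the outer integral then delivers the expression given in the theorem. Once this and the analogous manipulation for $B_1$ are verified, the preservation of the best constant finishes the proof.
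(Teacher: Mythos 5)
Your proposal is correct and is essentially identical to the paper's own proof: both reduce \eqref{supr.ineq.2} to \eqref{supr.ineq.1} via the substitution $x \mapsto 1/x$, identifying the transformed triple $(\tilde u,\tilde v,\tilde w)$ exactly as you write it, and then invoke Theorem~\ref{supr.thm.11}; the bookkeeping identities you list (e.g.\ $\overline{\tilde u}(x)=\underline u(1/x)$, $\tilde V(x)=V_*(1/x)$, $\tilde W_*(x)=W(1/x)$) are the ones implicitly used in the paper's ``applying substitution of variables three times.''
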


\begin{proof}
It is easy to see that inequality \eqref{supr.ineq.2} is satisfied with the best constant $c$ if and only if
\begin{equation}\label{IHI.1.1.1.1.1}
\|S_{\tilde{u}} f \|_{q,\tilde{w},\I} \leq c
\,\|f\|_{p,\tilde{v},\I},~ f \in \M^{\dn}
\end{equation}
holds, where
$$
\tilde{u} (t) = u \bigg(\frac{1}{t}\bigg), ~ \tilde{w} (t) = w \bigg(\frac{1}{t}\bigg)\frac{1}{t^2}, ~\tilde{v} (t) = v \bigg(\frac{1}{t}\bigg)\frac{1}{t^2}, ~ t > 0.
$$
Using Theorem \ref{supr.thm.11}, and then applying substitution of variables mentioned above three times, we get the statement.
\end{proof}

\begin{theorem}\label{supr.thm.23}
    Let $0 < p,\,q < \infty$ and let $u \in \W\I \cap C\I$. Assume that $v,\,w \in \W\I$
    be such that $0 < V_*(x) < \infty$ and $0 < W_*(x) < \infty$ for all $x > 0$.
    Then \eqref{supr.ineq.3} is satisfied with the best constant $c$ if and only if:

    {\rm (i)} $p \le q$, and in this case $c \ap A_1 + \| S_u ({\bf 1}) \|_{q,w,(0,\i)} / \| {\bf 1} \|_{p,v,(0.\i)}$, where
    $$
    A_1: = \sup_{x > 0}\bigg( \bigg[\sup_{0 < \tau \le x} \frac{u(\tau)^p}{V_*(\tau)^2}\bigg]^{q / p} W_*(x) + \int_0^x  \bigg[\sup_{0 < \tau \le t} \frac{u(\tau)^p}{V_*(\tau)^2}\bigg]^{q / p} w(t)\,dt\bigg)^{1 / q}[V_*]^{ 1 / p}(x);
    $$

    {\rm (ii)} $0 < q < p < \infty$,  and in this case $c \ap A_2 + \| S_u ({\bf 1}) \|_{q,w,(0,\i)} / \| {\bf 1} \|_{p,v,(0.\i)}$, where
    \begin{align*}
    B_1: & = \bigg(\int_0^{\infty} \bigg(\int_0^x  \bigg[\sup_{0 < \tau \le t} \frac{u(\tau)^p}{V_*(\tau)^2}\bigg]^{q / p} w(t)\,dt\bigg)^{r / p} [V_*]^{- r / p}(x) \bigg[\sup_{0 < \tau \le x} \frac{u(\tau)^p}{V_*(\tau)^2}\bigg]^{q / p}  w(x)\,dx \bigg)^{1 / r}, \\
    B_2: & = \bigg(\int_0^{\infty} W_*^{r / p}(x) \bigg[\sup_{0 < \tau \le x} \bigg[\sup_{0 < \tau \le t} \frac{u(\tau)^p}{V_*(\tau)^2}\bigg] V_*(\tau)\bigg]^{r / p}  w(x)\,dx \bigg)^{1 / r}.
    \end{align*}
\end{theorem}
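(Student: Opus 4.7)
The plan is to follow the pattern that established Theorems \ref{supr.thm.11} and \ref{supr.thm.12}, combining one of the monotone-cone reduction theorems of Section \ref{pre} with the characterization of \eqref{ISI.2} given by Theorem \ref{supr.thm.111}.

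I first raise \eqref{supr.ineq.3} to the $p$-th power: since $f\mapsto f^p$ is a bijection of $\M^{\up}$ onto itself and $(S_u f)^p=S_{u^p}(f^p)$, the inequality is equivalent to
\begin{equation*}
\|S_{u^p}F\|_{q/p,w,\I}\le c^p\,\|F\|_{1,v,\I},\qquad F\in\M^{\up}.
\end{equation*}
The operator $T:=S_{u^p}$ satisfies the structural conditions (i), (ii), (iii) (the last with constant $1$, by subadditivity of the supremum). Hence Theorem \ref{Reduction.Theorem.thm.3.4}, applied with its exponent $p$ taken equal to $1$ so that $v^{1-p}\equiv\mathbf{1}$, yields the equivalent reduced inequality
\begin{equation*}
\bigg\|S_{u^p}\bigg(\frac{1}{V_*^2(x)}\int_x^{\i}h(s)V_*(s)\,ds\bigg)\bigg\|_{q/p,w,\I}\le c^p\int_0^{\i}h(s)\,ds,\qquad h\in\M^+.
\end{equation*}

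I then substitute $g:=hV_*$ (so that $\|h\|_{1,\I}=\|g\|_{1,V_*^{-1},\I}$ and $\int_x^{\i}hV_*=\int_x^{\i}g$) and write $\phi(\tau):=u^p(\tau)/V_*^2(\tau)$. The inequality becomes
\begin{equation*}
\|S_{\phi}(H^* g)\|_{q/p,w,\I}\le c^p\,\|g\|_{1,V_*^{-1},\I},\qquad g\in\M^+,
\end{equation*}
which is precisely \eqref{ISI.2} with parameter substitutions $(u,p,q,v)\leftarrow(\phi,1,q/p,V_*^{-1})$. Theorem \ref{supr.thm.111} then supplies the characterization: because $V_*$ is non-increasing and continuous, $\s_1(x,\i)=\esup_{t>x}V_*(t)=V_*(x)$, and the quantity $\overline{\phi}(x)=\sup_{0<\tau\le x}u^p(\tau)/V_*^2(\tau)$ appears as the weighted running supremum in the output. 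Case (i), namely $1\le q/p$, produces $c^p$ comparable to an expression $A_1'$; case (ii), namely $q/p<1$, produces $c^p\approx B_1'+B_2'$. Extracting the $p$-th root and using the identity $p\widetilde r=r$, where $\widetilde r=q/(p-q)$ is the exponent supplied by Theorem \ref{supr.thm.111}, turns these into the $A_1$ and $B_1+B_2$ displayed in the statement.

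The extra summand $\|S_u(\mathbf{1})\|_{q,w,\I}/\|\mathbf{1}\|_{p,v,\I}$ is just the trivial necessary condition obtained by testing \eqref{supr.ineq.3} on the constant function $f\equiv 1$, recorded explicitly for completeness. The main technical subtlety lies in the invocation of Theorem \ref{supr.thm.111} with the weight $V_*^{-1}$: although $\int_x^{\i}V_*^{-1}$ may diverge, the conditions in the conclusion involve only $\s_1(x,\i)=V_*(x)$ and $W_*$, which remain finite under the hypotheses, so the characterization carries over; and careful bookkeeping of the $p$-th root together with the exponent identities is needed in order to exactly match the stated expressions.
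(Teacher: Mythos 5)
Your proposal is correct and follows essentially the same route as the paper: raise \eqref{supr.ineq.3} to the $p$-th power to reduce to a $\M^{\up}$ inequality with exponent $1$ on the right, invoke Theorem \ref{Reduction.Theorem.thm.3.4} for the operator $S_{u^p}$, absorb the $1/V_*^2$ factor into the weight of the supremal operator and substitute so that the reduced inequality becomes an instance of \eqref{ISI.2} with weight $V_*^{-1}$, and then apply Theorem \ref{supr.thm.111}. Your explicit bookkeeping of the substitution $g=hV_*$, the identification $\s_1(x,\infty)=V_*(x)$, and the exponent identity $p\widetilde r = r$ spells out steps the paper leaves implicit, but the ideas are identical; the caveat you flag about $\int_x^{\infty}V_*^{-1}$ possibly diverging is a real one which the paper itself does not discuss, and your observation that only $\s_1(x,\infty)$ and $W_*$ actually enter the final conditions is the right way to see why it does not derail the argument.
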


\begin{proof}
    It is easy to see that inequality \eqref{supr.ineq.3} holds if and only if
    \begin{equation}\label{supr.ineq.thm.23.eq.1}
    \| S_{u^p} (f)\|_{q/p,w,\I} \le c^p\|f\|_{1,v,\I},~ f \in \M^{\up}
    \end{equation}
    holds. By Theorem  \ref{Reduction.Theorem.thm.3.4} applied to the operator $S_{u^p}$, inequality \eqref{supr.ineq.thm.23.eq.1} is satisfied with the best constant $c$ if and only if
both
\begin{equation*}
\left\|  S_{ u^p / V_*^2} \bigg( \int_{x}^{\infty} h\bigg) \right\|_{q/p, w, \I} \le c \|h\|_{1,1 / V_*,\I}, \, h \in \M^+,
\end{equation*}
and
\begin{equation*}
\| S_u ({\bf 1}) \|_{q,w,(0,\i)} \le c \| {\bf 1} \|_{p,v,(0.\i)}
\end{equation*}
hold. It remains to apply Theorem \ref{supr.thm.111}.
\end{proof}

The following "dual" statement holds true.
\begin{theorem}\label{supr.thm.33}
    Let $0 < p,\,q < \infty$ and let $u \in \W\I \cap C\I$. Assume that $v,\,w \in \W\I$
    be such that $0 < V(x) < \infty$ and $0 < W(x) < \infty$ for all $x > 0$.
    Then \eqref{supr.ineq.4}
    is satisfied with the best constant $c$ if and only if:

    {\rm (i)} $p \le q$, and in this case $c \ap A_1 + \| S_u^* ({\bf 1}) \|_{q,w,(0,\i)} / \| {\bf 1} \|_{p,v,(0.\i)}$, where
    $$
A_1: = \sup_{x > 0}\bigg( \bigg[\sup_{x \le \tau < \infty} \frac{u(\tau)^p}{V(\tau)^2}\bigg]^{q / p} W(x) + \int_x^{\infty}  \bigg[\sup_{t \le \tau < \infty} \frac{u(\tau)^p}{V(\tau)^2}\bigg]^{q / p} w(t)\,dt\bigg)^{1 / q} V^{ 1 / p}(x);
$$

    {\rm (ii)} $q < p$,  and in this case $c \ap B_1 + B_2 + \| S_u^* ({\bf 1}) \|_{q,w,(0,\i)} / \| {\bf 1} \|_{p,v,(0.\i)}$, where
    \begin{align*}
    B_1: & = \bigg(\int_0^{\infty} \bigg(\int_x^{\infty}  \bigg[\sup_{t \le \tau < \infty} \frac{u(\tau)^p}{V(\tau)^2}\bigg]^{q / p} w(t)\,dt\bigg)^{r / p} V^{- r / p}(x) \bigg[\sup_{x \le \tau < \infty} \frac{u(\tau)^p}{V(\tau)^2}\bigg]^{q / p}  w(x)\,dx \bigg)^{1 / r}, \\
    B_2: & = \bigg(\int_0^{\infty} W^{r / p}(x) \bigg[\sup_{x \le \tau < \infty} \bigg[\sup_{t \le \tau < \infty} \frac{u(\tau)^p}{V(\tau)^2}\bigg] V(\tau)\bigg]^{r / p}  w(x)\,dx \bigg)^{1 / r}.
    \end{align*}\end{theorem}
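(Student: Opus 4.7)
The proof will follow the change-of-variables scheme already used in this section to deduce Theorem \ref{supr.thm.111} from Theorem \ref{supr.thm.101} and Theorem \ref{supr.thm.12} from Theorem \ref{supr.thm.11}. The plan is to reduce inequality \eqref{supr.ineq.4} to inequality \eqref{supr.ineq.3} by means of the substitution $x \mapsto 1/x$, and then to appeal to Theorem \ref{supr.thm.23}.

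Concretely, I set
$$
\tilde u(t) := u(1/t), \qquad \tilde v(t) := t^{-2} v(1/t), \qquad \tilde w(t) := t^{-2} w(1/t),
$$
and, for $f \in \M^{\dn}$, let $\tilde f(t) := f(1/t) \in \M^{\up}$. The substitution $\sigma = 1/\tau$ inside the essential supremum gives
$$
(S_u^* f)(x) = \esup_{x \le \tau < \infty} u(\tau) f(\tau) = \esup_{0 < \sigma \le 1/x} \tilde u(\sigma) \tilde f(\sigma) = (S_{\tilde u} \tilde f)(1/x),
$$
and a further substitution $x = 1/t$ in the outer integrals produces $\|S_u^* f\|_{q,w,\I} = \|S_{\tilde u} \tilde f\|_{q,\tilde w,\I}$ together with $\|f\|_{p,v,\I} = \|\tilde f\|_{p,\tilde v,\I}$. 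Hence \eqref{supr.ineq.4} for $(u,v,w)$ on $\M^{\dn}$ holds with best constant $c$ if and only if \eqref{supr.ineq.3} for $(\tilde u,\tilde v,\tilde w)$ on $\M^{\up}$ holds with the same best constant, and the hypotheses $0 < V(x), W(x) < \infty$ assumed here translate into the hypotheses $0 < \tilde V_*(x), \tilde W_*(x) < \infty$ required by Theorem \ref{supr.thm.23}.

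Next, apply Theorem \ref{supr.thm.23} to $(\tilde u,\tilde v,\tilde w)$. The cumulative weights transform as $\tilde V(x) = V_*(1/x)$, $\tilde V_*(x) = V(1/x)$, $\tilde W(x) = W_*(1/x)$, $\tilde W_*(x) = W(1/x)$, and one checks that $\sup_{0 < \tau \le t}\tilde u(\tau)^p/\tilde V_*(\tau)^2 = \sup_{\sigma \ge 1/t} u(\sigma)^p/V(\sigma)^2$. Performing the substitutions $y = 1/x$ on the outer variable, $\beta = 1/t$ on the inner integration variable, and $\sigma = 1/\tau$ on every supremum variable in each of $\tilde A_1$, $\tilde B_1$, $\tilde B_2$ produced by Theorem \ref{supr.thm.23} yields exactly the expressions $A_1$, $B_1$, $B_2$ stated above. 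The "unit-function" term is handled by $S_{\tilde u}({\bf 1})(t) = (S_u^* {\bf 1})(1/t)$, whence $\|S_{\tilde u}({\bf 1})\|_{q,\tilde w,\I}/\|{\bf 1}\|_{p,\tilde v,\I} = \|S_u^*({\bf 1})\|_{q,w,\I}/\|{\bf 1}\|_{p,v,\I}$.

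The main obstacle will be purely computational: each of $\tilde A_1$, $\tilde B_1$, $\tilde B_2$ is a nested supremum-and-integral expression raised to several distinct fractional powers, and the inversion $t \mapsto 1/t$ has to be tracked simultaneously through the direction of each supremum (so that ``$\sup_{0 < \tau \le \cdot}$'' in Theorem \ref{supr.thm.23} becomes ``$\sup_{\sigma \ge 1/\cdot}$''), through the Jacobian factor $t^{-2}$ absorbed into $\tilde v$ and $\tilde w$, and through the swap of orientation of the cumulative weights $V \leftrightarrow V_*$, $W \leftrightarrow W_*$. No new analytic ingredient beyond Theorem \ref{supr.thm.23} is required; however, careful verification is needed for each of the three quantities in order to see that the conditions produced by the mechanical substitution match the ones stated in the theorem.
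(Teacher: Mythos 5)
Your proof is correct and coincides with the paper's own argument: the paper likewise defines $\tilde u(t)=u(1/t)$, $\tilde v(t)=v(1/t)t^{-2}$, $\tilde w(t)=w(1/t)t^{-2}$, observes that \eqref{supr.ineq.4} for $(u,v,w)$ on $\M^{\dn}$ is equivalent to \eqref{supr.ineq.3} for $(\tilde u,\tilde v,\tilde w)$ on $\M^{\up}$, and then invokes Theorem \ref{supr.thm.23} followed by undoing the substitution in the resulting conditions. The computational details you flag (tracking the orientation flips $V\leftrightarrow V_*$, $W\leftrightarrow W_*$ and the Jacobian) are exactly what the paper summarizes as ``applying substitution of variables mentioned above three times.''
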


\begin{proof}
Obviously, \eqref{supr.ineq.4}
is satisfied with the best constant $c$ if and only if
\begin{equation*}
\|S_{\tilde{u}} f \|_{q,\tilde{w},\I} \leq c
\,\|f\|_{p,\tilde{v},\I},~ f \in \M^{\up}
\end{equation*}
holds, where
$$
\tilde{u} (t) = u \bigg(\frac{1}{t}\bigg), ~ \tilde{w} (t) = w \bigg(\frac{1}{t}\bigg)\frac{1}{t^2}, ~\tilde{v} (t) = v \bigg(\frac{1}{t}\bigg)\frac{1}{t^2}, ~ t > 0.
$$
Using Theorem \ref{supr.thm.23}, and then applying substitution of variables mentioned above three times, we get the statement.
\end{proof}



\section{Iterated inequalities with supremal operators}\label{s.4}

In this section we characterize inequalities \eqref{ISI.1} and \eqref{ISI.3}.

The following theorem is true.
\begin{theorem}\label{supr.thm.41}
    Let $1 < p < \infty$, $0 < q < \infty$ and let $u \in \W\I \cap C\I$. Assume that $v,\,w \in \W\I$
    be such that
    $$
    0 < \int_0^x v^{1-p^{\prime}}(t)\,dt < \i \qquad \mbox{and} \qquad  0 < W_*(x) < \infty \qq \mbox{for all} \qq x > 0.
    $$
    Recall that
    $$
    \Phi \big[v;p\big](x) = \bigg( \int_0^x
    v^{1-{p}^{\prime}}(t)\,dt\bigg)^{{1} / {(p^{\prime} + 1)}},~ x > 0.
    $$
    Denote by
    $$
    \Phi_1(x) : = \sup_{0 < \tau \le x} u(\tau) \Phi^2[v;p](\tau), ~ x>0.
    $$
    Then \eqref{ISI.1}
    is satisfied with the best constant $c$ if and only if:

    {\rm (i)} $p \le q$, and in this case
    $c \ap A_1 + \| S_{u\Phi^2[v;p]} ({\bf 1})\|_{q,w,\I} / \|{\bf 1}\|_{p,\phi[v;p],\I}$, where
    $$
    A_1: = \sup_{x > 0}\bigg( [\Phi_1]^q(x) W_*(x) + \int_0^x  [\Phi_1]^q(t)  w(t)\,dt\bigg)^{1 / q}\Phi[v;p]^{- 1 / p}(x);
    $$

    {\rm (ii)} $q < p$, and in this case
    $c \ap B_1 + B_2 + \| S_{u\Phi^2[v;p]} ({\bf 1})\|_{q,w,\I} / \|{\bf 1}\|_{p,\phi[v;p],\I}$, where
    \begin{align*}
    B_1: & = \bigg(\int_0^{\infty} \bigg(\int_0^x  [\Phi_1]^q(t) w(t)\,dt\bigg)^{r / p} [\Phi_1]^q(x) \Phi[v;p]^{- r / p}(x) w(x)\,dx \bigg)^{1 / r}, \\
    B_2: & = \bigg(\int_0^{\infty} W_*^{r / p}(x) \bigg[\sup_{0 < \tau \le x} \Phi_1(\tau)\Phi[v;p]^{-1 / p}(\tau)\bigg]^{r}  w(x)\,dx \bigg)^{1 / r}.
    \end{align*}
\end{theorem}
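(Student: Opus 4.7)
The plan is to reduce the iterated inequality \eqref{ISI.1} to a plain restricted supremum inequality of the form \eqref{supr.ineq.1}, and then simply apply Theorem~\ref{supr.thm.11} to the transformed triple of weights. First I would verify that the operator $T:=S_u$ satisfies the structural conditions {\rm (i)}--{\rm (iii)}: positive homogeneity and monotonicity of the essential supremum are immediate, and the quasi-additivity
$$
S_u(f+\la \mathbf{1})(t)=\esup_{0<\tau\le t}u(\tau)\bigl(f(\tau)+\la\bigr)\le S_u(f)(t)+\la\,S_u(\mathbf{1})(t)
$$
gives {\rm (iii)} with constant $1$. Since the assumption $0<\int_0^x v^{1-p'}<\infty$ for every $x>0$ is exactly \eqref{RT.thm.main.3.eq.0}, Theorem~\ref{RT.thm.main.3} applies and converts \eqref{ISI.1} into the equivalent restricted inequality
\begin{equation}\label{plan.eq.1}
\bigl\|S_u\bigl(\Phi^2[v;p]\,f\bigr)\bigr\|_{q,w,\I}\le c\,\|f\|_{p,\phi[v;p],\I},\qquad f\in\M^{\dn}.
\end{equation}

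Next I would note that, by the very definition of $S_u$,
$$
S_u\bigl(\Phi^2 f\bigr)(t)=\esup_{0<\tau\le t}u(\tau)\Phi^2(\tau)f(\tau)=S_{u\Phi^2}(f)(t),
$$
so that \eqref{plan.eq.1} is literally an instance of \eqref{supr.ineq.1} in which the weight $u$ has been replaced by $u\Phi^2[v;p]$ and the weight $v$ by $\phi[v;p]$. At this point I would invoke Theorem~\ref{supr.thm.11}. Under this substitution one has
$$
\overline{u\Phi^2[v;p]}(t)=\sup_{0<\tau\le t}u(\tau)\Phi^2[v;p](\tau)=\Phi_1(t),\qquad V(x)=\int_0^x\phi[v;p]=\Phi[v;p](x),
$$
so the expressions $A_1,\,B_1,\,B_2$ of Theorem~\ref{supr.thm.11} become exactly the quantities bearing the same names in the statement of Theorem~\ref{supr.thm.41}. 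The extra additive term $\|S_u(\mathbf{1})\|_{q,w,\I}/\|\mathbf{1}\|_{p,v,\I}$ from Theorem~\ref{supr.thm.11} transforms into $\|S_{u\Phi^2[v;p]}(\mathbf{1})\|_{q,w,\I}/\|\mathbf{1}\|_{p,\phi[v;p],\I}$, matching the statement to be proved. Splitting into the two cases $p\le q$ and $q<p$ then yields the two claimed equivalences.

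The main obstacle I expect is of a bookkeeping rather than analytic nature: one must check that the hypotheses of Theorem~\ref{supr.thm.11} applied to the \emph{new} weights $u\Phi^2[v;p]$ and $\phi[v;p]$ are in fact guaranteed by the hypotheses of Theorem~\ref{supr.thm.41}. Specifically, Theorem~\ref{supr.thm.11} requires $0<V_*(x)<\infty$ for the new $v$, i.e.\ $0<\int_x^\infty\phi[v;p](t)\,dt<\infty$ for all $x>0$; when this tail is infinite the additive correction $\|S_{u\Phi^2[v;p]}(\mathbf{1})\|_{q,w,\I}/\|\mathbf{1}\|_{p,\phi[v;p],\I}$ degenerates, and one has to observe that in that situation the restricted test-inequality on the constant function $\mathbf{1}$ is automatically controlled by $A_1$ (respectively $B_1+B_2$), so that the stated condition remains equivalent to \eqref{ISI.1}. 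Once this case distinction is recorded the proof is complete; no further estimates beyond the two cited theorems are needed.
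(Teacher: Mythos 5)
Your proposal matches the paper's own argument exactly: apply Theorem~\ref{RT.thm.main.3} to the operator $S_u$ to convert \eqref{ISI.1} into the restricted inequality $\|S_{u\Phi^2[v;p]}(f)\|_{q,w,\I}\le C\|f\|_{p,\phi[v;p],\I}$ on $\M^{\dn}$, then read off the characterization from Theorem~\ref{supr.thm.11} with $u\mapsto u\Phi^2[v;p]$ and $v\mapsto\phi[v;p]$. The bookkeeping concern you raise at the end about the tail condition on $\phi[v;p]$ is a fair observation, but the paper's two-line proof makes exactly the same leap without comment, so your route is the same as theirs.
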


\begin{proof}
By Theorem \ref{RT.thm.main.3} applied to the
operator $S_{u}$, inequality \eqref{ISI.1} with the best constant $c$ holds if and only if the inequality
\begin{equation}\label{RT.SC.thm.41.eq.2}
\big\| S_{u \Phi^2[v;p]} (f)\big\|_{q, w, \I} \le C \,\|
f \|_{p, \phi[v;p],\I}, \, f \in \M^{\dn}
\end{equation}
holds. Moreover, $c \ap C$. Now the statement follows by Theorem \ref{supr.thm.11}.
\end{proof}

\begin{theorem}\label{supr.thm.61}
    Let $0 < q < \infty$ and let $u \in \W\I \cap C\I$. Assume that $v,\,w \in \W\I$
    be such that $0 < V(x) < \infty$ and $0 < W_*(x) < \infty$ for all $x > 0$. Denote by
    $$
    V_1(x) : = \sup_{0 < \tau \le x} u(\tau) V^2(\tau), ~ x>0.
    $$
    Then
    \begin{equation}\label{RT.SC.thm.51.eq.1}
    \left\|S_{u} \bigg( \int_0^x h\bigg)\right\|_{q,w,\I} \leq c
    \,\|h\|_{1,V^{-1},\I},
    \end{equation}
    is satisfied with the best constant $c$ if and only if:

    {\rm (i)} $p \le q$, and in this case
    $c \ap A_1 + \| S_{uV^2} ({\bf 1})\|_{q,w,\I} / \|{\bf 1}\|_{p,v,\I}$, where
    $$
    A_1: = \sup_{x > 0}\bigg( [V_1]^q(x) \int_x^{\infty} w(t)\,dt + \int_0^x  [V_1]^q(t)  w(t)\,dt\bigg)^{1 / q}V^{- 1 / p}(x);
    $$

    {\rm (ii)} $q < p$, and in this case
    $c \ap B_1 + B_2 + \| S_{uV^2} ({\bf 1})\|_{q,w,\I} / \|{\bf 1}\|_{p,v,\I}$, where
    \begin{align*}
    B_1: & = \bigg(\int_0^{\infty} \bigg(\int_0^x  [V_1]^q(t) w(t)\,dt\bigg)^{r / p} [V_1]^q(x) V^{- r / p}(x) w(x)\,dx \bigg)^{1 / r}, \\
    B_2: & = \bigg(\int_0^{\infty} \bigg(\int_x^{\infty}  w(t)\,dt\bigg)^{r / p} \bigg[\sup_{0 < \tau \le x} [V_1](\tau)V^{-1 / p}(\tau)\bigg]^{r}  w(x)\,dx \bigg)^{1 / r}.
    \end{align*}
    \end{theorem}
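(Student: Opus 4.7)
The plan is to derive Theorem~\ref{supr.thm.61} by composing two results already stated in the excerpt: the reduction Theorem~\ref{RT.thm.main.8.0}, which converts an iterated supremal-Hardy inequality on $\M^+$ into an equivalent restricted inequality on the cone $\M^{\dn}$, followed by Theorem~\ref{supr.thm.11} specialized to $p=1$.

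First, I would verify that $T:=S_u$ satisfies the conditions (i)--(iii) listed before Theorem~\ref{Reduction.Theorem.thm.3.1}. Positive homogeneity and monotonicity are immediate from the definition of the essential supremum. For the sublinearity condition (iii), the pointwise bound
$$
\esup_{0<\tau\le t} u(\tau)\bigl(f(\tau)+\lambda\bigr) \le \esup_{0<\tau\le t} u(\tau)f(\tau) + \lambda\,\esup_{0<\tau\le t} u(\tau)
$$
gives $S_u(f+\lambda\mathbf 1) \le S_u f + \lambda S_u\mathbf 1$ with constant $c=1$. Hence Theorem~\ref{RT.thm.main.8.0}, applied with $T=S_u$ and $\b=q$, reduces \eqref{RT.SC.thm.51.eq.1} to the equivalent restricted inequality
$$
\| S_{uV^2} f \|_{q,w,\I} \le c\,\|f\|_{1,v,\I}, \qquad f\in \M^{\dn},
$$
with the same best constant, up to universal equivalence.

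Next, I would apply Theorem~\ref{supr.thm.11} with $p=1$ and with the weight $u$ there replaced by $uV^2$. The associated non-decreasing envelope is
$$
\overline{uV^2}(t) = \sup_{0<\tau\le t} u(\tau)V^2(\tau) = V_1(t),
$$
which matches the notation introduced in the statement. Substituting $p=1$ into the quantities $A_1$, $B_1$, $B_2$ of Theorem~\ref{supr.thm.11} (so that $V^{-1/p}=V^{-1}$ and, in the subcritical range, $r=q/(1-q)$), and recalling that $W_*(x)=\int_x^\infty w$, yields termwise the expressions asserted in cases (i) and (ii) of Theorem~\ref{supr.thm.61}. The extra constant $\|S_{uV^2}(\mathbf 1)\|_{q,w,\I}/\|\mathbf 1\|_{1,v,\I}$ that Theorem~\ref{supr.thm.11} produces is precisely the one that appears in the final characterization.

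The main obstacle is essentially bookkeeping: one has to confirm that the dichotomy ``$p\le q$ / $q<p$'' of Theorem~\ref{supr.thm.11} specializes correctly to ``$1\le q$ / $0<q<1$'' here, and that each term of $A_1$, $B_1$, $B_2$ in Theorem~\ref{supr.thm.11} aligns with the corresponding expression in Theorem~\ref{supr.thm.61} under the substitutions $u\mapsto uV^2$ and $p\mapsto 1$. I do not anticipate any conceptual difficulty beyond this matching.
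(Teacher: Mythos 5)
Your proposal follows exactly the paper's proof: reduce \eqref{RT.SC.thm.51.eq.1} to the restricted inequality $\| S_{uV^2} f \|_{q,w,\I} \le C\|f\|_{1,v,\I}$ on $\M^{\dn}$ via Theorem~\ref{RT.thm.main.8.0}, then invoke Theorem~\ref{supr.thm.11} with $p=1$ and $u$ replaced by $uV^2$. The only difference is that you spell out the verification of conditions (i)--(iii) for $S_u$, which the paper leaves implicit.
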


\begin{proof}
By Theorem \ref{RT.thm.main.8.0} applied to the
operator $S_{u}$, inequality \eqref{RT.SC.thm.51.eq.1}
with the best constant $c_{51}$ holds if and
only if the inequality
\begin{equation}\label{RT.SC.thm.51.eq.2}
\big\| S_{u V^2} (f)\big\|_{q, w, \I} \le C \,\| f \|_{1, v,\I}, \, f \in \M^{\dn}
\end{equation}
holds. Moreover, $c \ap C$. Now the statement follows by Theorem  \ref{supr.thm.11}.
\end{proof}

The following "dual" statements also hold true.
\begin{theorem}\label{supr.thm.71}
    Let $1 < p < \infty$, $0 < q < \infty$ and let $u \in \W\I \cap C\I$. Assume that $v,\,w \in \W\I$
    be such that
    $$
    0 < \int_x^{\infty} v^{1-p^{\prime}}(t)\,dt < \i \qquad \mbox{and} \qquad  0 < W(x) < \infty \qq \mbox{for all} \qq x > 0.
    $$
    Recall that
    $$
    \Psi \big[v;p\big](x) = \bigg( \int_x^{\infty}
    v^{1-{p}^{\prime}}(t)\,dt\bigg)^{{1} / {(p^{\prime}+ 1)}}, ~ x > 0.
    $$
    Denote by
    $$
    \Psi_1(x) : = \sup_{x \le \tau < \infty} u(\tau) \Psi^2[v;p](\tau), ~ x>0.
    $$
    Then \eqref{ISI.3} is satisfied with the best constant $c$ if and only if:

    {\rm (i)} $p \le q$, and in this case
    $c \ap A_1 + \| S_{u\Psi^2[v;p]} ({\bf 1})\|_{q,w,\I} / \|{\bf 1}\|_{p,\psi[v;p],\I}$, where
    $$
    A_1: = \sup_{x > 0}\bigg( [\Psi_1]^q(x) W(x) + \int_x^{\infty}  [\Psi_1]^q(t)  w(t)\,dt\bigg)^{1 / q}\Psi[v;p]^{- 1 / p}(x);
    $$

    {\rm (ii)} $q < p$, and in this case
    $c \ap B_1 + B_2 + \| S_{u\Psi^2[v;p]} ({\bf 1})\|_{q,w,\I} / \|{\bf 1}\|_{p,\psi[v;p],\I}$, where
    \begin{align*}
    B_1: & = \bigg(\int_0^{\infty} \bigg(\int_x^{\infty}  [\Psi_1]^q(t) w(t)\,dt\bigg)^{r / p} [\Psi_1]^q(x) \Psi[v;p]^{- r / p}(x) w(x)\,dx \bigg)^{1 / r}, \\
    B_2: & = \bigg(\int_0^{\infty} W^{r / p}(x) \bigg[\sup_{x \le \tau < \infty} \Psi_1(\tau)\Psi[v;p]^{-1 / p}(\tau)\bigg]^{r}  w(x)\,dx \bigg)^{1 / r}.
    \end{align*}
\end{theorem}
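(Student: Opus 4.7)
The plan is to mirror the proof of Theorem \ref{supr.thm.41}, substituting the Copson integral $\int_x^{\infty}$ for the Hardy integral $\int_0^x$ and the operator $S_u^*$ for $S_u$. The two main ingredients will be a dual analogue of the reduction Theorem \ref{RT.thm.main.3}, and Theorem \ref{supr.thm.12}, which characterises the boundedness of $S_u^*$ on the cone $\M^{\up}$.

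First I would record the following dual reduction: for any $T\colon \M^+ \rw \M^+$ satisfying {\rm (i)--(iii)} and any $v$ with $0 < \int_x^{\infty} v^{1-p'}(t)\,dt < \infty$ for every $x > 0$, the inequality
$$
\bigg\| T \bigg( \int_x^{\infty} h \bigg) \bigg\|_{q,w,\I} \le c \, \|h\|_{p,v,\I}, \quad h \in \M^+,
$$
is equivalent to
$$
\| T_{\Psi^2[v;p]} f \|_{q,w,\I} \le C \, \|f\|_{p,\psi[v;p],\I}, \quad f \in \M^{\up},
$$
with $c \ap C$, where $\psi[v;p](x) = \left(\int_x^{\infty} v^{1-p'}\right)^{-p'/(p'+1)} v^{1-p'}(x)$ is the natural ``right'' counterpart of $\phi[v;p]$. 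This is obtained from Theorem \ref{RT.thm.main.3} by the change of variables $t \mapsto 1/t$, a technique already invoked several times in the paper to pass from a ``left'' statement to its ``right'' counterpart; the key algebraic fact is the identity $(1-p)(1-p') = 1$, which ensures that the pull-back of $v$ is still a genuine weight and that $\widetilde{\Phi}[\widetilde{v};p](x) = \Psi[v;p](1/x)$.

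Next I would apply this reduction to $T = S_u^*$. Conditions {\rm (i)--(iii)} for $S_u^*$ follow from the standard properties of the essential supremum. Since $(S_u^*)_{\Psi^2[v;p]}(f) = S_{u\Psi^2[v;p]}^* f$, the reduction turns \eqref{ISI.3} into the equivalent inequality
$$
\| S_{u\Psi^2[v;p]}^* f \|_{q,w,\I} \le C \, \|f\|_{p,\psi[v;p],\I}, \quad f \in \M^{\up}.
$$
Theorem \ref{supr.thm.12} now applies with $u$ replaced by $u\Psi^2[v;p]$ and $v$ replaced by $\psi[v;p]$. Two small identifications then match its output to the claimed expressions: directly from the definitions one has $\underline{u\Psi^2[v;p]}(x) = \Psi_1(x)$, while a differentiation of $\Psi[v;p]$ yields $\psi[v;p] = -(p'+1)(\Psi[v;p])'$, whence $\int_x^{\infty} \psi[v;p](t)\,dt = (p'+1)\Psi[v;p](x) \ap \Psi[v;p](x)$. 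Substituting these identifications into the formulas $A_1$, $B_1$, $B_2$ of Theorem \ref{supr.thm.12} reproduces the corresponding quantities in the present theorem, and the same substitution handles the endpoint term.

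The main obstacle is the bookkeeping in the dual reduction step: one must verify carefully that the change of variables $t \mapsto 1/t$ applied to Theorem \ref{RT.thm.main.3} correctly swaps the cones $\M^{\dn}$ and $\M^{\up}$, converts $\int_0^x$ into $\int_x^{\infty}$, and transforms $\Phi[v;p]$, $\phi[v;p]$ into $\Psi[v;p]$, $\psi[v;p]$ with the same structural relations. Once this is in place, the remainder of the argument is a routine application of Theorem \ref{supr.thm.12} together with the two identifications above.
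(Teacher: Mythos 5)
Your proposal is correct and follows essentially the same route the paper takes: the paper applies the change of variables $t\mapsto 1/t$ directly to inequality \eqref{ISI.3} to reduce it to the Hardy-type case handled by Theorem \ref{supr.thm.41} and then substitutes back, whereas you apply the same substitution at the level of the reduction theorem (producing the ``dual'' of Theorem \ref{RT.thm.main.3}) and of Theorem \ref{supr.thm.12} -- equivalent unfoldings of the same chain. The algebraic identifications you verify, namely $(1-p)(1-p')=1$ so that $\Phi[\tilde v;p](x)=\Psi[v;p](1/x)$, together with $\underline{u\Psi^2[v;p]}=\Psi_1$ and $\int_x^{\infty}\psi[v;p]\approx\Psi[v;p](x)$, are precisely the bookkeeping needed to turn the conclusion of Theorem \ref{supr.thm.12} into the stated formulas.
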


\begin{proof}
Obviously, \eqref{ISI.3}
is satisfied with the best constant $c$ if and only if
\begin{equation*}
\bigg\|S_{\tilde{u}} \bigg( \int_0^x h\bigg) \bigg\|_{q,\tilde{w},\I} \leq c
\,\|h\|_{p,\tilde{v},\I},~ h \in \M^+
\end{equation*}
holds, where
$$
\tilde{u} (t) = u \bigg(\frac{1}{t}\bigg), ~ \tilde{w} (t) = w \bigg(\frac{1}{t}\bigg)\frac{1}{t^2}, ~\tilde{v} (t) = v \bigg(\frac{1}{t}\bigg)\frac{1}{t^2}, ~ t > 0.
$$
Using Theorem \ref{supr.thm.41}, and then applying substitution of variables mentioned above three times, we get the statement.
\end{proof}

\begin{theorem}\label{supr.thm.81}
    Let $0 < q < \infty$ and let $u \in \W\I \cap C\I$. Assume that $v,\,w \in \W\I$
    be such that $0 < V_*(x) < \infty$ and $0 < W_*(x) < \infty$ for all $x > 0$. Denote by
    $$
    V_1^*(x) : = \sup_{x \le \tau < \infty} u(\tau) V_*^2(\tau), ~ x>0.
    $$
    Then
    \begin{equation}\label{RT.SC.thm.81.eq.1}
    \left\|S_{u}^* \bigg( \int_x^{\infty} h\bigg)\right\|_{q,w,\I} \leq c
    \,\|h\|_{1,V_*^{-1},\I},
    \end{equation}
    is satisfied with the best constant $c$ if and only if:

    {\rm (i)} $p \le q$, and in this case
    $c \ap A_1 + \big\| S_{uV_*^2}^* ({\bf 1})\big\|_{q,w,\I} / \|{\bf 1}\|_{p,v,\I}$, where
    $$
    A_1: = \sup_{x > 0}\bigg( [V_1^*]^q(x) W_*(x) + \int_0^x  [V_1^*]^q(t)  w(t)\,dt\bigg)^{1 / q}V^{- 1 / p}(x);
    $$

    {\rm (ii)} $q < p$, and in this case
    $c \ap B_1 + B_2 + \big\| S_{uV_*^2}^* ({\bf 1})\big\|_{q,w,\I} / \|{\bf 1}\|_{p,v,\I}$, where
    \begin{align*}
    B_1: & = \bigg(\int_0^{\infty} \bigg(\int_x^{\infty}  [V_1^*]^q(t) w(t)\,dt\bigg)^{r / p} [V_1^*]^q(x) V_*^{- r / p}(x) w(x)\,dx \bigg)^{1 / r}, \\
    B_2: & = \bigg(\int_0^{\infty} W_*^{r / p}(x) \bigg[\sup_{x \le \tau < \infty} [V_1^*](\tau)V^{-1 / p}(\tau)\bigg]^{r}  w(x)\,dx \bigg)^{1 / r}.
    \end{align*}
\end{theorem}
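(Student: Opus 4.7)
The plan is to follow the same pattern used in deriving Theorem~\ref{supr.thm.71} from Theorem~\ref{supr.thm.41}: I will reduce \eqref{RT.SC.thm.81.eq.1} to an instance of the inequality already characterized in Theorem~\ref{supr.thm.61} by means of the substitution $t\mapsto 1/t$, and then translate the resulting criterion back to the original weights.

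To implement the reduction I would set
$$
\tilde u(t)=u(1/t),\qquad \tilde w(t)=\tfrac{1}{t^2}w(1/t),\qquad \tilde v(t)=\tfrac{1}{t^2}v(1/t),
$$
and $\widetilde V(t):=\int_0^t \tilde v$. A short change-of-variable computation gives $\widetilde V(t)=V_*(1/t)$ and, writing $\tilde h(\sigma):=h(1/\sigma)\sigma^{-2}$,
$$
\int_{1/y}^\infty h(s)\,ds=\int_0^y \tilde h(\sigma)\,d\sigma,\qquad \|h\|_{1,V_*^{-1},\I}=\|\tilde h\|_{1,\widetilde V^{-1},\I}.
$$
Combined with the identity $S^*_u G(1/y)=\esup_{0<\sigma\le y}\tilde u(\sigma)G(1/\sigma)$ applied to $G(\tau)=\int_\tau^\infty h$, this will turn \eqref{RT.SC.thm.81.eq.1} into the equivalent inequality
$$
\bigg\|S_{\tilde u}\bigg(\int_0^x \tilde h\bigg)\bigg\|_{q,\tilde w,\I}\le c\,\|\tilde h\|_{1,\widetilde V^{-1},\I},\qquad \tilde h\in\M^+,
$$
with the same best constant $c$, because the map $h\mapsto\tilde h$ is a bijection on $\M^+$.

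Next, I will invoke Theorem~\ref{supr.thm.61} applied to the triple $(\tilde u,\tilde v,\tilde w)$. This furnishes the desired necessary and sufficient conditions in both regimes $p\le q$ and $q<p$, expressed through $\widetilde V$, the envelope $\widetilde V_1(x):=\esup_{0<\tau\le x}\tilde u(\tau)\widetilde V^2(\tau)$, and the distribution functions of $\tilde w$. The third step is to undo the substitution in each of these quantities: one checks that $\widetilde V(x)=V_*(1/x)$, that $\widetilde V_1(x)=V_1^*(1/x)$, that the outer integrals over $(0,x)$ and $(x,\infty)$ against $\tilde w$ become the corresponding integrals over $(1/x,\infty)$ and $(0,1/x)$ against $w$, and that the elementary term $\|S_{\tilde u\widetilde V^2}\mathbf{1}\|_{q,\tilde w,\I}/\|\mathbf{1}\|_{p,\tilde v,\I}$ produced by Theorem~\ref{supr.thm.61} reverts to $\|S^*_{uV_*^2}\mathbf{1}\|_{q,w,\I}/\|\mathbf{1}\|_{p,v,\I}$. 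Matching the translated conditions against the displays in the statement will recover the expressions $A_1$, $B_1$ and $B_2$.

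The only real obstacle is bookkeeping: every Jacobian factor $t^{-2}$ arising from the substitution must be absorbed correctly into the transformed weight so that the form of each quasi-norm is preserved, and each nested supremum and outer integral must be relabelled consistently with the swap $V\leftrightarrow V_*$, $W\leftrightarrow W_*$ and $\esup_{0<\tau\le x}\leftrightarrow\esup_{x\le\tau<\infty}$. Since the identical calculation has already been carried out in Theorems~\ref{supr.thm.111}, \ref{supr.thm.12}, \ref{supr.thm.33} and \ref{supr.thm.71}, no new analytic ingredient is required beyond Theorem~\ref{supr.thm.61} and the substitution $t\mapsto 1/t$.
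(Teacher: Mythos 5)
Your proposal is correct and takes essentially the same approach as the paper: both reduce \eqref{RT.SC.thm.81.eq.1} to an instance of Theorem~\ref{supr.thm.61} via the change of variables $t\mapsto 1/t$ (with the same transformed weights $\tilde u(t)=u(1/t)$, $\tilde w(t)=w(1/t)/t^2$, $\widetilde V(t)=V_*(1/t)$) and then translate the resulting conditions back. The paper's proof is the same argument stated more tersely, so no comparison is needed.
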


\begin{proof}
    By change of variables $x = 1 / t$, it is easy to see that inequality \eqref{RT.SC.thm.81.eq.1} holds if and only if
    \begin{equation}\label{IHI.1.1.1}
    \left\|S_{\tilde{u}} \bigg( \int_0^x h\bigg)\right\|_{q,\tilde{w},\I} \leq c
    \,\|h\|_{1,\tilde{V}^{-1},\I},~h\in \M^+
    \end{equation}
    holds, where
    $$
    \tilde{u} (t) = u \bigg(\frac{1}{t}\bigg), ~ \tilde{w} (t) = w \bigg(\frac{1}{t}\bigg)\frac{1}{t^2}, ~\tilde{V} (t) = \int_0^t v \bigg(\frac{1}{y}\bigg)\frac{1}{y^2}\,dy, ~ t > 0.
    $$
    Applying Theorem \ref{supr.thm.61}, and then using substitution of variables mentioned above three times, we get the statement.
\end{proof}


\section{Hardy-operator involving suprema - $T_{u,b}$}\label{s.5}

In this section we give complete characterization of inequality \eqref{Tub.thm.1.eq.1}.

\subsection{The case $1 \le p < \infty$}

The following theorem is true.

\begin{theorem}\label{Tub.thm.1}
Let $0 < q \le \infty$, $1 \le p < \infty$ and let $u \in \W\I \cap C\I$. Assume that $b,\,v,\,w \in \W\I$
be such that
$$
0 < B(t) < \infty, ~ 0 < V(x) < \i ~ \mbox{and} ~  0 < W(x) < \infty ~ \mbox{for all} ~ x > 0.
$$
Then inequality \eqref{Tub.thm.1.eq.1}
is satisfied with the best constant $c$ if and only if:

    {\rm (i)} $1 < p \le q$, and in this case $c \ap A_1 + A_2$, where
    \begin{align*}
    A_1: & = \sup_{x > 0}\bigg( \bigg[\sup_{x \le \tau < \infty} \frac{u(\tau)}{B(\tau)}\bigg]^q  W(x) + \int_x^{\infty}  \bigg[\sup_{t \le \tau < \infty} \frac{u(\tau)}{B(\tau)}\bigg]^q w(t)\,dt\bigg)^{1 / q}\bigg(\int_0^x \bigg(\frac{B(y)}{V(y)}\bigg)^{p'}v(y)\,dy\bigg)^{1 / p'}, \\
    A_2: &  = \sup_{x > 0}\bigg( \bigg[\sup_{x \le \tau < \infty} \frac{u(\tau)}{V^2(\tau)}\bigg]^q  W(x) + \int_x^{\infty}  \bigg[\sup_{t \le \tau < \infty} \frac{u(\tau)}{V^2(\tau)}\bigg]^q w(t)\,dt\bigg)^{1 / q}\bigg(\int_0^x V^{p'}(y)v(y)\,dy\bigg)^{1 / p'};
    \end{align*}

    {\rm (ii)} $1 = p \le q$, and in this case $c \ap A_1 + A_2$, where
    \begin{align*}
    A_1: & = \sup_{x > 0}\bigg( \bigg[\sup_{x \le \tau < \infty} \frac{u(\tau)}{B(\tau)}\bigg]^q  W(x) + \int_x^{\infty}  \bigg[\sup_{t \le \tau < \infty} \frac{u(\tau)}{B(\tau)}\bigg]^q w(t)\,dt\bigg)^{1 / q}\bigg(\sup_{0 < y \le x} \frac{B(y)}{V(y)}\bigg), \\
    A_2: &  = \sup_{x > 0}\bigg( \bigg[\sup_{x \le \tau < \infty} \frac{u(\tau)}{V^2(\tau)}\bigg]^q  W(x) + \int_x^{\infty}  \bigg[\sup_{t \le \tau < \infty} \frac{u(\tau)}{V^2(\tau)}\bigg]^q w(t)\,dt\bigg)^{1 / q}V(x);
    \end{align*}

    {\rm (iii)} $1 < p$ and $q < p$,  and in this case $c \ap B_1 + B_2 + B_3 + B_4$, where
    \begin{align*}
    B_1: & = \bigg(\int_0^{\infty} \bigg(\int_x^{\infty}  \bigg[\sup_{t \le \tau < \infty} \frac{u(\tau)}{B(\tau)}\bigg]^q  w(t)\,dt\bigg)^{r / p} \bigg[\sup_{x \le \tau < \infty} \frac{u(\tau)}{B(\tau)}\bigg]^q \bigg(\int_0^x \bigg(\frac{B(y)}{V(y)}\bigg)^{p'}v(y)\,dy\bigg)^{r / p'} w(x)\,dx \bigg)^{1/r}, \\
    B_2: & = \bigg(\int_0^{\infty} W^{r / p}(x) \bigg[\sup_{x \le \tau < \infty} \bigg[\sup_{\tau \le y < \infty} \frac{u(y)}{B(y)}\bigg] \bigg(\int_0^x \bigg(\frac{B(y)}{V(y)}\bigg)^{p'}v(y)\,dy\bigg)^{1 / p'} \bigg]^r  w(x)\,dx \bigg)^{1/r},\\
    B_3: & = \bigg(\int_0^{\infty} \bigg(\int_x^{\infty}  \bigg[\sup_{t \le \tau < \infty} \frac{u(\tau)}{V^2(\tau)}\bigg]^q  w(t)\,dt\bigg)^{r / p} \bigg[\sup_{x \le \tau < \infty} \frac{u(\tau)}{V^2(\tau)}\bigg]^q \bigg(\int_0^x V^{p'}(y)v(y)\,dy\bigg)^{r / p'} w(x)\,dx \bigg)^{1/r}, \\
    B_4: & = \bigg(\int_0^{\infty} W^{r / p}(x) \bigg[\sup_{x \le \tau < \infty} \bigg[\sup_{\tau \le y < \infty} \frac{u(y)}{V^2(y)}\bigg] \bigg(\int_0^x V^{p'}(y)v(y)\,dy\bigg)^{1 / p'} \bigg]^r  w(x)\,dx \bigg)^{1/r}.
    \end{align*}

    {\rm (iv)} $q < 1 = p$,  and in this case $c \ap B_1 + B_2 + B_3 + B_4$, where
    \begin{align*}
    B_1: & = \bigg(\int_0^{\infty} \bigg(\int_x^{\infty}  \bigg[\sup_{t \le \tau < \infty} \frac{u(\tau)}{B(\tau)}\bigg]^q  w(t)\,dt\bigg)^{r / p} \bigg[\sup_{x \le \tau < \infty} \frac{u(\tau)}{B(\tau)}\bigg]^q \bigg(\sup_{0 < y \le x} \frac{B(y)}{V(y)}\bigg)^{r} w(x)\,dx \bigg)^{1/r}, \\
    B_2: & = \bigg(\int_0^{\infty} W^{r / p}(x) \bigg[\sup_{x \le \tau < \infty} \bigg[\sup_{\tau \le y < \infty} \frac{u(y)}{B(y)}\bigg] \bigg(\sup_{0 < y \le x} \frac{B(y)}{V(y)}\bigg) \bigg]^r  w(x)\,dx \bigg)^{1/r},\\
    B_3: & = \bigg(\int_0^{\infty} \bigg(\int_x^{\infty}  \bigg[\sup_{t \le \tau < \infty} \frac{u(\tau)}{V^2(\tau)}\bigg]^q  w(t)\,dt\bigg)^{r / p} \bigg[\sup_{x \le \tau < \infty} \frac{u(\tau)}{V^2(\tau)}\bigg]^q V^r(x) w(x)\,dx \bigg)^{1/r}, \\
    B_4: & = \bigg(\int_0^{\infty} W^{r / p}(x) \bigg[\sup_{x \le \tau < \infty} \bigg[\sup_{\tau \le y < \infty} \frac{u(y)}{V^2(y)}\bigg] V(x) \bigg]^r  w(x)\,dx \bigg)^{1/r}.
    \end{align*}
\end{theorem}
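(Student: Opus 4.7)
The plan is to reduce inequality \eqref{Tub.thm.1.eq.1} to two restricted supremum inequalities of type \eqref{ISI.4} handled in Section \ref{s.3}, by splitting the integral inside $T_{u,b}$ via Fubini. First, $T_{u,b}$ satisfies conditions (i)--(iii) from Section \ref{pre}: positive homogeneity and monotonicity are obvious, while (iii) follows from
\begin{equation*}
T_{u,b}(f+\lambda\mathbf{1})(t)=\sup_{t\le\tau<\infty}\bigg[\frac{u(\tau)}{B(\tau)}\int_0^\tau fb+\lambda u(\tau)\bigg]\le T_{u,b}f(t)+\lambda T_{u,b}\mathbf{1}(t).
\end{equation*}
Theorem \ref{Reduction.Theorem.thm.3.1} therefore reduces \eqref{Tub.thm.1.eq.1} to
\begin{equation*}
\bigg\|T_{u,b}\bigg(\int_x^\infty h\bigg)\bigg\|_{q,w,\I}\le c\|h\|_{p,V^p v^{1-p},\I},\qquad h\in\M^+,
\end{equation*}
together with the harmless condition $\|T_{u,b}\mathbf{1}\|_{q,w,\I}\le c\|\mathbf{1}\|_{p,v,\I}$ when $V(\infty)<\infty$. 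For $f(x)=\int_x^\infty h$, Fubini gives $\int_0^\tau fb=\int_0^\tau hB+B(\tau)\int_\tau^\infty h$, so that, using $\sup(A+B)\approx\sup A+\sup B$ for non-negative $A,B$,
\begin{equation*}
T_{u,b}\bigg(\int_x^\infty h\bigg)(t)\approx S^*_{u/B}\bigg(\int_0^\cdot hB\bigg)(t)+S^*_u\bigg(\int_\cdot^\infty h\bigg)(t)=:F_1(t)+F_2(t).
\end{equation*}
Quasi-subadditivity of $\|\cdot\|_{q,w,\I}$ then splits the reduced inequality into the pair $\|F_i\|_{q,w,\I}\le c_i\|h\|_{p,V^p v^{1-p},\I}$, $i=1,2$.

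For the $F_1$-piece, the substitution $\tilde h:=hB$ yields an inequality of type \eqref{ISI.4} with $u,v$ replaced by $u/B$ and $(V/B)^p v^{1-p}$. Since $((V/B)^p v^{1-p})^{1-p'}=(B/V)^{p'}v$, the quantity $\sigma_p(0,x)$ appearing in Theorem \ref{supr.thm.101} becomes $\big(\int_0^x(B/V)^{p'}v\big)^{1/p'}$ for $1<p<\infty$ (respectively $\sup_{0<y\le x}B(y)/V(y)$ when $p=1$); Theorem \ref{supr.thm.101} therefore produces the $A_1$-term for $p\le q$ and the $B_1,B_2$-terms for $q<p$. For the $F_2$-piece, $F_2=S^*_u f$ is exactly the operator of inequality \eqref{supr.ineq.4}. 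Since $S^*_u$ also satisfies (i)--(iii), I would apply Theorem \ref{Reduction.Theorem.thm.3.2} instead, which (after the substitution $\tilde g:=gV$) reduces $\|S^*_u f\|_{q,w,\I}\le c_2\|f\|_{p,v,\I}$ on $\M^{\dn}$ to
\begin{equation*}
\bigg\|S^*_{u/V^2}\bigg(\int_0^\cdot \tilde g\bigg)\bigg\|_{q,w,\I}\le c_2\|\tilde g\|_{p,V^{-p} v^{1-p},\I},\qquad \tilde g\in\M^+.
\end{equation*}
Since $(V^{-p}v^{1-p})^{1-p'}=V^{p'}v$, applying Theorem \ref{supr.thm.101} with $u\to u/V^2$ and $v\to V^{-p}v^{1-p}$ yields the $A_2$-term for $p\le q$ and the $B_3,B_4$-terms for $q<p$. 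The cases (i)--(iv) of the statement correspond to the binary splits $p=1$ vs.\ $1<p$ (where $\sigma_p$ degenerates to $\esup [v]^{-1}$) and $p\le q$ vs.\ $q<p$.

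The main obstacle will be the bookkeeping in the quasi-normed regime $0<q<1$, where the splitting $\|F_1+F_2\|_{q,w,\I}\approx\|F_1\|_{q,w,\I}+\|F_2\|_{q,w,\I}$ must be justified via the $q$-concavity inequality, and the necessity direction must be recovered by testing the original inequality on carefully chosen $f\in\M^{\dn}$ to decouple $F_1$ and $F_2$. A secondary technicality is that when $V(\infty)<\infty$ the extra condition $\|T_{u,b}\mathbf{1}\|_{q,w,\I}\le c\|\mathbf{1}\|_{p,v,\I}$ must be seen to be absorbed in $A_1+A_2$ (respectively $B_1+\cdots+B_4$), which follows by letting $x\to\infty$ in the relevant suprema.
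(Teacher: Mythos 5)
Your proposal is correct and follows essentially the same path as the paper's proof: reduce via Theorem \ref{Reduction.Theorem.thm.3.1}, decompose $T_{u,b}\bigl(\int_\cdot^\infty h\bigr)$ by Fubini into the two supremum pieces, translate the $S_u^*$ piece back to a monotone-cone inequality and then forward via Theorem \ref{Reduction.Theorem.thm.3.2}, and finally apply Theorem \ref{supr.thm.101} to both resulting inequalities. The only minor inefficiency is your remark that necessity of the two split inequalities requires decoupling test functions; in fact it is immediate since $F_1,F_2\ge 0$ implies $F_i\ls F_1+F_2$, but this does not affect correctness.
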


\begin{proof}
By Theorem \ref{Reduction.Theorem.thm.3.1}, \eqref{Tub.thm.1.eq.1} holds iff both
    \begin{equation}\label{Tub.thm.1.eq.5}
    \bigg\| T_{u,b}\bigg( \int_x^{\i} h \bigg)\bigg\|_{q,w,\I} \le c \| h
    \|_{p, V^p v^{1-p},\I}, \qq h \in \M^+(0,\i).
    \end{equation}
    and
    \begin{equation}\label{Tub.thm.1.eq.6}
    \|T_{u,b} {\bf 1}\|_{q,w,\I} \le c \|{\bf 1}\|_{p,v,\I}
    \end{equation}
    hold.

Note that
\begin{align*}
T_{u,b}\bigg( \int_{t}^{\i} h \bigg) (x) & = \sup_{x \le \tau < \infty} \frac{u(\tau)}{B(\tau)} \int_0^{\tau} \bigg( \int_s^{\infty} h(y)\,dy\bigg) b(s)\,ds \\
& \ap \sup_{x \le \tau < \infty} \frac{u(\tau)}{B(\tau)} \int_0^{\tau} h(y)B(y)\,dy + \sup_{x \le \tau < \infty} u(\tau) \int_{\tau}^{\infty} h(s)\,ds \\
& = S_{u/B}^* \bigg(\int_0^{\tau} hB\bigg) + S_u^* \bigg(\int_{\tau}^{\infty} h\bigg).
\end{align*}

Hence, inequality \eqref{Tub.thm.1.eq.1}
holds iff inequalities
\begin{equation}\label{Tub.thm.1.eq.2}
\bigg\|S_{u/B}^* \bigg(\int_0^{\tau} h\bigg)\bigg\|_{q,w,\I} \le c \| h
\|_{p, B^{-p}V^p v^{1-p},\I}, \qq h \in \M^+(0,\i),
\end{equation}
\begin{equation}\label{Tub.thm.1.eq.3}
\bigg\| S_u^* \bigg(\int_{\tau}^{\infty} h\bigg) \bigg\|_{q,w,\I} \le c \| h
\|_{p, V^p v^{1-p},\I}, \qq h \in \M^+(0,\i).
\end{equation}
and \eqref{Tub.thm.1.eq.6} hold.

Again by Theorem \ref{Reduction.Theorem.thm.3.1}, \eqref{Tub.thm.1.eq.3} with \eqref{Tub.thm.1.eq.6} is equivalent to
\begin{equation}\label{Tub.thm.1.eq.4}
\|S_u^* f \|_{q,w,\I} \le c \| f \|_{p,v,\I}, \qq f \in \M^{\dn}(0,\i).
\end{equation}

Now by Theorem \ref{Reduction.Theorem.thm.3.2}, \eqref{Tub.thm.1.eq.4} is equivalent to
\begin{equation}\label{Tub.thm.1.eq.5.0}
    \bigg\| S_{u / V^2}^* \bigg( \int_0^x h \bigg)\bigg\|_{q,w,\I}
    \le c \| h \|_{p, V^{-p}v^{1-p},\I}, \qq h \in \M^+(0,\i).
\end{equation}

Consequently, \eqref{Tub.thm.1.eq.1}
holds iff inequalities \eqref{Tub.thm.1.eq.2} and \eqref{Tub.thm.1.eq.5.0} hold.

    {\rm (i)} $p \le q$. By Theorem \ref{supr.thm.101}, \eqref{Tub.thm.1.eq.2} and \eqref{Tub.thm.1.eq.5.0} hold if and only if both
    $A_1 < \infty$ and $A_2 < \infty$
    hold, respectively.

     {\rm (ii)} $q < p$. By Theorem \ref{supr.thm.101}, \eqref{Tub.thm.1.eq.2} and \eqref{Tub.thm.1.eq.5.0} hold if and only if
     $B_i < \infty$, $i=1,2,3,4$
     hold, respectively.
\end{proof}

\subsection{The case $0< p < 1$}

We start with a simple observation. If $0 < p \le 1$ and $t \in \I$, then
\begin{equation}\label{RT.SO.eq.1.1}
\sup_{0 < \tau \le t} f (\tau) B(\tau) \le \int_0 ^t f(y)b(y)\,dy \ls \bigg( \int_0^t f(y)^p B(y)^{p-1} b(y)\,dy\bigg) ^{{1} / {p}}, ~ f \in \M^{\dn}.
\end{equation}
Since $f$ is non-increasing, the first inequality in \eqref{RT.SO.eq.1.1} is obvious. The second one follows, for instance, from the fact that (see, for instance, \cite[Theorem 3.2]{carsor1993}, cf. also \cite{ss})
$$
\sup_{f\in \M^{\dn}:\,f \not \sim 0} \frac{\int_0^{\infty} f(x)g(x)\,dx}{\bigg(\int_0^{\infty} f(x)^p v(x)\,dx\bigg)^{1/p}} \ap \sup_{t > 0} \bigg( \int_0^t g(x)\,dx \bigg(\int_0^t v(x)\,dx\bigg)^{- 1/ p}\bigg).
$$

Our first aim is to prove a reduction theorem for the operator $T_{u,b}$. We first note that, using the monotonicity of $\int_0^t fb$ and interchanging the suprema, we get
$$
(T_{u,b} g)(t) = \sup_{t \le \tau < \infty} \frac{u(\tau)}{B(\tau)} \int_0^{\tau} g(y)b(y)\,dy = \sup_{t \le \tau < \infty} \bigg(\sup_{\tau \le x < \infty}\frac{u(x)}{B(x)}\bigg) \int_0^{\tau} g(y)b(y)\,dy,\qquad t \in \I.
$$
As a consequence, we can safely assume that ${u(x)} / {B(x)}$ is non-increasing on $\I$, since otherwise we would just replace ${u(x)} / {B(x)}$ by $\sup_{\tau \le x < \infty}{u(x)} / {B(x)}$.
\begin{theorem}\label{RT.SO.thm.2}
    Let $0 < p \le 1$, $0 < q < \infty$. Assume that $u \in \W\I \cap C\I$ and $b,\,v,\,w \in \W\I$ be such that $0 < B(t) < \infty$ for all $x > 0$. Then the following three statements are equivalent:

    \begin{align}
    \bigg( \int_0^{\infty} \bigg( \sup_{t \le \tau < \infty} \frac{u(\tau)}{B(\tau)}\int_0^{\tau} f(y) b(y) \,dy\bigg)^q w(t)\,dt\bigg)^{1 / q} & \ls \bigg( \int_0^{\infty} f(\tau)^p v(\tau)\,d\tau\bigg)^{1 / p}, ~ f \in \M^{\dn}; \label{RT.SO.thm.2.eq.1} \\
    \bigg( \int_0^{\infty} \bigg( \sup_{t \le \tau < \infty} \bigg(\frac{u(\tau)}{B(\tau)}\bigg)^p \int_0^{\tau} f(y) B(y)^{p-1}b(y) \,dy\bigg)^{q / p} w(t)\,dt\bigg)^{1 / q} & \ls \bigg( \int_0^{\infty} f(\tau) v(\tau)\,d\tau\bigg)^{1 / p}, ~ f \in \M^{\dn}; \label{RT.SO.thm.2.eq.2} \\
    \bigg( \int_0^{\infty} \bigg( \sup_{t \le \tau < \infty} \frac{u(\tau)}{B(\tau)} \sup_{0 < y \le \tau} f(y) B(y) \bigg)^q w(t)\,dt\bigg)^{1 / q} & \ls \bigg( \int_0^{\infty} f(\tau)^p v(\tau)\,d\tau\bigg)^{1 / p}, ~ f \in \M^{\dn}. \label{RT.SO.thm.2.eq.3}
    \end{align}
\end{theorem}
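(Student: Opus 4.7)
The plan is to close the cycle (2)$\Rightarrow$(1)$\Rightarrow$(3)$\Rightarrow$(2); the first two implications are immediate pointwise consequences of \eqref{RT.SO.eq.1.1}, while the last is the principal technical step.

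For (2)$\Rightarrow$(1), take $f\in\M^{\dn}$ and apply the second inequality in \eqref{RT.SO.eq.1.1} to obtain $\int_0^\tau fb\ls\bigl(\int_0^\tau f^p B^{p-1}b\bigr)^{1/p}$. Multiplying by $u(\tau)/B(\tau)$, taking $\sup_{\tau\ge t}$, and using that $x\mapsto x^{1/p}$ commutes with the supremum since it is monotone, the quantity inside the $\|\cdot\|_{q,w,\I}$ on the LHS of (1) is dominated by $A_2(f^p)(t)^{1/p}$, where $A_2(g)(t):=\sup_{\tau\ge t}(u(\tau)/B(\tau))^p\int_0^\tau gB^{p-1}b$ denotes the operator from (2). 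Taking norms yields $\mathrm{LHS}(1)\ls\|A_2(f^p)\|_{q/p,w,\I}^{1/p}$, and applying (2) to $g:=f^p\in\M^{\dn}$, together with $\|f^p\|_{1,v,\I}^{1/p}=\|f\|_{p,v,\I}$, completes the step. For (1)$\Rightarrow$(3), the first inequality in \eqref{RT.SO.eq.1.1} gives $\sup_{0<y\le\tau}f(y)B(y)\le\int_0^\tau fb$ for every $\tau>0$, so the integrand inside the LHS of (3) is pointwise dominated by the integrand inside the LHS of (1).

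For (3)$\Rightarrow$(2) my plan is a two-stage argument: first prove (2) for indicators $g=\chi_{(0,s)}$ using (3), then extend to all of $\M^{\dn}$ by a layer-cake decomposition. For the indicator case a direct calculation gives $\sup_{0<y\le\tau}\chi_{(0,s)}(y)B(y)=B(\min(\tau,s))$ and $\int_0^\tau\chi_{(0,s)}B^{p-1}b=B(\min(\tau,s))^p/p$, so that $A_3(\chi_{(0,s)})(t)=\sup_{\tau\ge t}(u(\tau)/B(\tau))B(\min(\tau,s))=p^{1/p}\,A_2(\chi_{(0,s)})(t)^{1/p}$, where $A_3$ denotes the operator in the LHS of (3). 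Applying (3) to $f=\chi_{(0,s)}$ (with $\|\chi_{(0,s)}\|_{p,v,\I}=V(s)^{1/p}=\|\chi_{(0,s)}\|_{1,v,\I}^{1/p}$) yields $\|A_2(\chi_{(0,s)})\|_{q/p,w,\I}^{1/p}\le(C_3/p^{1/p})\,\|\chi_{(0,s)}\|_{1,v,\I}^{1/p}$, which is (2) for $g=\chi_{(0,s)}$ with constant $C_3^p/p$.

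To extend to general $g\in\M^{\dn}$, use the layer-cake representation $g(y)=\int_0^\infty\chi_{(0,\sigma(\lambda))}(y)\,d\lambda$, where $\sigma(\lambda):=\sup\{y:g(y)>\lambda\}$. The operator $A_2$ is sublinear since the integral defining it is linear in $g$ and the outer supremum is subadditive, hence $A_2(g)\le\int_0^\infty A_2(\chi_{(0,\sigma(\lambda))})\,d\lambda$ pointwise. Taking $\|\cdot\|_{q/p,w,\I}$, applying Minkowski's integral inequality (valid when $q/p\ge1$), invoking the indicator case, and using the Fubini identity $\int_0^\infty V(\sigma(\lambda))\,d\lambda=\|g\|_{1,v,\I}$ concludes the proof of (2). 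The principal obstacle is the range $q/p<1$, where Minkowski's integral inequality fails in the quasi-normed space $L^{q/p}(w)$; this case should be handled by a discrete dyadic decomposition of $g$ at level sets $\{g\approx 2^k\}$ combined with the $r$-inequality $\|\sum A_k\|_r^r\le\sum\|A_k\|_r^r$ valid for $r=q/p<1$, plus a bookkeeping argument exploiting the monotone structure of $g$ to avoid the loss inherent in $\sum a_k^r\gtrsim(\sum a_k)^r$.
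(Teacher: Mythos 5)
Your implications (2) $\Rightarrow$ (1) $\Rightarrow$ (3) are correct and match the paper's (both are direct pointwise consequences of \eqref{RT.SO.eq.1.1}). For (3) $\Rightarrow$ (2) you propose an indicator-testing route that is entirely different from the paper's: on indicators $\chi_{(0,s)}$ the exact identity $A_2(\chi_{(0,s)})=p^{-1}\big[A_3(\chi_{(0,s)})\big]^{p}$ gives (2) immediately from (3), and you then transfer to general $g\in\M^{\dn}$ via sublinearity of $A_2$ and Minkowski's integral inequality when $q/p\ge 1$. That part is correct and in fact shorter than the paper's argument in the range $p\le q$.

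The gap lies in the range $q<p$, and it is not merely a bookkeeping issue. Your sketched dyadic repair requires $\sum_k\big(2^kV(s_k)\big)^{q/p}\ls\big(\sum_k 2^kV(s_k)\big)^{q/p}$ with $q/p<1$, which is false for general sequences; the monotone structure of $g$ only yields that $2^k$ increases while $V(s_k)$ decreases, which does not rescue the sum. More fundamentally, indicator testing cannot in general characterize (2) when $q<p$: the paper's explicit characterization of (2), via Theorem \ref{Tub.thm.1}(iv) applied with exponent $1$ in place of $p$, involves genuine \emph{integral} conditions $B_1,\dots,B_4$, whereas testing on the family $\{\chi_{(0,s)}\}$ can only produce a supremum-type condition, which is strictly weaker in this range. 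The paper avoids the indicator-to-cone transfer altogether: it assumes WLOG that $u/B$ is non-increasing, splits the supremum in (3) into two simpler supremal inequalities, characterizes them and (2) explicitly by Theorems \ref{supr.thm.11}, \ref{supr.thm.33} and \ref{Tub.thm.1}, and then shows directly that the weight conditions coming from (3) imply those for (2); the dyadic manipulation in the $q<p$ case is carried out on the weight $w$ (choosing $x_k$ with $\int_0^{x_k}w=2^k$), not on the function $g$, and this is what circumvents the $r$-inequality loss that blocks your approach.
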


\begin{proof}
Again, in view of \eqref{RT.SO.eq.1.1}, the implications $\eqref{RT.SO.thm.2.eq.2} \Rightarrow \eqref{RT.SO.thm.2.eq.1} \Rightarrow \eqref{RT.SO.thm.2.eq.3}$ are obvious, and it just remains to show that \eqref{RT.SO.thm.2.eq.3} implies \eqref{RT.SO.thm.2.eq.2}.

Suppose, thus, that \eqref{RT.SO.thm.2.eq.3} holds. Since $u(x) / B(x)$ is non-increasing, we have
\begin{align*}
\sup_{t \le \tau < \infty} \frac{u(\tau)}{B(\tau)} \sup_{0 < y \le \tau} f(y) B(y)  & = \max \bigg\{ \sup_{t \le \tau < \infty} \frac{u(\tau)}{B(\tau)} \sup_{0 < y \le t} f(y) B(y),\,\sup_{t \le \tau < \infty} \frac{u(\tau)}{B(\tau)} \sup_{t < y \le \tau} f(y) B(y)  \bigg\} \\
& = \max \bigg\{ \frac{u(t)}{B(t)} \sup_{0 < y \le t} f(y) B(y),\,\sup_{t \le y < \infty} f(y) B(y) \sup_{y \le \tau < \infty} \frac{u(\tau)}{B(\tau)} \bigg\} \\
& = \max \bigg\{ \frac{u(t)}{B(t)} \sup_{0 < y \le t} f(y) B(y),\,\sup_{t \le y < \infty} f(y) u(y) \bigg\}.
\end{align*}

Hence, \eqref{RT.SO.thm.2.eq.3} breaks down into the following two inequalities:
\begin{align}
    \bigg( \int_0^{\infty} \bigg( \sup_{0 < y \le t} f(y) B(y) \bigg)^q w(t) \bigg(\frac{u(t)}{B(t)}\bigg)^q\,dt\bigg)^{1 / q} & \ls \bigg( \int_0^{\infty} f(\tau)^p v(\tau)\,d\tau\bigg)^{1 / p}, ~ f \in \M^{\dn}, \label{RT.SO.thm.2.eq.4} \\
    \bigg( \int_0^{\infty} \bigg( \sup_{t \le y < \infty} f(y) u(y) \bigg)^q w(t) \,dt\bigg)^{1 / q} & \ls \bigg( \int_0^{\infty} f(\tau)^p v(\tau)\,d\tau\bigg)^{1 / p}, ~ f \in \M^{\dn}. \label{RT.SO.thm.2.eq.5}
\end{align}

Obviously, \eqref{RT.SO.thm.2.eq.4} and \eqref{RT.SO.thm.2.eq.5} are equivalent to
\begin{align}
\bigg( \int_0^{\infty} \bigg( \sup_{0 < y \le t} f(y) B(y)^p \bigg)^{q / p} w(t) \bigg(\frac{u(t)}{B(t)}\bigg)^q\,dt\bigg)^{p / q} & \ls \int_0^{\infty} f(\tau) v(\tau)\,d\tau, ~ f \in \M^{\dn}, \label{RT.SO.thm.2.eq.4.1} \\
\bigg( \int_0^{\infty} \bigg( \sup_{t \le y < \infty} f(y) u(y)^p \bigg)^{q / p} w(t) \,dt\bigg)^{p / q} & \ls \int_0^{\infty} f(\tau) v(\tau)\,d\tau, ~ f \in \M^{\dn}. \label{RT.SO.thm.2.eq.5.1}
\end{align}

{\rm (i)} Let $p \le q$. By Theorem \ref{supr.thm.11}, \eqref{RT.SO.thm.2.eq.4.1} holds iff both
\begin{equation}\label{RT.SO.thm.2.eq.9}
\sup_{x > 0} \bigg(\int_0^x u(t)^q w(t)\,dt + B(x)^q\int_x^{\infty} \bigg[ \frac{u(t)}{B(t)}\bigg]^q w(t)\,dt \bigg)^{1 / q}V^{- 1 / p}(x) < \infty
\end{equation}
and
\begin{equation}\label{RT.SO.thm.2.eq.10}
\bigg( \int_0^{\infty} u(t)^q  w(t)\,dt \bigg)^{1 / q} \ls \bigg( \int_0^{\infty} v(\tau)\,d\tau\bigg)^{1 / p}
\end{equation}
hold.

By Theorem \ref{supr.thm.33}, \eqref{RT.SO.thm.2.eq.5.1} holds iff both
\begin{equation}\label{RT.SO.thm.2.eq.12}
\sup_{x > 0}\bigg( \bigg[ \sup_{x \le y < \infty} \frac{u(y)^p}{V^2(y)}\bigg]^{q / p}  \int_0^x w(t)\,dt + \int_x^{\infty}  \bigg[ \sup_{t \le y < \infty} \frac{u(y)^p}{V^2(y)}\bigg]^{q / p} w(t)\,dt\bigg)^{1 / q} V^{1 / p}(x) < \infty
\end{equation}
and
\begin{equation}\label{RT.SO.thm.2.eq.13}
\bigg( \int_0^{\infty} \bigg( \sup_{t \le \tau < \infty} u(\tau)^p\bigg)^{q/p}  w(t)\,dt \bigg)^{1 / q} \ls \bigg( \int_0^{\infty} v(\tau)\,d\tau\bigg)^{1 / p}
\end{equation}
hold.

On the other hand, by Theorem \ref{Tub.thm.1}, \eqref{RT.SO.thm.2.eq.2} holds iff inequalities
\begin{align}
    \sup_{x > 0} \bigg(\bigg[ \frac{u(x)}{B(x)}\bigg]^q \int_0^x  w(t)\,dt + \int_x^{\infty} \bigg[ \frac{u(t)}{B(t)}\bigg]^q w(t)\,dt \bigg)^{1 / q}\sup_{0 < \tau \le x}\frac{B(\tau)}{V^{ 1 / p}(\tau)} & < \infty \label{RT.SO.thm.2.eq.14} \\
    \sup_{x > 0} \bigg(\bigg[\sup_{x \le \tau < \infty} \frac{u(\tau)^p}{V^2(\tau)}\bigg]^{q / p} \int_0^x  w(t)\,dt + \int_x^{\infty} \bigg[ \sup_{t \le \tau < \infty}\frac{u(\tau)^p}{V^2(\tau)}\bigg]^{q / p} w(t)\,dt \bigg)^{1 / q} V^{ 1 / p}(x) & < \infty \label{RT.SO.thm.2.eq.14.1}
\end{align}
hold.

We will thus be done if we can show that  \eqref{RT.SO.thm.2.eq.9} together with \eqref{RT.SO.thm.2.eq.12} imply \eqref{RT.SO.thm.2.eq.14}. The latter can be proved as follows:

Since
$$
\sup_{x > 0} \bigg(\int_x^{\infty} \bigg[ \frac{u(t)}{B(t)}\bigg]^q w(t)\,dt \bigg)^{1 / q}\sup_{0 < \tau \le x}\frac{B(\tau)}{V^{ 1 / p}(\tau)} = \sup_{x > 0} \bigg(\int_x^{\infty} \bigg[ \frac{u(t)}{B(t)}\bigg]^q w(t)\,dt \bigg)^{1 / q}\frac{B(x)}{V^{ 1 / p}(x)},
$$
it remains to show that
\begin{align*}
\sup_{x > 0} \frac{u(x)}{B(x)} \bigg(\int_0^x  w(t)\,dt \bigg)^{1 / q}\sup_{0 < \tau \le x}\frac{B(\tau)}{V^{ 1 / p}(\tau)} & \\
& \hspace{-3cm} \ls \sup_{x > 0} \frac{u(x)}{V^{1 / p}(x)} \bigg(\int_0^x  w(t)\,dt \bigg)^{1 / q} + \sup_{x > 0} \frac{B(x)}{V^{1 / p}(x)} \bigg(\int_x^{\infty} \bigg[ \frac{u(t)}{B(t)}\bigg]^q w(t)\,dt \bigg)^{1 / q}.
\end{align*}
Interchanging the suprema, using the monotonicity of $u / B$, we get that
\begin{align*}
\sup_{x > 0} \frac{u(x)}{B(x)} \bigg(\int_0^x  w(t)\,dt \bigg)^{1 / q}\sup_{0 < \tau \le x}\frac{B(\tau)}{V^{ 1 / p}(\tau)} & \\
& \hspace{-4cm} = \sup_{\tau > 0} \frac{B(\tau)}{V^{ 1 / p}(\tau)}\sup_{\tau \le x < \infty} \frac{u(x)}{B(x)} \bigg(\int_0^x  w(t)\,dt \bigg)^{1 / q} \\
& \hspace{-4cm} \ls \sup_{\tau > 0} \frac{B(\tau)}{V^{ 1 / p}(\tau)}\bigg(\sup_{\tau \le x < \infty} \frac{u(x)}{B(x)} \bigg)\bigg(\int_0^{\tau}  w(t)\,dt \bigg)^{1 / q} + \sup_{\tau > 0} \frac{B(\tau)}{V^{ 1 / p}(\tau)}\sup_{\tau \le x < \infty} \frac{u(x)}{B(x)} \bigg(\int_{\tau}^x  w(t)\,dt \bigg)^{1 / q} \\
& \hspace{-4cm} \ls \sup_{\tau > 0} \frac{u(\tau)}{V^{ 1 / p}(\tau)}\bigg(\int_0^{\tau}  w(t)\,dt \bigg)^{1 / q} + \sup_{\tau > 0} \frac{B(\tau)}{V^{ 1 / p}(\tau)}\sup_{\tau \le x < \infty} \bigg(\int_{\tau}^x \bigg[ \frac{u(t)}{B(t)}\bigg]^q w(t)\,dt \bigg)^{1 / q} \\
& \hspace{-4cm} = \sup_{\tau > 0} \frac{u(\tau)}{V^{ 1 / p}(\tau)}\bigg(\int_0^{\tau}  w(t)\,dt \bigg)^{1 / q} + \sup_{\tau > 0} \frac{B(\tau)}{V^{ 1 / p}(\tau)} \bigg(\int_{\tau}^{\infty} \bigg[ \frac{u(t)}{B(t)}\bigg]^q w(t)\,dt \bigg)^{1 / q}.
\end{align*}

{\rm (ii)} Let $q < p$. By Theorem \ref{supr.thm.11}, \eqref{RT.SO.thm.2.eq.4.1} holds iff both
    \begin{align*}
    \int_0^{\infty} \bigg(\int_0^x  u(t)^q  w(t)\,dt\bigg)^{r / p} u(x)^q V^{- r / p}(x) w(x)\,dx & < \infty, \\
    \int_0^{\infty} \bigg(\int_x^{\infty}  \bigg(\frac{u(t)}{B(t)}\bigg)^q w(t)\,dt\bigg)^{r / p} \bigg[\sup_{0 < \tau \le x} \frac{B(\tau)}{V^{1 / p}(\tau)}\bigg]^{r}w(x) \bigg(\frac{u(x)}{B(x)}\bigg)^q\,dx & < \infty.
    \end{align*}

By Theorem \ref{supr.thm.33}, \eqref{RT.SO.thm.2.eq.5.1} holds iff inequalities
    \begin{align*}
    \int_0^{\infty} \bigg(\int_x^{\infty}  \bigg[ \sup_{t \le y < \infty} \frac{u(y)^p}{V^2(y)}\bigg]^{q / p}  w(t)\,dt\bigg)^{r / p} \bigg[ \sup_{x \le y < \infty} \frac{u(y)^p}{V^2(y)}\bigg]^{q / p} V^{r / p}(x) w(x)\,dx & < \infty, \\
    \int_0^{\infty} \bigg( \int_0^x w(t)\,dt \bigg)^{r / p} \bigg(\sup_{x \le \tau < \infty} \bigg[ \sup_{\tau \le y < \infty} \frac{u(y)^p}{V^2(y)}\bigg] V(\tau) \bigg)^{r / p}  w(x)\,dx & < \infty.
    \end{align*}

On the other hand, by Theorem \ref{Tub.thm.1}, \eqref{RT.SO.thm.2.eq.2} holds iff
\begin{align*}
\int_0^{\infty} \bigg( \int_x^{\infty} \bigg[\frac{u(t)}{B(t)}\bigg]^q w(t)\,dt\bigg)^{r / p} \bigg[\sup_{0 < t \le \tau} \frac{B(t)^p}{V(t)}\bigg]^{r / p}w(x) \bigg[\frac{u(x)}{B(x)}\bigg]^q\,dx & < \infty, \\
\int_0^{\infty} \bigg( \int_0^x w(t)\,dt \bigg)^{r / p}\bigg(\sup_{x \le \tau < \infty} \bigg[\frac{u(\tau)}{B(\tau)}\bigg]^p \bigg[\sup_{0 < t \le \tau} \frac{B(t)^p}{V(t)} \bigg]\bigg)^{r / p}w(x)\,dx & < \infty,\\
\int_0^{\infty} \bigg( \int_x^{\infty}\bigg[\sup_{t \le y < \infty} \frac{u(y)^p}{V^2(y)}\bigg]^{q / p} w(t)\,dt\bigg)^{r / p}
\bigg[\sup_{x \le y < \infty} \frac{u(y)^p}{V^2(y)}\bigg]^{q / p} V^{r / p}(x)w(x)\,dx  & < \infty, \\
\int_0^{\infty} \bigg( \int_0^x w(t)\,dt\bigg)^{r / p} \bigg(  \sup_{x \le \tau < \infty}\bigg[\sup_{\tau \le y < \infty} \frac{u(y)^p}{V^2(y)}\bigg] V(\tau)\bigg)^{r / p} w(x)\,dx & \\
= \int_0^{\infty} \bigg( \int_0^x w(t)\,dt\bigg)^{r / p} \bigg(  \sup_{x \le \tau < \infty} \frac{u(\tau)^p}{V(\tau)}\bigg)^{r / p} w(x)\,dx & < \infty.
\end{align*}
Obviously, it remains to show that
\begin{align*}
\int_0^{\infty} \bigg( \int_0^x w(t)\,dt \bigg)^{r / p}\bigg(\sup_{x \le \tau < \infty} \bigg[\frac{u(\tau)}{B(\tau)}\bigg]^p \bigg[\sup_{0 < t \le \tau} \frac{B(t)^p}{V(t)} \bigg]\bigg)^{r / p}w(x)\,dx & \\
& \hspace{-5.5cm} \ls
\int_0^{\infty} \bigg(\int_x^{\infty}  \bigg(\frac{u(t)}{B(t)}\bigg)^q w(t)\,dt\bigg)^{r / p} \bigg[\sup_{0 < \tau \le x} \frac{B(\tau)}{V^{1 / p}(\tau)}\bigg]^{r}w(x) \bigg(\frac{u(x)}{B(x)}\bigg)^q\,dx
\\
& \hspace{-5cm} +
\int_0^{\infty} \bigg( \int_0^x w(t)\,dt\bigg)^{r / p} \bigg(  \sup_{x \le \tau < \infty} \frac{u(\tau)^p}{V(\tau)}\bigg)^{r / p} w(x)\,dx .
\end{align*}
We will prove the assertion only in the case  when $\int_0^{\infty} w(\tau)\,d\tau = \infty$. Let $\{x_k\}$ be such that $\int_0^{x_k} w(\tau)\,d\tau = 2^k$. Then
\begin{align*}
\int_0^{\infty} \bigg( \int_0^x w(t)\,dt \bigg)^{r / p}\bigg(\sup_{x \le \tau < \infty} \bigg[\frac{u(\tau)}{B(\tau)}\bigg]^p \bigg[\sup_{0 < t \le \tau} \frac{B(t)^p}{V(t)} \bigg]\bigg)^{r / p}w(x)\,dx & \\
& \hspace{-5cm} \ap \sum_{k \in \Z} 2^{k r / q} \bigg(\sup_{x_k \le \tau < \infty} \bigg[\frac{u(\tau)}{B(\tau)}\bigg]^p \bigg[\sup_{0 < t \le \tau} \frac{B(t)^p}{V(t)} \bigg]\bigg)^{r / p}.
\end{align*}
Note that
\begin{align*}
\sup_{x_k \le \tau < \infty} \bigg[\frac{u(\tau)}{B(\tau)}\bigg]^p \bigg[\sup_{0 < t \le \tau} \frac{B(t)^p}{V(t)} \bigg] & \ap \sup_{x_k \le \tau < \infty} \bigg[\frac{u(\tau)}{B(\tau)}\bigg]^p \bigg[\sup_{0 < t \le x_k} \frac{B(t)^p}{V(t)} \bigg] +
\sup_{x_k \le \tau < \infty} \bigg[\frac{u(\tau)}{B(\tau)}\bigg]^p \bigg[\sup_{x_k \le t \le \tau} \frac{B(t)^p}{V(t)} \bigg] \\
& =  \bigg[\frac{u(x_k)}{B(x_k)}\bigg]^p \bigg[\sup_{0 < t \le x_k} \frac{B(t)^p}{V(t)} \bigg] +
\sup_{x_k \le t < \infty} \frac{B(t)^p}{V(t)}  \sup_{t \le \tau < \infty} \bigg[\frac{u(\tau)}{B(\tau)}\bigg]^p  \\
& =  \bigg[\frac{u(x_k)}{B(x_k)}\bigg]^p \bigg[\sup_{0 < t \le x_k} \frac{B(t)^p}{V(t)} \bigg] +
\sup_{x_k \le t < \infty} \frac{u(t)^p}{V(t)} \\
& \ap \bigg[\frac{u(x_k)}{B(x_k)}\bigg]^p \bigg[\sup_{0 < t \le x_{k-1}} \frac{B(t)^p}{V(t)} \bigg] + \bigg[\frac{u(x_k)}{B(x_k)}\bigg]^p \bigg[\sup_{x_{k-1} < t \le x_k} \frac{B(t)^p}{V(t)} \bigg] +
\sup_{x_k \le t < \infty} \frac{u(t)^p}{V(t)}
\\
& \le  \bigg[\frac{u(x_k)}{B(x_k)}\bigg]^p \bigg[\sup_{0 < t \le x_{k-1}} \frac{B(t)^p}{V(t)} \bigg] +
\sup_{x_{k-1} \le t < x_k} \frac{u(t)^p}{V(t)} +
\sup_{x_k \le t < \infty} \frac{u(t)^p}{V(t)} \\
& \le  \bigg[\frac{u(x_k)}{B(x_k)}\bigg]^p \bigg[\sup_{0 < t \le x_{k-1}} \frac{B(t)^p}{V(t)} \bigg] +
\sup_{x_{k-1} \le t < \infty} \frac{u(t)^p}{V(t)}.
\end{align*}
Hence,
\begin{align*}
\int_0^{\infty} \bigg( \int_0^x w(t)\,dt \bigg)^{r / p}\bigg(\sup_{x \le \tau < \infty} \bigg[\frac{u(\tau)}{B(\tau)}\bigg]^p \bigg[\sup_{0 < t \le \tau} \frac{B(t)^p}{V(t)} \bigg]\bigg)^{r / p}w(x)\,dx & \\
& \hspace{-7cm} \ls \sum_{k \in \Z} 2^{k r / q} \bigg(\bigg[\frac{u(x_k)}{B(x_k)}\bigg]^p \bigg[\sup_{0 < t \le x_{k-1}} \frac{B(t)^p}{V(t)} \bigg]\bigg)^{r / p} + \sum_{k \in \Z} 2^{k r / q} \bigg(\sup_{x_{k-1} \le t < \infty} \frac{u(t)^p}{V(t)}\bigg)^{r / p} \\
& \hspace{-7cm} \ap \sum_{k \in \Z} \bigg(\int_{x_{k-1}}^{x_k} \bigg(\int_x^{x_k} w\bigg)^{r / p}w(x)\,dx\bigg) \bigg(\bigg[\frac{u(x_k)}{B(x_k)}\bigg]^p \bigg[\sup_{0 < t \le x_{k-1}} \frac{B(t)^p}{V(t)} \bigg]\bigg)^{r / p} \\
&  \hspace{-6.5cm} + \sum_{k \in \Z} \bigg(\int_{x_{k-2}}^{x_{k-1}} \bigg(\int_{x_k}^x w\bigg)^{r / p}w(x)\,dx\bigg) \bigg(\sup_{x_{k-1} \le t < \infty} \frac{u(t)^p}{V(t)}\bigg)^{r / p} \\
& \hspace{-7cm} \ls \sum_{k \in \Z} \int_{x_{k-1}}^{x_k} \bigg(\int_x^{\infty} \bigg(\frac{u(t)}{B(t)}\bigg)^q w(t)\,dt\bigg)^{r / p}\bigg[\sup_{0 < \tau \le x} \frac{B(\tau)}{V^{1 / p}(\tau)}\bigg]^{r}\bigg[\frac{u(x)}{B(x)}\bigg]^q w(x)\,dx \\
&  \hspace{-6.5cm} + \sum_{k \in \Z} \int_{x_{k-1}}^{x_{k+1}} \bigg( \int_0^x w(t)\,dt\bigg)^{r / p} \bigg(  \sup_{x \le \tau < \infty} \frac{u(\tau)^p}{V(\tau)}\bigg)^{r / p} w(x)\,dx \\
& \hspace{-7cm} \ls
\int_0^{\infty} \bigg(\int_x^{\infty}  \bigg(\frac{u(t)}{B(t)}\bigg)^q w(t)\,dt\bigg)^{r / p} \bigg[\sup_{0 < \tau \le x} \frac{B(\tau)}{V^{1 / p}(\tau)}\bigg]^{r}w(x) \bigg(\frac{u(x)}{B(x)}\bigg)^q\,dx \\
& \hspace{-6.5cm} + \int_0^{\infty} \bigg( \int_0^x w(t)\,dt\bigg)^{r / p} \bigg(  \sup_{x \le \tau < \infty} \frac{u(\tau)^p}{V(\tau)}\bigg)^{r / p} w(x)\,dx.
\end{align*}
\end{proof}

\begin{remark}
Note that Theorem \ref{RT.SO.thm.2}, namely the fact that $\eqref{RT.SO.thm.2.eq.2} \Leftrightarrow \eqref{RT.SO.thm.2.eq.1} \Leftrightarrow \eqref{RT.SO.thm.2.eq.3}$, when $b \equiv 1$, was proved in \cite{gogpick2007}.
\end{remark}

As a corollary we obtain that for all the three operators mentioned in \eqref{RT.SO.eq.1.1}, the corresponding weighted inequalities are equivalent. It is worth noticing that this is not so when $p > 1$.
\begin{corollary}\label{RT.SO.thm.1}
    Assume that $0 < p \le 1$, $0 < q < \infty$, and  $v,\,w \in \W\I$. Let $b$ be a weight on $\I$
    such that $0 < B(t) < \infty$ for every $t \in \I$. Then the following three statements are equivalent:

    \begin{align}
    \bigg( \int_0^{\infty} \bigg( \int_0^t f(\tau) b(\tau) \,d\tau\bigg)^q w(t)\,dt\bigg)^{1 / q} & \ls \bigg( \int_0^{\infty} f(\tau)^p v(\tau)\,d\tau\bigg)^{1 / p}, ~ f \in \M^{\dn}; \label{RT.SO.eq.1.2} \\
    \bigg( \int_0^{\infty} \bigg( \int_0^t f(\tau)^p B(\tau)^{p-1}b(\tau) \,d\tau\bigg)^{q / p} w(t)\,dt\bigg)^{1 / q} & \ls \bigg( \int_0^{\infty} f(\tau)^p v(\tau)\,d\tau\bigg)^{1 / p}, ~ f \in \M^{\dn}; \label{RT.SO.eq.1.3} \\
    \bigg( \int_0^{\infty} \bigg( \sup_{0 < \tau \le t} f(\tau) B(\tau) \bigg)^q w(t)\,dt\bigg)^{1 / q} & \ls \bigg( \int_0^{\infty} f(\tau)^p v(\tau)\,d\tau\bigg)^{1 / p}, ~ f \in \M^{\dn}. \label{RT.SO.eq.1.4}
    \end{align}
\end{corollary}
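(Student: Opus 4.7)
I begin with the straightforward implications. By the pointwise inequalities \eqref{RT.SO.eq.1.1}, for every $f \in \M^{\dn}$ and every $t \in \I$,
$$
\sup_{0 < \tau \le t} f(\tau) B(\tau) \;\le\; \int_0^t f(y) b(y)\,dy \;\lesssim\; \bigg( \int_0^t f(y)^p B(y)^{p-1} b(y)\,dy \bigg)^{1/p}.
$$
Taking the weighted $L^q(w,\I)$ quasi-norm in $t$ preserves these inequalities, yielding $\eqref{RT.SO.eq.1.3}\Rightarrow\eqref{RT.SO.eq.1.2}\Rightarrow\eqref{RT.SO.eq.1.4}$ at once. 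Only the reverse implication $\eqref{RT.SO.eq.1.4}\Rightarrow\eqref{RT.SO.eq.1.3}$ requires work, and the plan is to handle it by following the same scheme used in the proof of Theorem~\ref{RT.SO.thm.2}.

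After the substitution $g = f^p$ (note that $g \in \M^{\dn}$ whenever $f \in \M^{\dn}$, since $p > 0$), the two inequalities \eqref{RT.SO.eq.1.4} and \eqref{RT.SO.eq.1.3} become
$$
\| S_{B^p}(g) \|_{q/p,w,\I} \;\lesssim\; \|g\|_{1,v,\I} \qquad \text{and} \qquad \| H_{B^{p-1}b}(g) \|_{q/p,w,\I} \;\lesssim\; \|g\|_{1,v,\I}, \qquad g \in \M^{\dn},
$$
respectively. Theorem~\ref{supr.thm.11}, applied with exponents $(1, q/p)$ and weight $B^p$, provides the explicit necessary and sufficient condition for the former; since $B$ is non-decreasing, $\overline{B^p}=B^p$ and the characterization is clean. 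For the latter, Theorem~\ref{Reduction.Theorem.thm.3.1} with $T = H_{B^{p-1}b}$ reduces the Hardy-on-cone inequality to an equivalent Hardy-type inequality on $\M^+$, whose characterization is standard.

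The core technical step, and the main obstacle, is to verify that the sup-side characterizing condition dominates, up to a multiplicative constant, the Hardy-side characterizing condition. In the easier regime $p \le q$ both conditions consist of a single supremum over $x \in \I$, and the domination follows from an interchange-of-suprema argument using the monotonicity of $B$, entirely parallel to the first half of case~(i) in the proof of Theorem~\ref{RT.SO.thm.2}. In the harder regime $q < p$ the conditions take the form of integral expressions of type $\int_0^\infty(\cdots)^{r/p}w$, and the comparison must be performed by a dyadic decomposition: one selects levels $\{x_k\}_{k\in\Z}$ with $W(x_k)=2^k$, estimates the integrand of the Hardy-side condition on each shell $[x_{k-1},x_k]$ by a controlled sum of contributions from the sup-side integrand on adjacent shells, and then reassembles the sums as integrals over $\I$. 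This dyadic reorganization exactly mirrors the final computation in the proof of Theorem~\ref{RT.SO.thm.2} and is the main technical point of the argument.
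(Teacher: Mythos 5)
You correctly record the easy implications $\eqref{RT.SO.eq.1.3}\Rightarrow\eqref{RT.SO.eq.1.2}\Rightarrow\eqref{RT.SO.eq.1.4}$ from \eqref{RT.SO.eq.1.1}, and the reformulations of \eqref{RT.SO.eq.1.4} and \eqref{RT.SO.eq.1.3} after the substitution $g=f^p$ are right. However, the proposal has two problems.

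First, you missed that the corollary is a direct specialization of Theorem~\ref{RT.SO.thm.2}, which is precisely how the paper proves it: take $u\equiv 1$. For $f\in\M^{\dn}$, each of $B(\tau)^{-1}\int_0^{\tau}fb$, $B(\tau)^{-p}\int_0^{\tau}fB^{p-1}b$ and $B(\tau)^{-1}\sup_{0<y\le\tau}f(y)B(y)$ is non-increasing in $\tau$, so in each of \eqref{RT.SO.thm.2.eq.1}--\eqref{RT.SO.thm.2.eq.3} the outer supremum $\sup_{t\le\tau<\infty}$ collapses to the value at $\tau=t$. Absorbing the resulting factor $B(t)^{-q}$ into the (arbitrary) weight $w$, and applying the bijection $f\mapsto f^p$ of $\M^{\dn}$ to \eqref{RT.SO.thm.2.eq.2}, turns the three statements of Theorem~\ref{RT.SO.thm.2} exactly into \eqref{RT.SO.eq.1.2}--\eqref{RT.SO.eq.1.4}; no further argument is required.

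Second, what you propose instead is in effect a re-run of the proof of Theorem~\ref{RT.SO.thm.2} with $u\equiv 1$, and it is not actually carried out. The decisive step --- showing that the characterizing conditions obtained from Theorem~\ref{supr.thm.11} for $\| S_{B^p}(g)\|_{q/p,w,\I}\ls\|g\|_{1,v,\I}$ dominate those obtained for $\| H_{B^{p-1}b}(g)\|_{q/p,w,\I}\ls\|g\|_{1,v,\I}$ via Theorem~\ref{Reduction.Theorem.thm.3.1} --- is only sketched: you name the dyadic levels $W(x_k)=2^k$ and describe a shell-by-shell estimation, but you do not write out the Hardy-side conditions, do not produce the actual estimates, and do not separate the regimes $p\le q$ and $q<p$. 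As written this is a plan, not a proof, and it duplicates work already done in establishing Theorem~\ref{RT.SO.thm.2}.
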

This fact was proved in \cite[Theorem 2.1]{gogpick2007}, when $b \equiv 1$. Recently, in \cite[Theorem 3.9]{GogStep}, it was proved that $\eqref{RT.SO.eq.1.2} \Leftrightarrow \eqref{RT.SO.eq.1.4}$ for more general Volterra operators with continuous Oinarov kernels in the case when $0 < q < p \le 1$.

\begin{proof}
The proof immediately follows from Theorem \ref{RT.SO.thm.2} taking $u \equiv 1$.
\end{proof}

By the way we have proved the following statement.
\begin{theorem}\label{RT.SO.thm.3}
    Let $0 < p \le 1$, $0 < q < \infty$. Assume that $u \in \W\I \cap C\I$ and $b,\,v,\,w \in \W\I$ be such that $0 < V(t) < \infty$ and  $0 < B(t) < \infty$ for all $x > 0$.
    Then inequality \eqref{Tub.thm.1.eq.1}
    is satisfied with the best constant $c$ if and only if:

    {\rm (i)} $p \le q$, and in this case
    $c \ap A_1 + A_2$, where
    \begin{align*}
    A_1: & = \sup_{x > 0} \bigg( \bigg[\sup_{x \le \tau < \infty} \frac{u(\tau)}{B(\tau)}\bigg]^q \int_0^x w(t)\,dt + \int_x^{\infty} \bigg[\sup_{t \le \tau < \infty} \frac{u(\tau)}{B(\tau)}\bigg]^q w(t)\,dt\bigg)^{1 / q} \sup_{0 < y \le x} \frac{B(y)}{V^{1 / p}(y)}; \\
    A_2: & = \sup_{x > 0}\bigg( \bigg[ \sup_{x \le y < \infty} \frac{u(y)^p}{V^2(y)}\bigg]^{q / p}  \int_0^x w(t)\,dt + \int_x^{\infty}  \bigg[ \sup_{t \le y < \infty} \frac{u(y)^p}{V^2(y)}\bigg]^{q / p} w(t)\,dt\bigg)^{1 / q} V^{1 / p}(x);
    \end{align*}

    {\rm (ii)} $q < p$,  and in this case $c \ap B_1 + B_2 + B_3 + B_4$, where
    \begin{align*}
    B_1: & = \bigg(\int_0^{\infty} \bigg(\int_0^x w(t)\,dt\bigg)^{r / p} \bigg[\sup_{x \le \tau < \infty} \bigg[\sup_{\tau \le y < \infty} \frac{u(y)}{B(y)}\bigg]^q  \bigg( \sup_{0 < y \le x} \frac{B(y)}{V^{1 / p}(y)} \bigg)\bigg]^r w(x)\,dx \bigg)^{1 / r}, \\
    B_2: & = \bigg(\int_0^{\infty} \bigg(\int_x^{\infty} \bigg[\sup_{t \le \tau < \infty} \frac{u(\tau)}{B(\tau)}\bigg]^q w(t)\,dt\bigg)^{r / p} \bigg[\sup_{0 < \tau \le x} \frac{B(\tau)}{V^{1 / p}(\tau)}\bigg]^{r}\bigg[\sup_{x \le \tau < \infty} \frac{u(\tau)}{B(\tau)}\bigg]^q w(x)\,dx \bigg)^{1 / r}, \\
    B_3: & = \bigg( \int_0^{\infty} \bigg( \int_0^x w(t)\,dt\bigg)^{r / p} \bigg(  \sup_{x \le \tau < \infty}\bigg[\sup_{\tau \le y < \infty} \frac{u(y)^p}{V^2(y)}\bigg] V(\tau)\bigg)^{r / p} w(x)\,dx \bigg)^{1 / r}, \\
    B_4: & = \bigg(\int_0^{\infty} \bigg( \int_x^{\infty}\bigg[\sup_{t \le y < \infty} \frac{u(y)^p}{V^2(y)}\bigg]^{q / p} w(t)\,dt\bigg)^{r / p}
    \bigg[\sup_{x \le y < \infty} \frac{u(y)^p}{V^2(y)}\bigg]^{q / p} V^{r / p}(x)w(x)\,dx \bigg)^{1 / r}.
    \end{align*}
\end{theorem}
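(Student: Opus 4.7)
The plan is to chain two reductions and then invoke the already-proved characterization of Theorem \ref{Tub.thm.1}. First, by Theorem \ref{RT.SO.thm.2} (specifically $\eqref{RT.SO.thm.2.eq.1} \Leftrightarrow \eqref{RT.SO.thm.2.eq.2}$) the inequality \eqref{Tub.thm.1.eq.1} is equivalent to
\begin{equation*}
\bigg( \int_0^{\infty} \bigg( \sup_{t \le \tau < \infty} \bigg(\frac{u(\tau)}{B(\tau)}\bigg)^p \int_0^{\tau} f(y) B(y)^{p-1}b(y)\,dy\bigg)^{q / p} w(t)\,dt\bigg)^{1 / q} \ls \bigg( \int_0^{\infty} f(\tau) v(\tau)\,d\tau\bigg)^{1 / p}, \quad f \in \M^{\dn}.
\end{equation*}
So the first step of the proof is simply to replace the original inequality by this reformulation.

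The second, key step is the observation that this reformulated inequality is itself of the form \eqref{Tub.thm.1.eq.1} under the substitution $p \mapsto 1$, $q \mapsto q/p$, $u \mapsto u^p$, $b \mapsto B^{p-1}b$, with $v$ and $w$ unchanged. Under this substitution the new primitive weight is $\widetilde B(\tau) = \int_0^\tau B^{p-1}b = B^p(\tau)/p \ap B^p(\tau)$, so $u^p / \widetilde B \ap (u/B)^p$. Hence one may apply Theorem \ref{Tub.thm.1} directly in its $p=1$ regime: case (ii) applies when $q \ge p$ and case (iv) applies when $q < p$. Substituting $\widetilde B \ap B^p$ back and taking $1/p$-th powers of the resulting characterizing quantities reproduces exactly $A_1, A_2$ in part (i) and $B_1, B_2, B_3, B_4$ in part (ii) of the statement.

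The only technical preliminary is that the monotonicity of $\tau \mapsto u(\tau)/B(\tau)$ has to be assumed in order for the tail suprema $\sup_{t \le \tau < \infty} u(\tau)/B(\tau)$ to collapse to $u(t)/B(t)$ cleanly in the Theorem \ref{Tub.thm.1} substitution. This is, however, automatic: as noted in the paragraph just before Theorem \ref{RT.SO.thm.2}, $T_{u,b}$ coincides with $T_{\widetilde u, b}$ where $\widetilde u(\tau) := B(\tau)\sup_{\tau\le x<\infty}u(x)/B(x)$, so one may replace $u$ by $\widetilde u$ at the outset. With that in place, the main obstacle is purely combinatorial---the tedium of tracking four expressions in case (iv)---but no new inequality or estimate is required; the theorem is, as the remark immediately preceding it asserts, a by-product of the earlier analysis.
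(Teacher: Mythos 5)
Your proposal is correct and matches the paper's approach: the paper obtains Theorem \ref{RT.SO.thm.3} as a by-product of the proof of Theorem \ref{RT.SO.thm.2}, where it applies Theorem \ref{Tub.thm.1} (cases (ii) and (iv), the $p=1$ regime) directly to the reformulated inequality \eqref{RT.SO.thm.2.eq.2}, i.e.\ precisely to $T_{u^p,\,B^{p-1}b}$ with exponents $1$ and $q/p$. Your explicit substitution $p\mapsto 1$, $q\mapsto q/p$, $u\mapsto u^p$, $b\mapsto B^{p-1}b$, with $\widetilde B\approx B^p$, together with the reduction to non-increasing $u/B$, spells out exactly what the paper's terse ``by the way'' remark leaves implicit.
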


\begin{bibdiv}
    \begin{biblist}

\bib{BGGM1}{article}{
    author={Burenkov, V. I.},
    author={Gogatishvili, A.},
    author={Guliyev, V. S.},
    author={Mustafayev, R. Ch.},
    title={Boundedness of the fractional maximal operator in local
        Morrey-type spaces},
    journal={Complex Var. Elliptic Equ.},
    volume={55},
    date={2010},
    number={8-10},
    pages={739--758},
    issn={1747-6933},
    review={\MR{2674862 (2011f:42015)}},
}

\bib{BGGM2}{article}{
    author={Burenkov, V.I.},
    author={Gogatishvili, A.},
    author={Guliyev, V.S.},
    author={Mustafayev, R.Ch.},
    title={Boundedness of the Riesz potential in local Morrey-type spaces},
    journal={Potential Anal.},
    volume={35},
    date={2011},
    number={1},
    pages={67--87},
    issn={0926-2601},
    review={\MR{2804553 (2012d:42027)}},
}

\bib{BO}{article}{
    author={Burenkov, V.I.},
    author={Oinarov, R.},
    title={Necessary and sufficient conditions for boundedness of the
        Hardy-type operator from a weighted Lebesgue space to a Morrey-type
        space},
    journal={Math. Inequal. Appl.},
    volume={16},
    date={2013},
    number={1},
    pages={1--19},
    issn={1331-4343},
    review={\MR{3060376}},
}

\bib{cpss}{article}{
    author={Carro, M.},
    author={Pick, L.},
    author={Soria, J.},
    author={Stepanov, V. D.},
    title={On embeddings between classical Lorentz spaces},
    journal={Math. Inequal. Appl.},
    volume={4},
    date={2001},
    number={3},
    pages={397--428},
    issn={1331-4343},
    review={\MR{1841071 (2002d:46026)}},
    doi={10.7153/mia-04-37},
}

 \bib{carsor1993}{article}{
    author={Carro, M. J.},
    author={Soria, J.},
    title={Boundedness of some integral operators},
    journal={Canad. J. Math.},
    volume={45},
    date={1993},
    number={6},
    pages={1155--1166},
    issn={0008-414X},
    review={\MR{1247539 (95d:47064)}},
    doi={10.4153/CJM-1993-064-2},
 }


\bib{ckop}{article}{
    author={Cianchi, A.},
    author={Kerman, R.},
    author={Opic, B.},
    author={Pick, L.},
    title={A sharp rearrangement inequality for the fractional maximal
        operator},
    journal={Studia Math.},
    volume={138},
    date={2000},
    number={3},
    pages={277--284},
    issn={0039-3223},
    review={\MR{1758860 (2001h:42029)}},
}

\bib{cwikpys}{article}{
    author={Cwikel, M.},
    author={Pustylnik, E.},
    title={Weak type interpolation near ``endpoint'' spaces},
    journal={J. Funct. Anal.},
    volume={171},
    date={2000},
    number={2},
    pages={235--277},
    issn={0022-1236},
    review={\MR{1745635 (2001b:46118)}},
    doi={10.1006/jfan.1999.3502},
}

\bib{dok}{article}{
    author={Doktorskii, R. Ya.},
    title={Reiterative relations of the real interpolation method},
    language={Russian},
    journal={Dokl. Akad. Nauk SSSR},
    volume={321},
    date={1991},
    number={2},
    pages={241--245},
    issn={0002-3264},
    translation={
        journal={Soviet Math. Dokl.},
        volume={44},
        date={1992},
        number={3},
        pages={665--669},
        issn={0197-6788},
    },
    review={\MR{1153547 (93b:46143)}},
}

\bib{edop}{article}{
    author={Edmunds, D. E.},
    author={Opic, B.},
    title={Boundedness of fractional maximal operators between classical and
        weak-type Lorentz spaces},
    journal={Dissertationes Math. (Rozprawy Mat.)},
    volume={410},
    date={2002},
    pages={50},
    issn={0012-3862},
    review={\MR{1952673 (2004c:42040)}},
    doi={10.4064/dm410-0-1},
}

\bib{edop2008}{article}{
	author={Edmunds, D. E.},
	author={Opic, B.},
	title={Alternative characterisations of Lorentz-Karamata spaces},
	journal={Czechoslovak Math. J.},
	volume={58(133)},
	date={2008},
	number={2},
	pages={517--540},
	issn={0011-4642},
	review={\MR{2411107 (2009c:46044)}},
	doi={10.1007/s10587-008-0033-8},
}

\bib{evop}{article}{
    author={Evans, W. D.},
    author={Opic, B.},
    title={Real interpolation with logarithmic functors and reiteration},
    journal={Canad. J. Math.},
    volume={52},
    date={2000},
    number={5},
    pages={920--960},
    issn={0008-414X},
    review={\MR{1782334 (2001i:46115)}},
    doi={10.4153/CJM-2000-039-2},
}

\bib{GogMusPers2}{article}{
            author={Gogatishvili, A.},
            author={Mustafayev, R. Ch.},
            author={Persson, L.-E.},
            title={Some new iterated Hardy-type inequalities: the case $\theta = 1$},
            journal={J. Inequal. Appl.},
            date={2013},
            pages={29 pp.},
            issn={},
            doi={10.1186/1029-242X-2013-515},
        }

\bib{GogMusIHI}{article}{
    author={Gogatishvili, A.},
    author={Mustafayev, R. Ch.},
    title={Weighted iterated Hardy-type inequalities},
    journal={Preprint},
    date={2015},
    pages={},
    issn={},
    doi={},
}

\bib{gop}{article}{
    author={Gogatishvili, A.},
    author={Opic, B.},
    author={Pick, L.},
    title={Weighted inequalities for Hardy-type operators involving suprema},
    journal={Collect. Math.},
    volume={57},
    date={2006},
    number={3},
    pages={227--255},
    issn={0010-0757},
    review={\MR{2264321 (2007g:26019)}},
}

\bib{gogpick2007}{article}{
    author={Gogatishvili, A.},
    author={Pick, L.},
    title={A reduction theorem for supremum operators},
    journal={J. Comput. Appl. Math.},
    volume={208},
    date={2007},
    number={1},
    pages={270--279},
    issn={0377-0427},
    review={\MR{2347749 (2009a:26013)}},
    doi={10.1016/j.cam.2006.10.048},
}

\bib{GogStep}{article}{
    author={Gogatishvili, A.},
    author={Stepanov, V. D.},
    title={Reduction theorems for weighted integral inequalities on the cone
        of monotone functions},
    language={Russian, with Russian summary},
    journal={Uspekhi Mat. Nauk},
    volume={68},
    date={2013},
    number={4(412)},
    pages={3--68},
    issn={0042-1316},
    translation={
        journal={Russian Math. Surveys},
        volume={68},
        date={2013},
        number={4},
        pages={597--664},
        issn={0036-0279},
    },
    review={\MR{3154814}},
}

\bib{kerp}{article}{
    author={Kerman, R.},
    author={Pick, L.},
    title={Optimal Sobolev imbeddings},
    journal={Forum Math.},
    volume={18},
    date={2006},
    number={4},
    pages={535--570},
    issn={0933-7741},
    review={\MR{2254384 (2007g:46052)}},
    doi={10.1515/FORUM.2006.028},
}

\bib{o}{article}{
    author={Opic, B.},
    title={On boundedness of fractional maximal operators between classical
        Lorentz spaces},
    conference={
        title={Function spaces, differential operators and nonlinear analysis
        },
        address={Pudasj\"arvi},
        date={1999},
    },
    book={
        publisher={Acad. Sci. Czech Repub., Prague},
    },
    date={2000},
    pages={187--196},
    review={\MR{1755309 (2001g:42043)}},
}

\bib{pick2000}{article}{
    author={Pick, L.},
    title={Supremum operators and optimal Sobolev inequalities},
    conference={
        title={Function spaces, differential operators and nonlinear analysis
        },
        address={Pudasj\"arvi},
        date={1999},
    },
    book={
        publisher={Acad. Sci. Czech Repub., Prague},
    },
    date={2000},
    pages={207--219},
    review={\MR{1755311 (2000m:46075)}},
}

\bib{pick2002}{article}{
    author={Pick, L.},
    title={Optimal Sobolev embeddings---old and new},
    conference={
        title={Function spaces, interpolation theory and related topics (Lund,
            2000)},
    },
    book={
        publisher={de Gruyter, Berlin},
    },
    date={2002},
    pages={403--411},
    review={\MR{1943297 (2003j:46054)}},
}

\bib{pys}{article}{
    author={Pustylnik, E.},
    title={Optimal interpolation in spaces of Lorentz-Zygmund type},
    journal={J. Anal. Math.},
    volume={79},
    date={1999},
    pages={113--157},
    issn={0021-7670},
    review={\MR{1749309 (2001a:46028)}},
    doi={10.1007/BF02788238},
}

\bib{ss}{article}{
    author={Sinnamon, G.},
    author={Stepanov, V.D.},
    title={The weighted Hardy inequality: new proofs and the case $p=1$},
    journal={J. London Math. Soc. (2)},
    volume={54},
    date={1996},
    number={1},
    pages={89--101},
    issn={0024-6107},
    review={\MR{1395069 (97e:26021)}},
    doi={10.1112/jlms/54.1.89},
}

\end{biblist}
\end{bibdiv}

\end{document}